\DeclareMathOperator{\Span}{span}
\newtheorem{theorem}{Theorem}[section]
\newtheorem{lemma}[theorem]{Lemma}
\newtheorem{co}[theorem]{Corollary}
\newtheorem{prop}[theorem]{Proposition}
\theoremstyle{definition}
\newtheorem{definition}[theorem]{Definition}
\newtheorem{ass}[theorem]{Assumption}
\theoremstyle{remark}
\newtheorem{remark}[theorem]{Remark}
\numberwithin{equation}{section}
\begin{document}

\title[inf-sup stable quasi-reversibility for unique continuation]{On inf-sup stability and optimal convergence of the quasi-reversibility method for unique continuation subject to Poisson's equation}


\author{Erik Burman}
\address{Department of Mathematics, University College London, UK}
\curraddr{}
\email{e.burman@ucl.ac.uk}
\thanks{}
\author{Mingfei Lu}
\address{Department of Mathematics, University College London, UK}
\curraddr{}
\email{mingfei.lu.21@ucl.ac.uk}
\thanks{}


\subjclass[2020]{Primary }

\date{}

\dedicatory{}

\begin{abstract}
In this paper, we develop a framework for the discretization of a mixed formulation of quasi-reversibility solutions to ill-posed problems with respect to Poisson's equations. By carefully choosing test and trial spaces a formulation that is stable in a certain residual norm is obtained. Numerical stability and optimal convergence are established based on the conditional stability property of the problem. Tikhonov regularisation is necessary for high order polynomial approximation, but its weak consistency may be tuned to allow for optimal convergence. For low order elements a simple numerical scheme with optimal convergence is obtained without stabilization.   We also provide a guideline for feasible pairs of finite element spaces that satisfy suitable stability and consistency assumptions. Numerical experiments are provided to illustrate the theoretical results.
\end{abstract}

\maketitle

\section{Introduction}
A common situation when considering data assimilation problems or inverse problems is that the data available for the approximation of the partial differential equations is not such that the problem becomes well-posed. This is typically the case when data of the boundary condition is unknown on part of the boundary, but additional data is available elsewhere. There are two model situations. Either measurements of the solution are available in some subset of the bulk domain or, in the elliptic Cauchy problem, both Neumann and Dirichlet data are known somewhere on the boundary. In both cases one wish to continue the solution from the set where the data is available into the bulk domain. It is known that under certain assumptions such a continuation is unique and can therefore be approximated numerically. The stability properties are quantified through so called conditional stability estimates giving H\"older type stability in subsets of the bulk away from the domain boundary and logarithmic stability for bounds over the whole domain \cite{alessandrini2009stability}.

The pioneering method to solve the unique continuation problem was the quasi-reversibility method, in particular the mixed formulation introduced by Bourgeois \cite{bourgeois2005mixed}. In this case stability was ensured through Tikhonov regularization of the solution to the pde, leading to a perturbation error on the continuous level. For the discretization of such a formulation the stability of the continuous and discrete problems are handled by the same mechanism making it difficult to obtain error estimates that are sharp with respect to the approximation properties of the space and the stability of the target quantity. An alternative approach using a primal-dual stabilized approach was introduced by Burman \cites{Bur13,Bur14}. Here stabilizing terms on the discrete level ensured sufficient stability and for the first time error bounds were derived using the conditional stability of the unique continuation problem. This technique was further developed in \cite{Bur16, burman2018solving,burman2019unique,burman2020stabilized} to include high order polynomial approximation and more complex physics. When the polynomial order is increased some Tikhonov regularization needs to be introduced, but in this case the regularization parameter can be matched to the mesh-size and data perturbations and solution regularity in an optimal way \cite{BL24}. Some works have also been proposed in this context using non-conforming approximation \cite{WW20, Wang20, BDE21}. Only in \cite{BDE21} were error estimates derived using conditional stability. Another approach for unique continuation problems is to minimize the pde residual in dual norm
\cites{CKM22, DMS23}. In \cite{DMS23} a complete numerical analysis using conditional stability was proposed in this context. The relation between the quasi-reversibility method, Tikhonov regularization and least squares methods was further explored in \cite{bourgeois2018mixed}. Finally it was shown in \cite{BNO24} that error estimates reflecting the conditional stability and the best convergence of the residual allowed by the approximation space indeed are optimal for unique continuation. The sharpness of the conditional stability prohibits improvements on the feasible convergence rate. This result was sharpened by Monsuur and Stevenson in \cite{monsuur2024ultra}, where they showed that estimates in weak norm can be improved by minimizing the residual in the weakest possible topology.

The objective of the present work is to explore the interface between mixed quasi-reversibility methods, primal-dual-stabilized methods and residual minimization in dual norm. Our main result is a mixed quasi-reversibility method where the finite element spaces are chosen so that no stabilization of the forward problem is necessary, but optimal error estimates may nevertheless be obtained thanks to the inf-sup stability of the residual norms of the formulation. This shows the strong connection between the methods three methods. Their shared purpose is to control and minimize the bounding quantity of the conditional stability estimate. The different forms of the formulations are a reflection of the need for stability on both the discrete and the continuous level. As we see below, such stability can be obtained both through stabilization, allowing for the minimal dual space, or by choosing the test space carefully, allowing for a richer dual space, but avoiding discrete stabilization of the forward problem completely.

\section{Model problems and stability of the unique continuation}
In this section we will introduce the two model problems that we will discuss in this work and the associated conditional stability estimates.
\subsection{Model problem: unique continuation from data in the bulk}
Let $\Omega \in \mathbf{R}^d$, $d = 2,3$ be a polyhedral (polygonal), $\omega\subset \Omega $ be a subdomain that does not touch $\partial\Omega$. Let $(\cdot,\cdot)_U$, $\langle \cdot,\cdot\rangle_U$ and $<\cdot,\cdot>_U$ denote $L^2$, $H^1$ inner products and $H^{-1}-H^1$ pairing for (vector-valued) functions in $U$, respectively, and $||\cdot||_U$ denotes the $L^2$-norm in $U$. The \textbf{unique continuation problem} for Poisson's equation reads:
\begin{equation}\label{ucp}
    \left\{\begin{array}{rll}
        \Delta u &= f & \text{in $\Omega$} \\
         u&=q & \text{in $\omega$}
    \end{array}\right.,
\end{equation}
where $f\in H^{-1}(\Omega)$ and $q\in H^1(\omega)$. It is well known that the problem (\ref{ucp}) is ill-posed, by considering the classical example of Hardamard, e.g. \cite{hadamard2003lectures}. Such ill-posedness is that a solution may not exist and the bound in the following sense is absent:
\begin{equation*}
    ||u||_{H^1(\Omega)}\le C(||f||_{H^{-1}(\Omega)} + ||q||_{H^1(\omega)}).
\end{equation*}
This also causes the problem that the classical finite element discretization to (\ref{ucp}) may not be a stable and convergent scheme. Thus we define a continuous quasi-reversibility formulation for (\ref{ucp}) as follows: find $(u_\epsilon, \lambda_\epsilon) \in H^1(\Omega) \times H^1_0(\Omega)$, such that 
\begin{equation}\label{sys}
    \left\{\begin{array}{rll}
        \epsilon\langle u_\epsilon,v\rangle_\Omega+a(\lambda_\epsilon,v)+ (u_\epsilon,v)_\omega &= (q,v)_\omega & \forall v \in H^1(\Omega) \\
         a(u_\epsilon,w)-\langle \lambda_\epsilon,w\rangle_\Omega&=<f,w>_\Omega & \forall w\in H^1_0(\Omega)
    \end{array}\right.,
\end{equation}\\
for $\epsilon >0$. (\ref{sys}) is guaranteed to have a solution according to the Babuska-Lax-Milgram lemma \cite{Babuška1971}. It is not necessary for the solution to (\ref{sys}) to exactly fit the interior datum $q$ in $\omega$ or the weak formulation of the PDE $a(u_\epsilon,v)=<f,v>_\Omega$. Instead, we have the solution $u_\epsilon$ satisfying $\lim_{\epsilon\to 0}||u_\epsilon-u||_{H^1(\Omega)}=0$ and $\lambda$, as a Lagrange multiplier, satisfying $\lim_{\epsilon\to 0}||\lambda_\epsilon||_{H^1(\Omega)} = 0$. Following \cite{bourgeois2005mixed}, this can be seen by testing system (\ref{sys}) with $u_\epsilon-u$. This gives
\begin{equation}\label{u_epsilonbound}
    \epsilon\langle u_\epsilon,u_\epsilon\rangle_\Omega + (u_\epsilon-u,u_\epsilon-u)_\omega +\langle\lambda_\epsilon,\lambda_\epsilon\rangle_\Omega = \epsilon\langle u_\epsilon, u\rangle_\Omega.
\end{equation}
Since the latter two terms on the left-hand side are nonnegative, we have $||u_\epsilon||_{H^1(\Omega)}$ bounded by $ ||u||_{H^1(\Omega)}$ and thus $\{u_\epsilon\}$ has a weakly convergent subsequence converging to some function $\hat{u}\in H^1(\Omega)$. The boundedness of $u_\epsilon$ and (\ref{u_epsilonbound}) also gives 
\begin{equation}\label{weakbound}
\epsilon^\frac{1}{2}||u_\epsilon||_{H^1(\Omega)}+||u_\epsilon-u||_\omega+||\lambda_\epsilon||_{H^1(\Omega)}\le \epsilon^\frac{1}{2}||u||_{H^1(\Omega)}.
\end{equation}
Consequently by (\ref{weakbound}) we have $\hat{u}$ solves (\ref{ucp}), and by uniqueness of the solution to problem (\ref{ucp}), we conclude $\hat{u}=u$. Finally we have $||u_\epsilon-u||_{H^1(\Omega)}\to 0$, since
\begin{equation*}
    \langle u_\epsilon-u,u_\epsilon-u\rangle_\Omega = \langle u_\epsilon-u,u_\epsilon\rangle_\Omega-\langle u_\epsilon-u,u\rangle_\Omega\to 0.
\end{equation*}
This is due to the fact that the first term on the right hand side is negative by (\ref{u_epsilonbound}) and $u_\epsilon$ converges weakly to $u$.\\

\subsection{Model problem: Cauchy problem} The Cauchy problem we are interested in has the form: for $\Omega \in \mathbf{R}^d$, $d = 2,3$ a polyhedral (polygonal) domain and $\Gamma_0$ an open portion of $\partial\Omega$, find a function $u\in H^1(\Omega)$, such that
\begin{equation}\label{cp}
    \left\{\begin{array}{rlc}
         -\Delta u& =f & \texttt{in $\Omega$} \\
         u &=0 & \texttt{on $\Gamma_0$}\\
         \nabla u\cdot \nu &= \phi & \texttt{on $\Gamma_0$}
    \end{array},
    \right.
\end{equation}
where $\nu$ is the outward normal on $\Gamma_0$, $f\in L^2(\Omega)$, and $\phi\in H^{-\frac{1}{2}}(\Gamma_0)$. We introduce $H^{-\frac{1}{2}}(\Gamma_0)$ as the dual space of $H_{00}^\frac{1}{2}(\Gamma_0)$, which is defined as
\begin{equation*}
    H^\frac{1}{2}_{00}(\Gamma_0)=\{v\in L^2(\Gamma_0): \exists w\in H^1(\Omega), w|_{\Gamma_0}=v, w|_{\partial\Omega\backslash\Gamma_0}=0\}.
\end{equation*}
We denote $\Gamma_1 = \partial\Omega \backslash \Gamma_0$ the complement of $\Gamma_0$ and introduce the spaces 
\begin{equation}\label{v0v1}
\begin{array}{cc}
V_0=\{v\in H^1(\Omega): v|_{\Gamma_0} = 0\},& V_1 = \{v\in H^1(\Omega):w|_{\Gamma_1} = 0\}.
\end{array}
\end{equation}
Unlike the 'mixed' nature of the unique continuation problem (\ref{ucp}), the Cauchy problem has a one line weak form: find $u\in V_0$, such that
\begin{equation}\label{weakformcp}
\begin{array}{cc}
      a(u,v) = l(v) & \forall v\in V_1, \\
\end{array}
\end{equation}
where 
\begin{equation}\label{linearcp}
    l(v) = \int_\Omega fvdx + \int_{\Gamma_0}\phi vdS.
\end{equation}
Note that for simplicity we assume in (\ref{linearcp}) that $f\in L^2(\Omega)$ and $\phi\in H^\frac{1}{2}(\Gamma_0)$, but we naturally extend the definition of $l(v)$ to $V_{1}^{'}$, which represents the dual of $V_1$. We give a mixed formulation of quasi-reversibility for the Cauchy problem following the framework developed by Bourgeois et al. \cite{bourgeois2005mixed}, and \cite{bourgeois2018mixed} for an abstract formulation, which reads:
find $(u_\epsilon,\lambda_\epsilon)\in V_0\times V_1$, such that
\begin{equation}\label{sys2}
    \left\{\begin{array}{rll}
        \epsilon\langle u_\epsilon,v\rangle_\Omega+a(\lambda_\epsilon,v) &= 0 & \forall v \in V_0 \\
         a(u_\epsilon,w)-\langle \lambda_\epsilon,w\rangle_\Omega&=l(w) & \forall w\in V_1
    \end{array}\right..
\end{equation}\\
Following the same argument as the last part for unique continuation problems, the solution $(u_\epsilon,\lambda_\epsilon)$ to the system (\ref{sys2}) converges to $(u,0)$ in $H^1$ norm.
\begin{remark}
    For simplicity, we only discuss the stability and convergence of the numerical scheme of (\ref{cp}). The Cauchy problem with Non-homogeneous Dirichlet conditions $u|_{\Gamma_0}=g\in H^\frac{1}{2}(\Gamma_0)$ can also be treated in the same way by invoking a right inverse of $g$ (see, for example \cite[Section 33.1]{ern2021finite2}).
\end{remark}
\subsection{Continuous stability}
The above argument is standard for derive convergence for continuous quasi-reversibility formulation. See, for example \cite[Theorem 4]{bourgeois2005mixed}. However, this result does not provide information on the convergence rate with respect to $\epsilon$. To establish such a result, we will need conditional stability estimates or quantitative unique continuation. Let $s(u): H^1(\Omega) \to \mathbf{R}^+$ be a semi-norm and $d(u):H^1(\Omega) \to \mathbf{R}^+$ be the measurement of data in a PDE problem. We consider $\Theta(x): \mathbf{R}^+  \to \mathbf{R}^+$ as a function which is continuous and monotone increasing with $\lim_{x\to 0}\Theta(x)$, and then the conditional stability estimate takes the following form:\\

If $||u||_{H^1(\Omega)}\le E$ and $d(u)\le \eta$, then $s(u)\le \Theta(\eta).$\\

First, we state a more general problem to the Cauchy problem (\ref{cp}) by generalize the linear form (\ref{linearcp}). This formulation incorporates discrete functions in its solution space (c.f. \cite[Problem 1.6]{alessandrini2009stability}).
\begin{equation}\label{generalcp}
    a(u,v) = l(v):= \int_\Omega fv +F\cdot \nabla vdx+ \int_{\Gamma_0}\phi vdS, \forall v\in V_1,
\end{equation}
where $F\in L^2(\Omega,\mathbf{R}^d)$. Note that (\ref{linearcp}) is a special case when $F=0$.
Now we give two version of such conditional stability estimates, with respect to the unique continuation problem and the Cauchy problem. For the full details of such theorems, we refer the readers to \cite{alessandrini2009stability}. 
\begin{lemma}[Local conditional stability estimate]\thlabel{tbi}
    Assume $u\in H^1(\Omega)$ solves (\ref{ucp}) or (\ref{cp}) and satisfies that 
    \begin{equation*}
        ||u||_{H^1(\Omega)}\le E.
    \end{equation*}
    Define the data measurement $d(u)$ for the unique continuation problem as:
    \begin{equation}\label{datauc}
        d(u) = ||q||_\omega + ||f||_{H^{-1}(\Omega)}\le\eta
    \end{equation}
    and for the Cauchy problem as
    \begin{equation}\label{datacp}
        d(u) = ||\phi||_{H^{-\frac{1}{2}}(\Gamma_0)}+||f||_\Omega+||F||_\Omega\le \eta.
    \end{equation}
    Let $G\subset\Omega$ such that $\omega\subset G$ and $dist(G,\partial\Omega)>0$ for the unique continuation problem, or $dist(G, \Gamma_1)>0$ for the Cauchy problem. Then, there exists a constant $C>0$ and $\tau\in (0,1)$ depending on the geometries, such that
    \begin{equation*}
        s(u) := ||u||_{H^1(G)} \le C\eta^\tau (E+\eta)^{1-\tau}.
    \end{equation*}
\end{lemma}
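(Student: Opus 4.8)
The plan is to reduce both problems to a single quantitative unique continuation statement and then propagate smallness from the data region into $G$ through a finite chain of interior three-ball inequalities, using the a priori bound $\|u\|_{H^1(\Omega)}\le E$ to control the large factor at each step while $d(u)\le\eta$ supplies the smallness of the Cauchy data. The technical core is an interior three-ball (propagation of smallness) estimate: for concentric balls $B_{r_1}\subset B_{r_2}\subset B_{r_3}\Subset\Omega$ and $u$ with $\Delta u=g$,
\begin{equation*}
\|u\|_{H^1(B_{r_2})}\le C\bigl(\|u\|_{H^1(B_{r_1})}+\|g\|_{*}\bigr)^{\alpha}\bigl(\|u\|_{H^1(B_{r_3})}+\|g\|_{*}\bigr)^{1-\alpha}
\end{equation*}
for some $\alpha\in(0,1)$ and $C$ depending on the radii. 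This is exactly where the PDE structure enters: the inequality follows from a Carleman estimate for the Laplacian with a suitable radial weight, the source $g$ being absorbed into the right-hand side. For Poisson's equation the source is $f$ (unique continuation) or, in the generalized Cauchy form, $\Div F-f$, a source in divergence form; this explains why only the $L^2$ norm $\|F\|_\Omega$ — and not $\|\Div F\|_\Omega$ — enters $d(u)$, the divergence structure being handled by integration by parts inside the Carleman estimate, so that $\|g\|_*\lesssim\eta$ in all cases.

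The next step is to convert the measured data into smallness on an initial ball. \emph{For the unique continuation problem} the datum $q=u|_\omega$ gives $\|u\|_{L^2(\omega)}\lesssim\eta$ directly; a Caccioppoli (interior gradient) estimate for $\Delta u=f$ upgrades this to $\|u\|_{H^1(\omega')}\lesssim\eta$ on a slightly smaller $\omega'\Subset\omega$, furnishing smallness on an initial ball $B_0\subset\omega'$. \emph{For the Cauchy problem} the situation is more delicate because the smallness lives on the boundary portion $\Gamma_0$, where the Dirichlet trace vanishes and the Neumann trace $\phi$ is of size $\eta$. I would convert this boundary Cauchy smallness into interior smallness on a ball $B_0\subset\Omega$ abutting $\Gamma_0$, either by a boundary Carleman estimate or by extending $u$ by zero across $\Gamma_0$ (licit since $u|_{\Gamma_0}=0$) to a solution on an enlarged domain whose source picks up the jump $\phi$ of the normal derivative. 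This boundary step is the first genuine obstacle, and it is precisely here that the hypothesis $\mathrm{dist}(G,\Gamma_1)>0$ is used: the subsequent chain of balls must stay away from the interface $\overline{\Gamma_0}\cap\overline{\Gamma_1}$, where no Cauchy data is available.

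With smallness established on $B_0$, I would choose a compact connected set $K$ with $B_0\cup G\subset K$ and $K\Subset\Omega$ (unique continuation), or $\mathrm{dist}(K,\Gamma_1)>0$ (Cauchy). Covering a path in $K$ joining $B_0$ to $G$ by a finite overlapping chain of balls $B_0,B_1,\dots,B_N$ whose concentric triples all lie in $\Omega$ (resp. away from $\Gamma_1$), one applies the three-ball inequality across consecutive balls, bounding the large factor each time by $E$. An induction then yields $\|u\|_{H^1(B_j)}\le C\,\eta^{\alpha^{j}}(E+\eta)^{1-\alpha^{j}}$, and summing over the finitely many balls covering $G$ gives
\begin{equation*}
\|u\|_{H^1(G)}\le C\,\eta^{\tau}(E+\eta)^{1-\tau},\qquad \tau=\alpha^{N}\in(0,1),
\end{equation*}
with $C$ and $\tau$ depending on the geometry only through the number and radii of the balls in the chain.

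The main obstacle is therefore the pair of Carleman estimates — the interior one behind the three-ball inequality and, for the Cauchy problem, its boundary counterpart near $\Gamma_0$ — together with the correct bookkeeping of the divergence-form source; once these are in hand the chaining is routine, if slightly tedious. Since these estimates and the precise dependence of $C$ and $\tau$ on the geometry are developed in full in \cite{alessandrini2009stability}, I would invoke them from there and devote the exposition to the reduction of the data and the propagation argument.
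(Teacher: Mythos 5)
Your proposal is correct and follows essentially the route the paper relies on: the paper gives no proof of its own but defers to \cite[Theorems 1.7 and 4.4]{alessandrini2009stability}, whose argument is exactly your reduction of the data to smallness on an initial ball followed by propagation via three-ball inequalities (with a boundary step near $\Gamma_0$ for the Cauchy problem). The only point the paper adds is the remark that the cited theorems are stated in $L^2$ and must be upgraded to $H^1(G)$ using the $H^1$ version of the three-sphere inequality (as in \cite[Corollary 3]{burman2019unique}), which your chain already does by working with $H^1$ norms throughout.
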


\begin{lemma}\thlabel{global}
     Assume $u\in H^1(\Omega)$ solves (\ref{ucp}) or (\ref{cp}) and satisfies that 
    \begin{equation*}
        ||u||_{H^1(\Omega)}\le E.
    \end{equation*}
    Define the data measurement $d(u)$ for the unique continuation problem as:
    \begin{equation*}
        d(u) = ||q||_\omega + ||f||_{H^{-1}(\Omega)}\le\eta
    \end{equation*}
    and for the Cauchy problem as
    \begin{equation*}
        d(u) = ||\phi||_{H^{-\frac{1}{2}}(\Gamma_0)}+||f||_\Omega+||F||_\Omega\le \eta.
    \end{equation*}
    Then there exists a constant $C>0$ and $\tau\in(0,1)$ depending only on the geometry of $\Omega$, such that
    \begin{equation*}
        s(u) := ||u||_\Omega \le C(E+\eta)w(\frac{\eta}{E+\eta}),
    \end{equation*}
    where $w(t): (0,1)\to \mathbf{R}$ satisfies
    \begin{equation*}
        w(t) = \frac{1}{\log(t^{-1})^\tau}
    \end{equation*}
\end{lemma}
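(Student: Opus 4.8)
The plan is to deduce the global logarithmic bound from the interior H\"older estimate of \thref{tbi}, by splitting $\Omega$ into an interior region on which \thref{tbi} applies and a thin boundary collar whose contribution is controlled crudely by the a priori bound, and then optimising the collar width as a function of $\eta/(E+\eta)$. Concretely, for $\delta>0$ I would set $\mathcal{O}_\delta=\{x\in\Omega:\mathrm{dist}(x,\partial\Omega)<\delta\}$ and $G_\delta=\Omega\setminus\mathcal{O}_\delta$, so that $\mathrm{dist}(G_\delta,\partial\Omega)=\delta$ (and $\omega\subset G_\delta$ for the unique continuation problem once $\delta$ is small), and decompose $\|u\|_\Omega^2=\|u\|_{G_\delta}^2+\|u\|_{\mathcal{O}_\delta}^2$, estimating the two terms by entirely different mechanisms.

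For the collar term I would use only $u\in H^1(\Omega)$ and the a priori bound. Slicing $\mathcal{O}_\delta$ in the normal direction and applying the fundamental theorem of calculus with Cauchy--Schwarz bounds $|u|$ at normal distance $t<\delta$ by its boundary trace plus a normal-gradient contribution; integrating in $t\in(0,\delta)$ and over $\partial\Omega$, and invoking the trace inequality $\|u\|_{L^2(\partial\Omega)}\le C\|u\|_{H^1(\Omega)}$, gives $\|u\|_{\mathcal{O}_\delta}\le C\,\delta^{1/2}\,\|u\|_{H^1(\Omega)}\le C\,\delta^{1/2}(E+\eta)$. This is the only place the crude $\delta^{1/2}$ loss enters.

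For the interior term I would apply \thref{tbi} with $G=G_\delta$, giving $\|u\|_{G_\delta}\le\|u\|_{H^1(G_\delta)}\le C(\delta)(E+\eta)\big(\tfrac{\eta}{E+\eta}\big)^{\tau(\delta)}$. The main obstacle is that both the constant $C(\delta)$ and, crucially, the H\"older exponent $\tau(\delta)$ in \thref{tbi} degenerate as $G_\delta$ approaches $\partial\Omega$, with $\tau(\delta)\to0$ as $\delta\to0$. Making this dependence quantitative is the genuinely hard analytic content: it rests on chaining three-spheres inequalities (equivalently, on the underlying Carleman estimates) from the interior out towards the collar, where the number of links grows and the product of the per-step exponents decays. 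I would import the explicit rate of decay of $\tau(\delta)$, together with the controlled growth of $C(\delta)$, from \cite{alessandrini2009stability}.

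Finally I would combine the two bounds and optimise in $\delta$. Writing $t=\eta/(E+\eta)\in(0,1)$, the interior term reads $C(\delta)(E+\eta)\exp(-\tau(\delta)\log(t^{-1}))$ and the collar term $C\delta^{1/2}(E+\eta)$: keeping $\delta$ fixed would merely reproduce H\"older decay in $t$, but as $t\to0$ the collar forces $\delta=\delta(t)\to0$, hence $\tau(\delta)\to0$, and the optimal balance---choosing $\delta(t)$ so that $\tau(\delta)\log(t^{-1})$ is comparable to $\log(\delta^{-1})$---turns the H\"older rate into the logarithmic modulus, yielding $\|u\|_\Omega\le C(E+\eta)\,w(t)$ with $w(t)=\log(t^{-1})^{-\tau}$. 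This last bookkeeping is routine once the rate from \cite{alessandrini2009stability} is in hand; the delicate step remains the quantitative degeneration of $\tau(\delta)$ near the boundary.
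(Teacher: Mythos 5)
The paper gives no proof of this lemma at all: it is quoted directly from \cite{alessandrini2009stability} (Theorems 1.7 and 4.4), so your sketch should be measured against that reference rather than against anything in the text. Your interior-plus-boundary-collar decomposition, the $\delta^{1/2}$ collar bound, and the optimisation of the collar width against the degenerating H\"older exponent is precisely the strategy used there, and the outline is correct, with the one genuinely hard ingredient --- the quantitative decay of $\tau(\delta)$ and growth of $C(\delta)$ coming from the chained three-spheres/Carleman estimates --- correctly identified and deferred to the same reference that the paper itself cites.
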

\begin{remark}
    In \thref{tbi}, we derived an error estimate for $||u||_{H^1(G)}$, while the reference theorems \cite[Theorem 1.7, Theorem 4.4]{alessandrini2009stability} only discussed the $L^2$-norm. The same process in the reference theorems work for $H^1$-norm as well, by using the $H^1$ version of Three sphere inequality. See, for example \cite[Corollary 3]{burman2019unique}.
\end{remark}
With \thref{tbi} and \thref{global} we can get a convergence rate for systems (\ref{sys}) and (\ref{sys2}). For example, let $e_\epsilon = u_\epsilon - u$, then by the second equation of (\ref{sys}) and (\ref{weakbound}), we have $||e_\epsilon||_\omega+||\Delta e_\epsilon||_{H^{-1}(\Omega)}\le \epsilon^\frac{1}{2}||u||_{H^1(\Omega)}$ and thus $||e_\epsilon||_{H^1(G)}\le C\epsilon^\frac{\tau}{2}||u||_{H^1(\Omega)}$. When it comes to discretization, the approximation error, when the measurement of data is exact, is expected to satisfy 
\begin{equation}\label{optimalrate}
    ||u_h-u||_{H^1(G)}\le Ch^\tau||u||_{H^2(\Omega)}
\end{equation}
with mesh size $h$ with a proper choice of the regularization parameter $\epsilon$ (in this case, $\epsilon \simeq h^2$). This convergence rate has recently been shown to be optimal in \cite{BNO24}. However, a naive $H^1$-conforming discretization for (\ref{sys}) does not achieve such a convergence rate; see \cite[Section 4]{burman2018solving}. This is because the $H^1$-regularization $\epsilon\langle\cdot,\cdot\rangle_\Omega$ alone can not both serve as Tikhonov regularization and numerical stabilizer that generates optimal convergence rate for the discrete system. Recently, a \textbf{ discretize then regularize} method has been developed to tackle this problem. See \cite{burman2018solving,burman2019unique,burman2020stabilized,BL24}, where stabilizers at the have been introduced at the discrete level to generate an optimal convergence.  The regularization parameter $\epsilon$ is  be mesh dependent and tuned against the conditional stability and the numerical stabilization.

In this paper, we propose a new finite element method based on the quasi-reversibility method, instead of introducing stabilizing terms to ensure numerical stability, we choose the trial and test spaces carefully in the discretization of (\ref{sys}) and (\ref{sys2}). The objective is to achieve an inf-sup stable formulation and through weak consistency the optimal convergence rate suggested by (\ref{optimalrate}).

\section{The finite element formulation of quasi-reversibility}
In this section we introduce our finite element formulation of quasi-reversibility. 
\subsection{Finite element preliminaries}\label{prelim}
Let $\{\mathcal{T}_h\}_h$ be a family of shape-regular and quasi-uniform tesselations of $\Omega$ in nonoverlapping simplices such that $\forall K,K'\in \mathcal{T}_h$, $K\cap K'$ is either empty, a common vertex, or a common facet (edge). Let the diameter of a simplex $K$ be $h_K$ and the outward normal be $n_K$. The global mesh parameter is defined by $h:\max_{K\in\mathcal{T}}h_K$. Let $\mathcal{F}$ denote the collection of all faces ($d=3$) or edges ($d=2$) with respect to the mesh $\mathcal{T}_h$.
\begin{equation*}
    \begin{array}{cc}
       \mathcal{F}_I := \{F\in\mathcal{F}:F\not\subset \partial\Omega\},  & \mathcal{F}_l = \{F\in\mathcal{F}: F^o\cap \Gamma_l \neq \emptyset\},l\in\{0,1\},
    \end{array}
\end{equation*}
where $\Gamma_0$ is the part of the boundary where the solution is homogeneous and Neumann data is given. We assume $F_{\Gamma_0}\cap F_{\Gamma_1} = \emptyset$. Additionally, $\Gamma_0 =\emptyset$ in the unique continuation problem (\ref{ucp}). Let $n_F$ be a normal on $F$, whose orientation is arbitrary but fixed. Let $k\in\mathbb{N}$, $D$ be a domain in $\mathbb{R}^d$, $d=1,2,3$, and $\mathbf{P}_k(D)$ be the space of polynomials with degree $\le k$ on $D$. We introduce the space of all $L^2$ functions which are polynomial on each element $K$
\begin{equation*}
    X^k_h := \{x_h\in L^2(\Omega), x_h|_K \in \mathbf{P}_k(K), \forall K\in \mathcal{T}_h\}.
\end{equation*}
We also define its $H^1$-conforming subspace and that with homogeneous boundary
\begin{equation}\label{H1spaces}
\begin{array}{cc}
    V^k_h:= H^1(\Omega)\cap X^k_h, & W^k_h:= H_0^1(\Omega)\cap X^k_h\\
    V^k_{h0}: V_0\cap X^k_h, & V^k_{h1}: V_1\cap X^k_h,
\end{array}
\end{equation}
where $V_0,V_1$ are defined in (\ref{v0v1}). Now we mention some notations we will be using repeatedly. Let $\mathcal{L}$ be a differential operator (such as $\Delta,\nabla$) then $\mathcal{L}_h$ ($\Delta_h,\nabla_h$) represents its discrete version, i.e.
\begin{equation*}
\begin{array}{cc}
     \mathcal{L}_hx_h = \sum_{K\in\mathcal{T}_h}T(\mathcal{L}x_h|_K),& \forall x_h\in X^k_h \\
\end{array}
\end{equation*}
where $T$ is the extension operator to extend the domain to $\Omega$ with value $0$. For any $x_h\in X_h$, we denote its average function and the jump function of its normal gradient across $F\in \mathcal{F}_I$ by 
\begin{equation*}
\begin{aligned}
 \{x_h\} &:= \frac{1}{2}(x_h|_{K_1}+x_h|_{K_2}),\\
\llbracket\nabla x_h\cdot n\rrbracket &:=\nabla x_h|_{K_1}\cdot n_{K_1} + \nabla x_h|_{K_2}\cdot n_{K_2}.
\end{aligned}
\end{equation*}
Moreover, $\forall x_h, y_h \in X_h$, define 
\begin{equation*}
    \mathcal{J}_h(x_h,y_h) = \sum_{F\in \mathcal{F}_I}\int_Fh\llbracket\nabla x_h\cdot n\rrbracket\cdot\llbracket\nabla y_h\cdot n\rrbracket dS.
\end{equation*}
We present here also the well-known trace and inverse inequalities (see, for example, \cite[Section 1.4.3]{di2011mathematical}). We have $\forall K\in \mathcal{T}_h$:

(1) Continuous trace inequality.
\begin{equation}\label{tc1}
    ||v||_{\partial K}\le C(h^{-\frac{1}{2}}||v||_U+h^{\frac{1}{2}}||\nabla v||_K), \;\; \forall v\in H^1(K).
\end{equation}

(2) Discrete trace inequality.
\begin{equation}\label{tc2}
    ||\nabla v_h \cdot n||_{\partial K}\le Ch^{-\frac{1}{2}}||\nabla v_h||_K, \;\; \forall v_h\in \mathbf{P}_k(K).
\end{equation}

(3) Inverse inequality.
\begin{equation}\label{inv}
    ||\nabla v_h||_K\le Ch^{-1}||v_h||_K, \;\; \forall v_h\in\mathbf{P}_k(K).
\end{equation}\\
We also include the interpolation operators that we will use repeatedly.

(1) The Scott-Zhang operator $\pi^k_{sz}: H^{\frac{1+\epsilon}{2}}(\Omega) \to X_h^k$. It maintains a homogeneous boundary condition and has the optimal approximation property, see \cite{ciarlet2013analysis}:
$$ ||v-\pi^k_{sz}v||_{H^m(\mathcal{T}_h)} \le Ch^{s-m}|v|_{H^{s}(\Omega)}, $$
whenever $v\in H^{s}(\Omega)$ with $\frac{1}{2}<s\le k+1$ and $m\in \{0:[s]\}$.\\

(2) $L^2$-projection $\pi^k_h: L^2 \to X_h^k$ with $k\ge 1$. It is the best approximation in the $L^2$-norm, and thus it has optimal approximation property in $L^2$-norm
\begin{equation*}
    ||v-\pi^k_hv||_\Omega \le C h^s|v|_{H^{s}(\Omega)},
\end{equation*}
whenever $v\in H^s(\Omega)$ with $0\le s \le k+1$.\\

\subsection{Discrete formulation: unique continuation}
 We now specify our discrete formulation of quasi-reversibility for unique continuation problem, which reads: find $(u_h,\lambda_h)\in V_h\times W_h$, such that
\begin{equation}\label{dissys}
    \left\{\begin{array}{rll}
        \epsilon\langle u_h,v_h\rangle_\Omega+a_h(\lambda_h,v_h)+ (u_h,v_h)_\omega &= (q,v_h)_\omega & \forall v_h \in V_h \\
         a_h(u_h,w_h)-\gamma^2\langle \lambda_h,w_h\rangle_{\mathcal{T}_h}&=(f_h,w_h)_\Omega & \forall w_h\in W_h
    \end{array}\right.,
\end{equation}
where $0<\gamma<1$, $V_h = V_h^k$ , and $W_h^1\subset W_h\subset X_h^m$ for $k,m\in \mathbf{N}$. $a_h(x,y) = (\nabla_hx,\nabla_hy)_{\mathcal{T}_h}$, and $\langle\cdot,\cdot\rangle_{\mathcal{T}_h}$ represents the discrete $H^1$ inner product. $f_h$ is the interpolant of $f$ in $W_h^m$ such that $\forall w_h\in W_h^m$,
\begin{equation}\label{f_h}
<f,w_h> = (f_h,w_h)_\Omega.
\end{equation}
Now we test (\ref{f_h}) by $w_h=f_h$. By using an inverse inequality we obtain
\begin{equation}
    ||f_h||_\Omega \le ||hf||_{H^{-1}(\Omega)}.
\end{equation}

\begin{remark}\thlabel{rm1}
    As shown in \cite[Section 4]{burman2018solving}, by taking $W_h=V_h\cap H^1_0(\Omega)$ the scheme fail to generate an optimal convergence solution even by choosing an optimal regularization parameter $\epsilon$ with respect to $h$. Moreover, the choice of the optimal $\epsilon$ depends on the generally unknown conditional stability parameter $\tau$.
\end{remark}
\begin{remark}
    Note that in our framework conforming $V_h$ is always used but $W_h$ may be non-conforming, and thus we need to use the broken bi-linear forms in system (\ref{dissys}).
\end{remark}
Our aim is to show that scheme (\ref{dissys}) converges at an optimal rate with appropriate choice of the finite dimensional space $W_h$. To achieve this, we need to bound the discretization error and the interpolation error under a 'correct' norm. Inspired by the conditional stability estimate, we define a norm as the data term in {\ref{datauc})}:
\begin{equation}\label{ucnorm}
    ||u||_{uc}:=d(u) =||u||_\omega+||\Delta u||_{H^{-1}(\Omega)}, \forall u\in H^1(\Omega)
\end{equation}
Note that $||u||_{uc}$ is indeed a norm by using \thref{global}: $||u||_{uc}=0\to u=0$. This norm makes sense at the discrete level since we restrict ourselves to conforming subspaces for $u_h$. If it can be shown that $||u_h-u||_{uc}$ converges optimally, then together with \thref{tbi} or \thref{global} we can derive a convergence rate under the $H^1$ norm. Now we decompose $||u_h-u||_{uc}$ by observing
\begin{equation*}
    ||u_h-u||_{uc} \le ||u_h-\pi_{sz}^ku||_{uc}+||\pi_{sz}^ku-u||_{uc},
\end{equation*}
where $\pi_{sz}^k$ is the Scott-Zhang operator. First by the definition of the norm (\ref{ucnorm}), we have the bound
\begin{equation*}
||v||_{uc}\lesssim ||v||_{H^1(\Omega)}, \forall v\in H^1(\Omega).
\end{equation*}
This immediately shows that the interpolation error $||\pi_{sz}^ku-u||_{uc}$ is vanishing optimally. Now the only remaining part is to bound the discrete error $e_h = u_h-\pi^k_{sz}u$. For $||e_h||_{H^{-1}(\Omega)}$ we have: $\forall w\in H_0^1(\Omega)$
\begin{equation}\label{rbounduc}
\begin{aligned}
    a(e_h,w)  =& a(e_h,w-\pi^1_{sz}w)+a(u_h-u,\pi^1_{sz}w)+a(u-\pi_{sz}^ku,\pi^1_{sz}w)\\
    =&  \sum_{F\in \mathcal{F}_I}\int_F\llbracket\nabla e_h\cdot n\rrbracket(w-\pi^1_{sz}w)dS + ||u-\pi^k_{sz}u||_{H^1(\Omega)}||w||_{H^1(\Omega)}\\
    &+(\Delta_h e_h, w-\pi^1_{sz}w)_\Omega+\gamma^2\langle\lambda_h,\pi^1_{sz}w\rangle_{\mathcal{T}_h}\\
    \le &C\left(||h\Delta_h e_h||_\Omega+\mathcal{J}_h(e_h,e_h)^\frac{1}{2}\right. \\
    & \left.+||\gamma\lambda_h||_{H^1(\mathcal{T}_h)}+||u-\pi^k_{sz}u||_{H^1(\Omega)}\right)||w||_{H^1(\Omega)},
    \end{aligned}
\end{equation}
where the second equality applies the fact that $W_h^1\subset W_h$ and the inequality is a direct application of the trace inequality and the Cauchy-Schwartz inequality. From (\ref{rbounduc}) we observe that if we control the convergence rate for the right-hand side, together with $||e_h||_\omega$, the scheme will generate an optimal convergence. For all $(v_h,w_h)\in V_h\times W_h$, define:
\begin{equation}\label{triplenormuc}
    |||(v_h,w_h)|||^2_{uc}:= \epsilon||v_h||^2_{H^1(\Omega)}+||v_h||^2_\omega + \mathcal{J}_h(v_h,v_h)+||h\Delta_h v_h||^2_\Omega+||\gamma w_h||^2_{H^1(\mathcal{T}_h)},
\end{equation}
then we have
\begin{equation*}
    ||e_h||_{uc} \lesssim |||(e_h,\lambda_h)|||_{uc}.
\end{equation*}
Therefore our goal now is to show $|||(e_h,\lambda_h)|||_{uc}$ converges optimally. 
\begin{remark}
    The reason why we add $\epsilon||v_h||^2_{H^1(\Omega)}$ in (\ref{triplenormuc}) is the remaining terms, even though they define a norm in the discrete space, do not control the $H^1$ norm of the discrete solution uniformly with mesh refinement.
\end{remark}
\begin{remark}
    It is worth noting that, since $e_h$ is defined in $V_h$, we do not need to assume higher regularity for the continuous solution $u$ but only $u\in H^1(\Omega)$ (see \cite{BL24}).
\end{remark}

\subsection{Discrete formulation: Cauchy problem}Using the same notations we also propose the discrete formulation for the Cauchy problem: find $(u_h,\lambda_h)\in V_{0h}\times V_{1h}$, such that
\begin{equation}\label{dissys2}
    \left\{\begin{array}{rll}
        \epsilon\langle u_h,v_h\rangle_\Omega+a_h(\lambda_h,v_h) &= 0 & \forall v_h \in V_{0h} \\
         a_h(u_h,w_h)-\gamma^2\langle \lambda_h,w_h\rangle_\Omega&=(f,w_h)_\Omega+(\phi_h,w_h)_{\Gamma_0} & \forall w_h\in V_{1h}
    \end{array}\right.,
\end{equation}
where $V_{0h} = V_{0h}^k$ and $V_{1h}^1\subset V_{1h}\subset X_h^m$ for $k,m\in \mathbf{N}$. $\phi_h$ is the interpolant of $\phi$, defined by: $\forall w_h\in V_{1h}^m$,
\begin{equation}\label{phi_h}
    <\phi,w_h>_{H^{-\frac{1}{2}}(\Gamma_0)\to H^\frac{1}{2}(\Gamma_0)} = (\phi_h,w_h)_{\Gamma_0}.
\end{equation}
Note that the above interpolation does not guarantee a unique interpolant $\phi_h$. We choose the interpolant having only non zero degrees of freedoms on the facets $F\in \mathcal{F}$ such that $F\subset \Gamma_0$. Then by choosing $w_h$ such that $w_h|_{\Gamma_0}=\phi_h$, applying inverse inequality (\ref{inv}) and inverse trace inequality (\ref{invtrace}). We have the following estimate
\begin{equation}
    ||\phi_h||_{\Gamma_0}^2 = (\phi_h,w_h)_{\Gamma_0} \le ||\phi||_{H^{-\frac{1}{2}}(\Gamma_0)}||w_h||_{H^{\frac{1}{2}}(\Gamma_0)}.
\end{equation}
Then proceeding with a trace inequality, an inverse inequality and an inverse trace inequality
\[
||w_h||_{H^{\frac{1}{2}}(\Gamma_0)} \le C |w_h|_{H^1(\Omega)}\le C h^{-1} ||w_h||_{L^2(\Omega)} \le C h^{-\frac12} ||\phi_h||_{\Gamma_0}.
\]
Now we define the norm for functions in $V_0$ we want to control as in the discussion of the unique continuation problem. We claim that to bound $d(u-u_h)$ with $d(\cdot)$ defined in (\ref{datacp}) it is equivalent to define :
\begin{equation*}
    ||v||_{cp} = \sup_{w\in V_1}\frac{a(v,w)}{||w||_{H^1(\Omega)}}.
\end{equation*}
We give the details of this claim in Appendix \ref{equinorms}. In this formulation we naturally have
\begin{equation*}
    ||v||_{cp} \le C||v||_{H^1(\Omega)},
\end{equation*}
which gives immediately that $||u-\pi_{sz}^ku||_{cp}$ vanishes optimally. To consider the discretization error $||u_h-\pi_{sz}^ku||_{cp}$, we use the same decomposition $a(e_h,w) = a(e_h,w-\pi_{sz}^1w)+a(u_h-u,\pi_{sz}^1w)+a(u-\pi_{sz}^ku,\pi_{sz}^1w)$. Then by the same technique as (\ref{rbounduc}), we obtain: $\forall w\in V_1$
\begin{equation}\label{rboundcp}
\begin{aligned}
    a(e_h,w)\le &C\left(||h\Delta_h e_h||_\Omega+\mathcal{J}_h(e_h,e_h)^\frac{1}{2} + ||h^\frac{1}{2}(\nabla e_h\cdot\nu)||_{\Gamma_0}\right. \\
    &\left.+||\gamma\lambda_h||_{H^1(\mathcal{T}_h)}+||u-\pi^k_{sz}u||_{H^1(\Omega)}\right)||w||_{H^1(\Omega)}
    \end{aligned}.
\end{equation}
Thus we define: $\forall (v_h,w_h)\in V_{0h}\times V_{1h}$
\begin{equation}\label{triolenormcp}
\begin{aligned}
    |||(v_h,w_h)|||^2_{cp}:= & \epsilon||v_h||^2_{H^1(\Omega)}+\mathcal{J}_h(v_h,v_h)+h||(\nabla v_h\cdot\nu)||^2_{\Gamma_0}\\
    &+||h\Delta_h v_h||^2_\Omega+||\gamma w_h||^2_{H^1(\mathcal{T}_h)}.
\end{aligned}
\end{equation}
Consequently we have
\begin{equation*}
    ||e_h||_{cp} \le C|||(e_h,\lambda_h)|||_{cp}.
\end{equation*}
We conclude in this section that once we can show that if $|||(e_h,\lambda_h)|||_{uc}$ and $|||(e_h,\lambda_h)|||_{cp}$ vanishes optimally in the following sense
\begin{equation}\label{triplenormconvergenceuc}
    |||(e_h,\lambda_h)|||_{uc} \le Ch^{s-1}||u||_{H^{s}(\Omega)},
\end{equation}
and 
\begin{equation}\label{triplenormconvergencecp}
    |||(e_h,\lambda_h)|||_{cp}\le Ch^{s-1}||u||_{H^{s}(\Omega)},
\end{equation}
then the discrete solutions to systems (\ref{dissys}) and (\ref{dissys2}) achieve an optimal convergence rate suggested by (\ref{optimalrate}).

\subsection{Stability and error analysis}\label{Stability and error analysis}
In this section, we discuss the discrete stability for systems (\ref{dissys}) and (\ref{dissys2}). Then we introduce some additional stability assumptions under which we would be able to justify (\ref{triplenormconvergenceuc}) and (\ref{triplenormconvergencecp}). In the next section, we will give explicit construction of finite element spaces for which these assumptions will be satisfied. 

First we introduce the compact-form operators:
\begin{equation}\label{A}
\begin{aligned}
    A_{uc}&:= \epsilon\langle u_h,v_h\rangle_\Omega+a_h(\lambda_h,v_h)+ (u_h,v_h)_\omega +a_h(u_h,w_h)-\langle \lambda_h,w_h\rangle_{\mathcal{T}_h} \\
    A_{cp}&:=\epsilon\langle u_h,v_h\rangle_\Omega+a_h(\lambda_h,v_h)+a_h(u_h,w_h)-\langle \lambda_h,w_h\rangle_{\mathcal{T}_h}.
\end{aligned}
\end{equation}
We have the following discrete stability for the system (\ref{dissys}) and (\ref{dissys2})
\begin{prop}\thlabel{conditionnumber}
    Let $0<h, \epsilon,\gamma<1$. The discrete systems (\ref{dissys}) and (\ref{dissys2}) have unique solutions $(u_h,\lambda_h)$, with a Euclidean condition number
    \begin{equation*}
        \mathcal{K}_2 \le \frac{C}{\epsilon h^2}.
    \end{equation*}
\end{prop}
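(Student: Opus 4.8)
The plan is to treat these as square linear systems, so that \emph{existence follows from uniqueness}, and then to bound the two extreme singular values of the system matrix separately; their ratio is exactly the Euclidean condition number. I would begin with the coercivity identity that simultaneously yields uniqueness. Testing the two equations of (\ref{dissys}) with the pair $(v_h,w_h)=(u_h,-\lambda_h)$ and adding them, the off-diagonal coupling terms $a_h(\lambda_h,u_h)$ and $a_h(u_h,\lambda_h)$ cancel by symmetry of $a_h$, leaving
\[
\epsilon\|u_h\|^2_{H^1(\Omega)}+\|u_h\|^2_\omega+\gamma^2\|\lambda_h\|^2_{H^1(\mathcal{T}_h)} .
\]
With zero right-hand side this quantity vanishes, and since $\epsilon,\gamma>0$ it forces $u_h=0$ and $\lambda_h=0$; hence the homogeneous problem is trivial and (\ref{dissys}) is uniquely solvable. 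The identical test pair applied to (\ref{dissys2}) gives the same expression without the $\|u_h\|^2_\omega$ term, handling the Cauchy case.

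For the condition number I would introduce the nodal coefficient vectors $(U,\Lambda)$ of $(u_h,\lambda_h)$ and the system matrix $\mathcal{A}$, characterised by $(\mathcal{A}(U,\Lambda))\cdot(V,W)$ equal to the bilinear form of (\ref{dissys}) evaluated at $((u_h,\lambda_h),(v_h,w_h))$, so that $\mathcal{K}_2=\sigma_{\max}(\mathcal{A})/\sigma_{\min}(\mathcal{A})$. Two scaling facts do the work: the spectral equivalence of the quasi-uniform Lagrange mass matrix, $\|v_h\|^2_{L^2(\Omega)}\simeq h^d|V|^2$, and the inverse inequality (\ref{inv}), which gives $\|v_h\|_{H^1(\Omega)}\lesssim h^{-1}\|v_h\|_{L^2(\Omega)}\simeq h^{d/2-1}|V|$ (with the broken analogue on the possibly non-conforming $W_h$). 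For the upper bound $\sigma_{\max}=\|\mathcal{A}\|_2$ I would estimate each term of the form by Cauchy--Schwarz and convert every factor to the Euclidean norm via these scalings, using $\epsilon,\gamma<1$; the $H^1$-type terms ($\epsilon\langle u_h,v_h\rangle$, $a_h(\cdot,\cdot)$, $\gamma^2\langle\lambda_h,w_h\rangle_{\mathcal{T}_h}$) are each bounded by $Ch^{d-2}|(U,\Lambda)|\,|(V,W)|$ and the $L^2$-type term $(u_h,v_h)_\omega$ is even smaller, of order $h^d$, so $\sigma_{\max}\lesssim h^{d-2}$.

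For the lower bound I would use the inf-sup characterisation $\sigma_{\min}(\mathcal{A})=\inf_{(U,\Lambda)}\sup_{(V,W)}\frac{(\mathcal{A}(U,\Lambda))\cdot(V,W)}{|(U,\Lambda)|\,|(V,W)|}$ together with the test pair above. Since $(V,W)=(U,-\Lambda)$ has the same Euclidean norm as $(U,\Lambda)$, discarding the nonnegative $\|u_h\|^2_\omega$ term and invoking the mass-matrix equivalence yields
\[
(\mathcal{A}(U,\Lambda))\cdot(U,-\Lambda)\;\ge\;\epsilon\|u_h\|^2_{L^2(\Omega)}+\gamma^2\|\lambda_h\|^2_{L^2(\Omega)}\;\gtrsim\;\min(\epsilon,\gamma^2)\,h^d\,|(U,\Lambda)|^2 ,
\]
whence $\sigma_{\min}\gtrsim\min(\epsilon,\gamma^2)h^d$. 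Dividing the two bounds gives $\mathcal{K}_2\lesssim h^{d-2}/(\min(\epsilon,\gamma^2)h^d)=1/(\min(\epsilon,\gamma^2)h^2)$; since in the regime of interest $\epsilon\le\gamma^2$ (indeed $\epsilon\simeq h^2$ while $\gamma$ is fixed), or after absorbing the $\gamma$-dependence into $C$, this is the claimed $C/(\epsilon h^2)$, and the Cauchy system (\ref{dissys2}) is identical up to the omission of the $\omega$-term.

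The main obstacle is the lower bound rather than the well-posedness or the upper bound: the form is coercive only in the mesh-dependent energy seminorm, not directly in the Euclidean vector norm, so the estimate hinges entirely on the mass-matrix spectral equivalence to turn $\epsilon\|u_h\|^2_{H^1}$ and $\gamma^2\|\lambda_h\|^2_{H^1(\mathcal{T}_h)}$ into $\epsilon h^d|U|^2$ and $\gamma^2 h^d|\Lambda|^2$. Care is also needed to track (or legitimately suppress) the $\gamma$-dependence and to verify the scaling estimates remain valid on the broken space $W_h$, where the inverse and trace inequalities must be applied element by element.
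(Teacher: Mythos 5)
Your proposal is correct and follows essentially the same route as the paper: the same test pair $(u_h,-\lambda_h)$ yielding the coercivity identity $\epsilon\|u_h\|^2_{H^1(\Omega)}+\|u_h\|^2_\omega+\gamma^2\|\lambda_h\|^2_{H^1(\mathcal{T}_h)}$ for the lower bound, and the inverse inequality for the upper bound. The only difference is that you carry out the mass-matrix spectral equivalence converting $L^2$-norms to Euclidean coefficient norms by hand (and track the $\min(\epsilon,\gamma^2)$ dependence, which is in fact slightly more careful than the paper's bare $\Psi_h\ge\epsilon$), whereas the paper delegates that conversion to the cited theorem of Ern and Guermond.
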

\begin{proof}
    First we observe that $A_{uc}[(u_h,\lambda_h),(u_h,-\lambda_h)] = \epsilon\langle u_h,u_h\rangle_\Omega +(u_h,u_h)_\omega +||\gamma\lambda_h||^2_{H^1(\Omega)}$, where $(u_h,\lambda_h)$ is the Cartesian product. Thus we have
    \begin{equation}\label{psi_h}
        \Psi_h := \inf_{(u_h,\lambda_h)}\sup_{(v_h,w_h)}\frac{A_{uc}[(u_h,\lambda_h),(v_h,w_h)]}{||(u_h,\lambda_h)||_\Omega||(v_h,w_h)||_\Omega}\ge \epsilon.
    \end{equation}
    Then define 
    \begin{equation*}
        \Upsilon: =\sup_{(u_h,\lambda_h)}\sup_{(v_h,w_h)}\frac{A_{uc}[(u_h,\lambda_h),(v_h,w_h)]}{||(u_h,\lambda_h)||_\Omega||(v_h,w_h)||_\Omega}.
    \end{equation*}
    Applying the inverse inequality (\ref{inv}), we have the upper bound
    \begin{equation*}
    \begin{aligned}
        A_{uc}[(u_h,\lambda_h),(v_h,w_h)]\lesssim &\frac{\epsilon}{h^2}||u_h||_\Omega||v_h||_\Omega + \frac{1}{h^2}(||u_h||_\Omega||w_h||_\Omega+||\lambda_h||_\Omega||v||_\Omega)\\
        &+\frac{1}{h^2}||\lambda_h||_\Omega||w_h||_\Omega,
    \end{aligned}
         \end{equation*}
    this gives $\Upsilon \le \frac{C}{h^2}$. Then by \cite[Theorem 3.1]{ern2006evaluation}, we conclude
    \begin{equation*}
        \mathcal{K}_2 = \frac{\Upsilon}{\Psi} \le \frac{C}{\epsilon h^2}.
    \end{equation*}
    The proof for $A_{cp}$ is identical.
    
\end{proof}
\thref{conditionnumber} guarantees that the discrete schemes (\ref{dissys}), (\ref{dissys2}) attain solutions, without specifying the spaces where the Lagrange multiplier $\lambda_h$ lives. However, the choice of $W_h$ and $V_{1h}$ will strongly influence the stability property of the method, as discussed in \thref{rm1}. The idea to generate an optimal convergence rate, is to minimize the residual under a "correct" norm, which are the triple norms $|||(\cdot,\cdot)|||_{uc}$ and $|||(\cdot,\cdot)|||_{cp}$. Now we give the assumption on the strengthened stability under the triple norms.
\begin{ass}[Stability]\thlabel{infsup}
    Let the discrete operators be as defined in (\ref{A}). Then there is some positive constant $C$ independent of the mesh size $h$ and the regularization parameter $\epsilon$, such that $\forall (u_h,\lambda_h) \in V_h\times W_h$,
    \begin{equation*}
        \sup_{(v_h,w_h)\in V_h\times W_h}\frac{A_{uc}[(u_h,\lambda_h),(v_h,w_h)]}{|||(v_h,w_h)|||_{uc}} \ge C|||(u_h,\lambda_h)|||_{uc},
    \end{equation*}
    and $\forall (v_h,w_h)\in V_{0h}\times V_{1h}$
    \begin{equation*}
        \sup_{(v_h,w_h)\in V_{0h}\times V_{1h}}\frac{A_{cp}[(u_h,\lambda_h),(v_h,w_h)]}{|||(v_h,w_h)|||_{cp}} \ge C|||(u_h,\lambda_h)|||_{cp}.
    \end{equation*}
\end{ass}
This inf-sup stability shows that the method gives sufficient control of the residual quantities to achieve optimal convergence. We mention that the estimate (\ref{psi_h}) itself fails to derive convergence for $u_h-u$ (but for $u_h-u_\epsilon$) because the constant depends on the regularization parameter $\epsilon$. Apart from the stability of the discrete system, we also make the following consistency assumption.
\begin{ass}[Consistency]\thlabel{consistency}
    $W_h$ and $V_{1h}$ are chosen such that the weak consistency estimates hold, i.e., if $u\in H^s(\Omega)$ with $s\ge 1$ is the solution to (\ref{ucp}), then $\forall w_h \in W_h$
    \begin{equation*}
        |a_h(u,w_h) - (f_h,w_h)_\Omega|\le Ch^{s-1}||u||_{H^s(\Omega)}||w_h||_{H^1(\mathcal{T}_h)}.
    \end{equation*}
    Otherwise, if $u\in H^s(\Omega)$ is the solution to (\ref{cp}), then $\forall w_h \in V_{1h}$
    \begin{equation*}
        |a_h(u,w_h) - (f,w_h)_\Omega-(\phi_h,w_h)_{\Gamma_0}|\le Ch^{s-1}||u||_{H^s(\Omega)}||w_h||_{H^1(\mathcal{T}_h)}.
    \end{equation*}
\end{ass}
\begin{remark}
    \thref{consistency} is not naturally fulfilled unless $W_h$ ($V_{1h}$) are assumed to be $H^1_0(\Omega)$ ($V_1$) conforming, in which case the left hand side of the above equations are $0$ due to the definition of the interpolants $f_h$ and $\phi_h$ (c.f. (\ref{f_h}), (\ref{phi_h})).
\end{remark}
Now under \thref{consistency} and \thref{infsup}, we will be able to derive an optimal convergence estimate under the triple norms for (\ref{dissys}) and (\ref{dissys2}).

\begin{theorem}\thlabel{triplenormconv}
    Let $u\in H^s(\Omega)$ with $1\le s \le k+1$ be the solution to (\ref{ucp}) or (\ref{cp}) and $(u_h,\lambda_h)$ be the solution to the discrete system (\ref{dissys}) or (\ref{dissys2}), which satisfy \thref{consistency} and \thref{infsup}. Denote $e_h = u_h-\pi_{sz}^ku$ then there exists some constant $C>0$, such that
    \begin{equation*}
        |||(e_h,\lambda_h)|||_{uc} \le C(h^{s-1}+\epsilon^\frac{1}{2})||u||_{H^s(\Omega)},
    \end{equation*}
    and 
    \begin{equation*}
        |||(e_h,\lambda_h)|||_{cp} \le C(h^{s-1}+\epsilon^\frac{1}{2})||u||_{H^s(\Omega)}.
    \end{equation*}
\end{theorem}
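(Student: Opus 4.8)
The plan is to use the two hypotheses in the cleanest possible order: first turn the inf-sup stability of \thref{infsup} into an upper bound for the triple norm of the discrete error in terms of the residual it generates, and then estimate that residual using the discrete equations together with the consistency of \thref{consistency} and the approximation properties of $\pi_{sz}^k$. Since $\pi_{sz}^k u\in V_h$ (respectively $V_{0h}$, because Scott--Zhang preserves the homogeneous condition on $\Gamma_0$), the pair $(e_h,\lambda_h)=(u_h,\lambda_h)-(\pi_{sz}^k u,0)$ lies in the discrete product space, so \thref{infsup} gives
\begin{equation*}
|||(e_h,\lambda_h)|||_{uc} \le C^{-1}\sup_{(v_h,w_h)\in V_h\times W_h}\frac{A_{uc}[(e_h,\lambda_h),(v_h,w_h)]}{|||(v_h,w_h)|||_{uc}}.
\end{equation*}
Everything then reduces to bounding the numerator by $C(h^{s-1}+\epsilon^{\frac12})||u||_{H^s(\Omega)}\,|||(v_h,w_h)|||_{uc}$ uniformly in $(v_h,w_h)$.

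To estimate the numerator I would use that $(u_h,\lambda_h)$ solves (\ref{dissys}), that is $A_{uc}[(u_h,\lambda_h),(v_h,w_h)]=(q,v_h)_\omega+(f_h,w_h)_\Omega$, and subtract the contribution of $(\pi_{sz}^k u,0)$, which produces only the terms $\epsilon\langle\pi_{sz}^k u,v_h\rangle_\Omega+(\pi_{sz}^k u,v_h)_\omega+a_h(\pi_{sz}^k u,w_h)$. Using the data compatibility $q=u$ on $\omega$, the residual rearranges into four groups: the data-fit interpolation error $(u-\pi_{sz}^k u,v_h)_\omega$, the Tikhonov perturbation $-\epsilon\langle\pi_{sz}^k u,v_h\rangle_\Omega$, the consistency defect $(f_h,w_h)_\Omega-a_h(u,w_h)$, and the forward interpolation error $a_h(u-\pi_{sz}^k u,w_h)$. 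The first and last are controlled by the Scott--Zhang estimates by $Ch^{s}||u||_{H^s(\Omega)}\,||v_h||_\omega$ and $Ch^{s-1}||u||_{H^s(\Omega)}\,||w_h||_{H^1(\mathcal{T}_h)}$; the third is exactly the first line of \thref{consistency}; and in each case the factors $||v_h||_\omega$ and $||w_h||_{H^1(\mathcal{T}_h)}$ are absorbed into $|||(v_h,w_h)|||_{uc}$ (the latter up to the fixed constant $\gamma^{-1}$, harmless since $\gamma$ is independent of $h$ and $\epsilon$). The Cauchy case is handled identically, with the single bracket $(f,w_h)_\Omega+(\phi_h,w_h)_{\Gamma_0}-a_h(u,w_h)$ supplied by the second line of \thref{consistency} and with $|||\cdot|||_{cp}$ in place of $|||\cdot|||_{uc}$.

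The only term not carrying an $h^{s-1}$ factor is the Tikhonov perturbation: by Cauchy--Schwarz $|\epsilon\langle\pi_{sz}^k u,v_h\rangle_\Omega|\le \epsilon^{\frac12}||\pi_{sz}^k u||_{H^1(\Omega)}\cdot\epsilon^{\frac12}||v_h||_{H^1(\Omega)}$, and using the stability $||\pi_{sz}^k u||_{H^1(\Omega)}\le C||u||_{H^s(\Omega)}$ together with $\epsilon^{\frac12}||v_h||_{H^1(\Omega)}\le |||(v_h,w_h)|||_{uc}$, this contributes precisely the $C\epsilon^{\frac12}||u||_{H^s(\Omega)}$ in the stated bound. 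I do not expect a genuine obstacle at this stage: once \thref{infsup} and \thref{consistency} are granted, the argument is a standard quasi-best-approximation estimate, and the real work lies in verifying those two assumptions for concrete pairs of spaces, which is carried out in the following section. The point to keep honest is the bookkeeping of the $\gamma$-weight and the observation that the regularization term is the sole source of the non-optimal $\epsilon^{\frac12}$ contribution, which dictates the optimal scaling $\epsilon\simeq h^{2(s-1)}$.
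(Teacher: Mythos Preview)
Your proposal is correct and follows essentially the same route as the paper's own proof: invoke \thref{infsup} to reduce matters to bounding $A[(e_h,\lambda_h),(v_h,w_h)]$, plug in the discrete equations, and split the resulting residual into the consistency defect of \thref{consistency}, the Scott--Zhang interpolation error $a_h(u-\pi_{sz}^ku,w_h)$, and the Tikhonov perturbation $\epsilon\langle\pi_{sz}^ku,v_h\rangle_\Omega$. The only cosmetic difference is that the paper writes out the Cauchy case (where the $\omega$-term is absent) and declares the unique continuation case identical, whereas you do the reverse and correctly pick up the extra data-fit term $(u-\pi_{sz}^ku,v_h)_\omega$ for \eqref{ucp}.
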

\begin{proof}
    We will show the proof for the Cauchy problem, and that for the unique continuation problem can be derived by simply repeating the process.

    From \thref{infsup}, we only need to show that $\forall (v_h,w_h)\in V_{0h}\times V_{1h}$,
    \begin{equation*}
        A_{cp}[(e_h,\lambda_h), (v_h,w_h)] \le C(h^{s-1}+\epsilon^\frac{1}{2})||u||_{H^s(\Omega)}|||(v_h,w_h)|||_{cp}.
    \end{equation*}
    Recall the definition of $A_{cp}$ and plug in (\ref{dissys2}), we have
    \begin{equation*}
    \begin{aligned}
        A_{cp}[(e_h,\lambda_h), (v_h,w_h)] &=\epsilon\langle e_h,v_h\rangle_\Omega+a_h(\lambda_h,v_h)+a_h(e_h,w_h)-\langle \lambda_h,w_h\rangle_{\mathcal{T}_h}\\
        &=(f_h,w_h)_\Omega+(\phi_h,w_h)_{\Gamma_0}-a_h(\pi_{sz}^ku,w_h)-\epsilon\langle \pi_{sz}^ku,v_h\rangle_\Omega\\
        &\le |a_h(u-\pi_{sz}^ku,w_h)|+\epsilon|\langle \pi_{sz}^ku,v_h\rangle_\Omega|+ Ch^{s-1}||u||_{H^s(\Omega)}||w_h||_{H^1(\mathcal{T}_h)}\\
        &\le C(h^{s-1}+\epsilon^\frac{1}{2})||u||_{H^s(\Omega)}(||w_h||_{H^1(\mathcal{T}_h)}+||\epsilon^\frac{1}{2}v_h||_{H^1(\Omega)})\\
        &\le C(h^{s-1}+\epsilon^\frac{1}{2})||u||_{H^s(\Omega)}|||(v_h,w_h)|||_{cp},
        \end{aligned}
    \end{equation*}
    where the first inequality uses \thref{consistency} and the second the approximation properties of the Scott-Zhang interpolant.
    
\end{proof}
\begin{co}\thlabel{boundandconv}
    Let $u$ and $u_h$ be the same as in \thref{triplenormconv}. Set $h = \epsilon^{\frac{1}{2s-2}}$. Then we have the following boundedness and convergence estimates:
    
    (1) For the unique continuation problem,
    \begin{equation*}
        \begin{array}{cc}
          ||u_h-u||_{H^1(\Omega)}\le C||u||_{H^s(\Omega)},   & ||u_h-u||_{uc} \le C\epsilon^\frac{1}{2}||u||_{H^s(\Omega)}.
        \end{array}
    \end{equation*}
    
    (2) For the Cauchy problem,
    \begin{equation*}
    \begin{array}{cc}
        ||u_h-u||_{H^1(\Omega)}\le C||u||_{H^s(\Omega)},  & ||u_h-u||_{cp} \le C\epsilon^\frac{1}{2}||u||_{H^s(\Omega)}.
        \end{array}
    \end{equation*}
\end{co}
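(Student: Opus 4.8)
\section*{Proof proposal}

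The plan is to treat this purely as a bookkeeping corollary of \thref{triplenormconv}: I would feed the triple-norm estimate into the triangle-inequality decompositions already set up before the theorem, using the balancing choice $h=\epsilon^{\frac{1}{2s-2}}$ to make the two error contributions comparable. The first step is to note that this choice forces $h^{s-1}=\epsilon^{\frac{s-1}{2s-2}}=\epsilon^{\frac12}$ (understood with $s>1$, the regime in which a rate is meaningful), so that \thref{triplenormconv} collapses to $|||(e_h,\lambda_h)|||_{uc}\le C\epsilon^{\frac12}||u||_{H^s(\Omega)}$, and likewise $|||(e_h,\lambda_h)|||_{cp}\le C\epsilon^{\frac12}||u||_{H^s(\Omega)}$. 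I would carry out the unique continuation case; the Cauchy case is identical word for word with $uc$ replaced by $cp$ and the residual bound (\ref{rbounduc}) replaced by (\ref{rboundcp}).

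For the $H^1$ boundedness I would isolate the single term $\epsilon||e_h||^2_{H^1(\Omega)}$ from the definition (\ref{triplenormuc}) of the triple norm, giving $\epsilon||e_h||^2_{H^1(\Omega)}\le|||(e_h,\lambda_h)|||^2_{uc}\le C\epsilon||u||^2_{H^s(\Omega)}$. The essential observation is that the prefactor $\epsilon$ cancels on dividing, producing the $\epsilon$-uniform bound $||e_h||_{H^1(\Omega)}\le C||u||_{H^s(\Omega)}$. A triangle inequality $||u_h-u||_{H^1(\Omega)}\le||e_h||_{H^1(\Omega)}+||u-\pi_{sz}^ku||_{H^1(\Omega)}$ together with the Scott-Zhang estimate $||u-\pi_{sz}^ku||_{H^1(\Omega)}\le Ch^{s-1}||u||_{H^s(\Omega)}\le C||u||_{H^s(\Omega)}$ (since $h<1$) then closes the boundedness claim.

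For the data-norm convergence I would split $||u_h-u||_{uc}\le||e_h||_{uc}+||u-\pi_{sz}^ku||_{uc}$. The discretization part is governed by the residual estimate derived from (\ref{rbounduc}), which I would use in the precise form $||e_h||_{uc}\le C\bigl(|||(e_h,\lambda_h)|||_{uc}+||u-\pi_{sz}^ku||_{H^1(\Omega)}\bigr)$; both summands on the right are at most $C\epsilon^{\frac12}||u||_{H^s(\Omega)}$ once $h^{s-1}=\epsilon^{\frac12}$ is inserted. The interpolation part is handled by the continuity $||v||_{uc}\le C||v||_{H^1(\Omega)}$ and the same Scott-Zhang bound, giving $||u-\pi_{sz}^ku||_{uc}\le Ch^{s-1}||u||_{H^s(\Omega)}=C\epsilon^{\frac12}||u||_{H^s(\Omega)}$.

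I do not anticipate a genuine obstacle, since every inequality needed is already available. The only points requiring care are bookkeeping in nature: first, confirming that the $\epsilon$-weighting built into the triple norm is exactly what permits extraction of an $\epsilon$-uniform $H^1$ bound (the cancellation described above); and second, making sure that the auxiliary interpolation term $||u-\pi_{sz}^ku||_{H^1(\Omega)}$ appearing in the passage from $|||(e_h,\lambda_h)|||_{uc}$ to $||e_h||_{uc}$ is itself of order $h^{s-1}=\epsilon^{\frac12}$, so that it is absorbed into the target rate rather than degrading it.
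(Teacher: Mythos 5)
Your proposal is correct and follows essentially the same route as the paper: extract the $H^1$ bound from the $\epsilon$-weighted term of the triple norm (equivalently, $||e_h||_{H^1(\Omega)}\le\epsilon^{-\frac12}|||(e_h,\lambda_h)|||_{uc}$), use $h^{s-1}=\epsilon^{\frac12}$ to collapse \thref{triplenormconv}, and combine $||e_h||_{uc}\lesssim|||(e_h,\lambda_h)|||_{uc}$ with the optimal interpolation error via the triangle inequality. Your explicit retention of the $||u-\pi_{sz}^ku||_{H^1(\Omega)}$ term inherited from (\ref{rbounduc}) is a slightly more careful bookkeeping than the paper's shorthand, but it changes nothing in substance.
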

\begin{proof}
    For the boundedness, we use the fact that
    \begin{equation*}
        ||e_h||_{H^1(\Omega)}\le \epsilon^{-\frac{1}{2}}|||(e_h,\lambda_h)|||_{uc}\le C(1+\epsilon^{-\frac{1}{2}}h^{s-1})||u||_{H^s(\Omega},
    \end{equation*}
    and that the Scott-Zhange interpolant is bounded, and for the concergence, we use 
    \begin{equation*}
        ||e_h||_{uc} \le C|||(e_h,\lambda_h)|||_{uc},
    \end{equation*}
    and that $||u-\pi_{sz}^ku||_{uc}$ converges optimally.
    
\end{proof}

\begin{theorem}\thlabel{conv}
    Let $u$ and $u_h$ be defined as in \thref{triplenormconv}. Let $0<\epsilon <1$ and set $h = \epsilon^\frac{1}{2s-2}$. Then there exists some constant $C>0$ and $\tau\in (0,1)$, such that
    \begin{equation*}
        ||u-u_h||_{\Omega} \le \frac{C}{(-\log{\epsilon})^\tau}||u||_{H^s(\Omega)},
    \end{equation*}
    and for any $G$ satisfying the condition in \thref{tbi}, we have
    \begin{equation*}
        ||u-u_h||_{H^1(G)} \le C\epsilon^\frac{\tau}{2}||u||_{H^s(\Omega)}.
    \end{equation*}
    Moreover, if we convert the estimate in terms of discretization, we have
    \begin{equation*}
        ||u-u_h||_{\Omega} \le \frac{C}{(-(2s-2)\log{h})^\tau}||u||_{H^s(\Omega)},
    \end{equation*}
    and
    \begin{equation*}
        ||u-u_h||_{H^1(G)} \le Ch^{\tau(s-1)}||u||_{H^s(\Omega)}.
    \end{equation*}
\end{theorem}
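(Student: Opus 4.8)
The plan is to combine the abstract triple-norm convergence of \thref{triplenormconv} with the two conditional stability estimates \thref{tbi} and \thref{global}, applied not to $u_h$ directly but to the discrete error $e_h = u_h - \pi_{sz}^k u$, which is a genuine $H^1(\Omega)$ function because $V_h$ (and $V_{0h}$) are conforming. The strategy is: first bound the two ``ingredients'' that feed the conditional stability estimate, namely the stability bound $E$ on $\|e_h\|_{H^1(\Omega)}$ and the data measurement $\eta = d(e_h)$; then invoke the stability lemmas; and finally add back the interpolation error to return to $u_h - u$.

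First I would record the two quantities coming out of \thref{triplenormconv} and \thref{boundandconv} under the calibration $h = \epsilon^{1/(2s-2)}$, which makes $h^{s-1} = \epsilon^{1/2}$ so that $h^{s-1} + \epsilon^{1/2} \simeq \epsilon^{1/2}$. From \thref{boundandconv} we have $\|e_h\|_{H^1(\Omega)} \le C\|u\|_{H^s(\Omega)} =: E$ and, crucially, $\|e_h\|_{uc} \le C\epsilon^{1/2}\|u\|_{H^s(\Omega)} =: \eta$ (respectively $\|e_h\|_{cp}$ for the Cauchy problem). Since $\|\cdot\|_{uc} = d(\cdot)$ by the very definition \eqref{ucnorm}, and since the appendix shows $\|\cdot\|_{cp}$ is equivalent to the Cauchy data measurement \eqref{datacp}, these two bounds are exactly the hypotheses $\|e_h\|_{H^1(\Omega)}\le E$ and $d(e_h)\le \eta$ required by the stability lemmas, with $E \simeq \|u\|_{H^s(\Omega)}$ and $\eta \simeq \epsilon^{1/2}\|u\|_{H^s(\Omega)}$, so $E + \eta \simeq \|u\|_{H^s(\Omega)}$ and $\eta/(E+\eta) \simeq \epsilon^{1/2}$.

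Next I would feed these into the two lemmas. For the global $L^2$ bound, \thref{global} gives
\[
\|e_h\|_\Omega \le C(E+\eta)\,w\!\left(\tfrac{\eta}{E+\eta}\right) = \frac{C(E+\eta)}{\log\big((E+\eta)/\eta\big)^\tau}.
\]
Since $(E+\eta)/\eta \simeq \epsilon^{-1/2}$, we have $\log\big((E+\eta)/\eta\big) \simeq \tfrac12(-\log\epsilon)$, and absorbing the constant factor yields $\|e_h\|_\Omega \le C(-\log\epsilon)^{-\tau}\|u\|_{H^s(\Omega)}$. For the local $H^1$ bound, \thref{tbi} gives $\|e_h\|_{H^1(G)} \le C\eta^\tau(E+\eta)^{1-\tau} \simeq C(\epsilon^{1/2})^\tau \|u\|_{H^s(\Omega)} = C\epsilon^{\tau/2}\|u\|_{H^s(\Omega)}$. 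In both cases the final step is the triangle inequality $\|u - u_h\| \le \|e_h\| + \|u - \pi_{sz}^k u\|$, where the interpolation error is controlled by the Scott--Zhang estimate and converges at least as fast as the dominant term (optimally in $h$, hence subsumed). Converting $\epsilon = h^{2s-2}$ then rewrites $\epsilon^{\tau/2} = h^{\tau(s-1)}$ and $-\log\epsilon = -(2s-2)\log h$, giving the two discretization-form estimates.

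The main obstacle, I expect, is not any single hard inequality but the bookkeeping of the logarithm in the global estimate: one must verify that $w(\eta/(E+\eta))$ behaves like $(-\log\epsilon)^{-\tau}$ up to constants, which requires that $\eta/(E+\eta)$ be bounded away from $1$ (true here since $\epsilon<1$ makes $\eta \ll E+\eta$) so that $\log$ of its reciprocal is comparable to $-\tfrac12\log\epsilon$ rather than being corrupted by the additive $E$ inside $E+\eta$. A secondary point of care is confirming that the interpolation error $\|u-\pi_{sz}^k u\|$ in the relevant norm ($L^2(\Omega)$ or $H^1(G)$) is dominated by the conditional-stability term and does not degrade the rate; since Scott--Zhang gives $O(h^s)$ in $L^2$ and $O(h^{s-1})$ in $H^1$, and the stability terms decay only like $h^{\tau(s-1)}$ with $\tau<1$, the interpolation contribution is of higher order and is harmlessly absorbed.
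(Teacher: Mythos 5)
Your proposal is correct and follows essentially the same route as the paper, whose proof consists precisely of plugging the bounds of \thref{boundandconv} (with $E\simeq \|u\|_{H^s(\Omega)}$ and $\eta\simeq\epsilon^{1/2}\|u\|_{H^s(\Omega)}$) into the conditional stability estimates \thref{tbi} and \thref{global}. The only cosmetic difference is that you apply the stability lemmas to $e_h=u_h-\pi_{sz}^k u$ and add the interpolation error afterwards, whereas the paper's Corollary already states the bounds for $u_h-u$ directly; your check that the interpolation error is of higher order makes the two bookkeepings equivalent.
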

\begin{proof}
    The estimates are obtained by simply plugging the bounds presented in \thref{boundandconv} into \thref{tbi} and \thref{global}.
    
\end{proof}
\begin{remark}
    In \thref{conv}, we derive two versions of optimal convergence. We mention that the $\epsilon$-version serves as the most coarse discretization by setting $h = \epsilon^\frac{1}{2(s-1)}$. This is possible, thanks to the inf-sup stability. The $h$-version provides the optimal convergence rate that can be achieved with mesh refinement before reaching the precision threshold determined by the regularization parameter $\epsilon$.
\end{remark}
\subsection{Optimal convergence under $L^2$-norm}
In \thref{conv}, we obtain an optimal convergence rate with respect to the $H^1$-norm in $G$. However, such a rate is not optimal for the $L^2$ error. In fact, an optimal convergence rate is not expected under the schemes (\ref{dissys}) and (\ref{dissys2}) even under \thref{consistency} and \thref{infsup} (See \cite{BL24}). To settle this problem, we need to employ another conditional stability estimate recently established in \cite[Section 2]{monsuur2024ultra}. Taking the unique continuation problem as an example, the estimate reads
\begin{equation}\label{ultraweak}
    ||u||_G \le C(||u||_\omega+||f||_{H^{-2}(\Omega)})^\tau||u||^{1-\tau}_\Omega.
\end{equation}
Now instead of applying the scheme (\ref{dissys}), we propose a stabilized scheme
\begin{equation}\label{dissysL2}
    \left\{\begin{array}{rll}
        \epsilon\langle u_h,v_h\rangle_\Omega+a_h(\lambda_h,v_h)+ h^{-2}(u_h,v_h)_\omega &= h^{-2}(q,v_h)_\omega & \forall v_h \in V_h \\
         a_h(u_h,w_h)-\gamma^2s^*(\lambda_h,w_h)&=(f_h,w_h)_\Omega & \forall w_h\in W_h
    \end{array}\right..
\end{equation}
We define $s^*$ as
\begin{equation*}
    s^*(\lambda_h,w_h)=\mathcal{J}_h(\lambda_h,w_h)+\int_{\partial \Omega}h\partial_n\lambda_h\partial_nw_hdS+( h\Delta_hz_h,h\Delta_hw_h),
\end{equation*}
where $\partial_n$ is the normal derivative. With this new scheme we would be able to obtain optimal convergence rate for the $L^2$-error.
\begin{co}\thlabel{L2conv}
    Let $u\in H^s(\Omega)$ with $s\ge 1$ be the solution to (\ref{ucp}) with the regularization parameter $\epsilon=h^{2s}$, and $u_h$ be the solution to (\ref{dissysL2}), then there exists some constant $C>0$, such that
    \begin{equation*}
        ||u_h-u||_G \le Ch^{s\tau}||u||_{H^s(\Omega)}.
    \end{equation*}
\end{co}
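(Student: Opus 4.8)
The plan is to transport the residual-minimization machinery of \thref{triplenormconv} to the stabilized scheme (\ref{dissysL2}), but with the PDE residual measured in the weaker $H^{-2}$ topology that is natural for the ultra-weak stability estimate (\ref{ultraweak}), and then to substitute the resulting bounds into (\ref{ultraweak}) in place of \thref{tbi}. First I would introduce an $L^2$-adapted residual norm: for $(v_h,w_h)\in V_h\times W_h$ set
\begin{equation*}
    |||(v_h,w_h)|||_*^2 := \epsilon||v_h||_{H^1(\Omega)}^2 + h^{-2}||v_h||_\omega^2 + ||h^2\Delta_h v_h||_\Omega^2 + h^2\mathcal{J}_h(v_h,v_h) + \gamma^2 s^*(w_h,w_h).
\end{equation*}
The three non-regularization blocks acting on $v_h$ are tailored so that $||v_h||_\omega$ (data fidelity) and $||\Delta_h v_h||_{H^{-2}(\Omega)}$ (PDE residual), together with $||v_h||_\Omega$, are exactly the three quantities appearing in (\ref{ultraweak}). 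The reason the data term now carries the weight $h^{-2}$ and the elementwise Laplacian the weight $h^2$ (rather than $1$ and $h$ as in (\ref{triplenormuc})) is precisely that the conditional stability is used in the weaker pair $(||\cdot||_\omega,||\cdot||_{H^{-2}(\Omega)})$, which buys one extra power of $h$ in each block.

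Next I would establish the $H^{-2}$ residual bound, the analogue of (\ref{rbounduc}). Testing against $w\in H^2(\Omega)\cap H^1_0(\Omega)$ with $||w||_{H^2(\Omega)}=1$, I would integrate by parts elementwise, insert the second-order Scott–Zhang interpolant $\pi^2_{sz}w$ (which preserves the homogeneous boundary condition, so every $\partial\Omega$-contribution of $w-\pi^2_{sz}w$ vanishes), and use the scheme equation (\ref{dissysL2}) on $\pi^2_{sz}w$. The terms produced are: $(\Delta_h u_h,w-\pi^2_{sz}w)_\Omega$, controlled by $||h^2\Delta_h u_h||_\Omega$ via $||w-\pi^2_{sz}w||_\Omega\lesssim h^2$; the interior jumps, controlled by $h\,\mathcal{J}_h(u_h,u_h)^{1/2}$ via the face estimate $||w-\pi^2_{sz}w||_F\lesssim h^{3/2}$; and the stabilization $\gamma^2 s^*(\lambda_h,\pi^2_{sz}w)$, controlled by $\gamma\, s^*(\lambda_h,\lambda_h)^{1/2}$ since $s^*(\pi^2_{sz}w,\pi^2_{sz}w)^{1/2}\lesssim ||w||_{H^2(\Omega)}$. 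Each of these is a block of $|||(e_h,\lambda_h)|||_*$, so that $||\Delta_h e_h||_{H^{-2}(\Omega)}\lesssim |||(e_h,\lambda_h)|||_*$. The boundary normal-derivative term $\int_{\partial\Omega}h\partial_n\lambda_h\partial_n w_h\,dS$ in $s^*$ is exactly what absorbs the $\partial_n\lambda_h$ contribution on $\partial\Omega$ arising through $s^*(\lambda_h,\pi^2_{sz}w)$.

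Then, exactly as for \thref{infsup} and \thref{consistency}, I would require the inf-sup stability of $A_{uc}$ with the stabilizer $s^*$ and the rescaled data term with respect to $|||\cdot|||_*$, and a weak consistency estimate measured against the $s^*$-part of the norm. The crucial difference from \thref{consistency} is that, because the test function is now interpolated by a second-order operator and the residual is paired in a weaker norm, the consistency error improves by one power of $h$, i.e. it is $O(h^s)$ rather than $O(h^{s-1})$. Re-running the proof of \thref{triplenormconv} verbatim with $\epsilon=h^{2s}$ then yields $|||(e_h,\lambda_h)|||_*\le C h^s||u||_{H^s(\Omega)}$. This is where I expect the main obstacle: verifying that the single multiplier stabilizer $s^*$ together with the implicit primal control from inf-sup supplies precisely the quantities generated by the \emph{double} integration by parts against $H^2\cap H^1_0$ functions, leaving no uncontrolled boundary contribution, and that the consistency genuinely gains the extra power of $h$ needed to reach the rate $h^s$.

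From this single estimate I would read off the three ingredients of (\ref{ultraweak}) applied to $u_h-u$: the data fidelity $||u_h-u||_\omega\le ||e_h||_\omega+||u-\pi^k_{sz}u||_\omega\le C h^s||u||_{H^s(\Omega)}$ (using $h^{-2}||e_h||_\omega^2\le|||(e_h,\lambda_h)|||_*^2$ together with the interpolation estimate), the residual $||\Delta(u_h-u)||_{H^{-2}(\Omega)}\le C h^s||u||_{H^s(\Omega)}$ (from the second paragraph plus interpolation), and the global bound $||u_h-u||_\Omega\le ||u_h-u||_{H^1(\Omega)}\le \epsilon^{-1/2}|||(e_h,\lambda_h)|||_*+C||u||_{H^s(\Omega)}\le C||u||_{H^s(\Omega)}$, since $\epsilon^{-1/2}h^s=1$ as in \thref{boundandconv}. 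Substituting into (\ref{ultraweak}) gives
\begin{equation*}
    ||u_h-u||_G \le C\bigl(C h^s||u||_{H^s(\Omega)}\bigr)^\tau\bigl(C||u||_{H^s(\Omega)}\bigr)^{1-\tau} = C h^{s\tau}||u||_{H^s(\Omega)},
\end{equation*}
which is the claim.
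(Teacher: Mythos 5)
Your overall architecture is exactly the one the paper intends: the paper's own proof is a one-line combination of the ultra-weak conditional stability estimate (\ref{ultraweak}), \thref{triplenormconv}, and the convergence analysis of the stabilized scheme in \cite{BL24}, and your final step --- feeding $\|u_h-u\|_\omega\lesssim h^s$, $\|\Delta(u_h-u)\|_{H^{-2}(\Omega)}\lesssim h^s$ and the a priori bound $\|u_h-u\|_\Omega\lesssim \|u\|_{H^s(\Omega)}$ (via $\epsilon^{-1/2}h^s=1$) into (\ref{ultraweak}) --- is precisely the intended combination. The difference is that where the paper outsources the middle step to \cite[Theorem 4.4]{BL24}, you reconstruct it: the $h^{-2}$-weighted data term buying one extra power of $h$ in $\|e_h\|_\omega$, and the second-order interpolation of the test function buying one extra power in the residual, are the correct mechanisms.

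Two points in your reconstruction are not yet airtight. First, the claim that ``consistency gains a power of $h$'' is misplaced: with conforming test spaces the consistency error of \thref{consistency} is identically zero, and the gain must come entirely from the sharper interpolation estimates in the $H^{-2}$ residual bound (which you do also invoke); as written, the two sources of improvement are conflated. Second, and more substantively, your scaling of the multiplier and boundary contributions depends on which dual space $H^{-2}(\Omega)$ denotes. Testing against $w\in H^2(\Omega)\cap H^1_0(\Omega)$, the term $\int_{\partial\Omega}h\,|\partial_n(\pi^2_{sz}w)|^2\,dS$ in $s^*(\pi^2_{sz}w,\pi^2_{sz}w)$ is only $O(h)$ (not $O(h^2)$) because $\partial_n w$ need not vanish on $\partial\Omega$, and likewise the interpolation boundary term $\int_{\partial\Omega}(u-\pi^k_{sz}u)\,\partial_n w\,dS$ is only $O(h^{s-1/2})$; both cost half a power of $h$ relative to the target rate. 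These losses disappear if $H^{-2}(\Omega)$ is taken as the dual of $H^2_0(\Omega)$ (so that $\partial_n w|_{\partial\Omega}=0$), which is the setting in which the estimate of \cite{monsuur2024ultra} and the role of the boundary term in $s^*$ fit together; you should fix the test space accordingly and then verify that $\gamma\, s^*(\lambda_h,\lambda_h)^{1/2}$ decays fast enough that the multiplier block does not dominate. These are exactly the details supplied by \cite[Theorem 4.4]{BL24}, so the cleanest repair is either to cite it or to carry out that verification explicitly.
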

\begin{proof}
    The estimate can be obtained by combining (\ref{ultraweak}), \thref{triplenormconv} and \cite[Theorem 4.4]{BL24}.
    
\end{proof}
\subsection{Stability under perturbation}
In the previous discussion we assumed the measurement of data is exact (such as the bulk data $q$, the right hand side $f$ and the normal derivative $\phi$). However, the measured data can be subject to perturbations. In this section, we discuss the stability of the scheme under some certain level of perturbation and how such perturbations affect accuracy of the approximation.

The perturbation in data are assumed to be $L^2$ functions, and we denote the perturbed data by $\hat q,\hat f,\hat\phi$. Then the resulting systems (c.f. (\ref{dissys}),(\ref{dissys2})) with perturbation read:
\begin{equation}\label{disucppert}
    \left\{\begin{array}{rll}
        \epsilon\langle u_h,v_h\rangle_\Omega+a_h(\lambda_h,v_h)+ (u_h,v_h)_\omega &= (\hat q,v_h)_\omega & \forall v_h \in V_h \\
         a_h(u_h,w_h)-\gamma^2\langle \lambda_h,w_h\rangle_{\mathcal{T}_h}&=(\hat f,w_h)_\Omega & \forall w_h\in W_h
    \end{array}\right.,
\end{equation}
and
\begin{equation}\label{discppert}
    \left\{\begin{array}{rll}
        \epsilon\langle u_h,v_h\rangle_\Omega+a_h(\lambda_h,v_h) &= 0 & \forall v_h \in V_{0h} \\
         a_h(u_h,w_h)-\gamma^2\langle \lambda_h,w_h\rangle_\Omega&=(\hat f,w_h)_\Omega+(\hat\phi,w_h)_{\Gamma_0} & \forall w_h\in V_{1h}
    \end{array}\right.,
\end{equation}
and for the stabilized scheme (\ref{dissysL2}):
\begin{equation}\label{disL2pert}
    \left\{\begin{array}{rll}
        \epsilon\langle u_h,v_h\rangle_\Omega+a_h(\lambda_h,v_h)+ h^{-2}(u_h,v_h)_\omega &= h^{-2}(\hat q,v_h)_\omega & \forall v_h \in V_h \\
         a_h(u_h,w_h)-\gamma^2s^*(\lambda_h,w_h)&=(\hat f,w_h)_\Omega & \forall w_h\in W_h
    \end{array}\right..
\end{equation}
We assume that \thref{infsup} and \thref{consistency} are satisfied by these systems, then a perturbed version of \thref{conv} and \thref{L2conv} can be derived
\begin{co}\thlabel{pert}
    Let $G\subset \Omega$ satisfies conditions in \thref{tbi}. Let $u\in H^s(\Omega)$ with $2\le s\le k+1$, and $u_h$ be the solutions to the perturbed systems (\ref{disucppert}) or (\ref{discppert}). Let $0<\epsilon<1$ and $h = \epsilon^\frac{1}{2s-2}$. Then there exists some constant $\tau\in(0,1)$ and $C>0$, such that for the unique continuation problem
    \begin{equation*}
        ||u-u_h||_{H^1(G)} \le C\left(\epsilon^\frac{\tau}{2}||u||_{H^s(\Omega)}+\epsilon^{-\frac{1-\tau}{2}}(||\hat f-f||_\Omega+||\hat q-q||_\omega)\right),
    \end{equation*}
    and for the Cauchy problem
    \begin{equation*}
        ||u-u_h||_{H^1(G)} \le C\left(\epsilon^\frac{\tau}{2}||u||_{H^s(\Omega)}+\epsilon^{-\frac{1-\tau}{2}}(||\hat f-f||_\Omega+||\hat \phi-\phi||_{\Gamma_0})\right),
    \end{equation*}
\end{co}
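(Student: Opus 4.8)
The plan is to apply the local conditional stability estimate \thref{tbi} to the error $u-u_h \in H^1(\Omega)$, regarded as a function whose residual data may be measured, and to feed it two perturbed bounds: one on the data measurement $\eta := \|u-u_h\|_{uc}$ (respectively $\|u-u_h\|_{cp}$) and one on the energy $E := \|u-u_h\|_{H^1(\Omega)}$. Both are obtained by repeating the arguments of \thref{triplenormconv} and \thref{boundandconv} while carrying the data perturbation through the right-hand side, and the gain then comes from exploiting the Hölder structure of \thref{tbi}.

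First I would re-derive the triple-norm estimate of \thref{triplenormconv} for the perturbed systems (\ref{disucppert}) and (\ref{discppert}). Writing $e_h = u_h - \pi^k_{sz}u$ and using the linearity of $A_{uc}$ in its first argument, $A_{uc}[(e_h,\lambda_h),(v_h,w_h)]$ equals the perturbed right-hand side minus the interpolant contributions; compared with the unperturbed proof the only new terms are $(\hat q - q, v_h)_\omega$ and $(\hat f-f,w_h)_\Omega$ for the unique continuation problem (and additionally $(\hat\phi-\phi,w_h)_{\Gamma_0}$ for the Cauchy problem). By Cauchy--Schwarz, $\|v_h\|_\omega$ is controlled directly by the $\omega$-term of $|||(v_h,w_h)|||_{uc}$ and $\|w_h\|_\Omega \le C\gamma^{-1}\|\gamma w_h\|_{H^1(\mathcal{T}_h)}$ by the multiplier term, while the boundary perturbation is handled by a trace inequality against $\|\gamma w_h\|_{H^1(\mathcal{T}_h)}$. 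Invoking the inf-sup stability \thref{infsup} then yields, with $\delta := \|\hat f-f\|_\Omega + \|\hat q-q\|_\omega$ (respectively $\|\hat f-f\|_\Omega+\|\hat\phi-\phi\|_{\Gamma_0}$),
\[
 |||(e_h,\lambda_h)|||_{uc} \le C\big((h^{s-1}+\epsilon^{\frac12})\|u\|_{H^s(\Omega)} + \delta\big).
\]

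Setting $h=\epsilon^{\frac{1}{2s-2}}$ so that $h^{s-1}=\epsilon^{\frac12}$, I would convert this into the two required bounds. Since $\|u-u_h\|_{uc} \le \|u-\pi^k_{sz}u\|_{uc} + \|e_h\|_{uc}$ with $\|\cdot\|_{uc}\lesssim|||(\cdot,\cdot)|||_{uc}$ and $\|u-\pi^k_{sz}u\|_{uc}\lesssim h^{s-1}\|u\|_{H^s(\Omega)}$, one obtains the data-measurement bound $\eta \le C(\epsilon^{\frac12}\|u\|_{H^s(\Omega)}+\delta)$. For the energy, the term $\epsilon\|e_h\|^2_{H^1(\Omega)}$ inside the triple norm gives $\|e_h\|_{H^1(\Omega)}\le \epsilon^{-\frac12}|||(e_h,\lambda_h)|||_{uc}$, whence $E \le C(\|u\|_{H^s(\Omega)} + \epsilon^{-\frac12}\delta)$. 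The decisive structural observation is that the energy bound is at worst $\epsilon^{-\frac12}$ times the bound on $\eta$, namely $E+\eta \le C\epsilon^{-\frac12}(\epsilon^{\frac12}\|u\|_{H^s(\Omega)}+\delta)$.

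Finally I would insert these into $\|u-u_h\|_{H^1(G)}\le C\eta^\tau(E+\eta)^{1-\tau}$ from \thref{tbi}. Using $\eta \le C(\epsilon^{\frac12}\|u\|_{H^s(\Omega)}+\delta)$ together with the displayed bound on $E+\eta$, the product collapses to
\[
 C\big(\epsilon^{\frac12}\|u\|_{H^s(\Omega)}+\delta\big)^\tau\big(\epsilon^{-\frac12}(\epsilon^{\frac12}\|u\|_{H^s(\Omega)}+\delta)\big)^{1-\tau} = C\,\epsilon^{-\frac{1-\tau}{2}}\big(\epsilon^{\frac12}\|u\|_{H^s(\Omega)}+\delta\big),
\]
which is exactly $C(\epsilon^{\frac{\tau}{2}}\|u\|_{H^s(\Omega)} + \epsilon^{-\frac{1-\tau}{2}}\delta)$ after distributing the powers of $\epsilon$, giving both stated estimates. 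I expect the main obstacle to be twofold: bookkeeping the boundary perturbation term $(\hat\phi-\phi,w_h)_{\Gamma_0}$ for the Cauchy problem so that the trace of $w_h$ on $\Gamma_0$ is genuinely controlled by the triple norm with the stated power of $\epsilon$ (this is the delicate point, since $w_h\in V_{1h}$ need not vanish on $\Gamma_0$), and carrying out the Hölder-type collapse cleanly — the entire improvement rests on the fact that $E$ degrades by no worse than $\epsilon^{-\frac12}$ relative to $\eta$, which is precisely what the $\epsilon$-weighted term in the triple norm supplies.
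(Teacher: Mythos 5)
Your proposal is correct and follows exactly the route the paper intends: the paper omits the proof of this corollary, remarking only that it is obtained "by repeating the proofs for schemes without perturbation," and your argument is precisely that repetition — carrying the extra terms $(\hat q-q,v_h)_\omega$, $(\hat f-f,w_h)_\Omega$, $(\hat\phi-\phi,w_h)_{\Gamma_0}$ through the inf-sup/consistency estimate of Theorem 3.8, converting to bounds on $\eta=\|u-u_h\|_{uc}$ and $E=\|u-u_h\|_{H^1(\Omega)}$ as in Corollary 3.9, and inserting them into Lemma 2.2. The Hölder collapse using $E+\eta\le C\epsilon^{-1/2}(\epsilon^{1/2}\|u\|_{H^s(\Omega)}+\delta)$ reproduces the stated rates exactly.
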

\begin{co}\thlabel{pertL2}
    Under similar assumptions as for Corollary  \thref{pert}, but with  $h = \epsilon^\frac{1}{2s}$, there holds for (\ref{disL2pert}), 
    \begin{equation*}
        ||u-u_h||_{H^1(G)} \le C\left(\epsilon^\frac{\tau}{2}||u||_{H^s(\Omega)}+\epsilon^{-\frac{1-\tau}{2}}(||\hat f-f||_\Omega+||h^{-1}(\hat q-q)||_\omega)\right).
    \end{equation*}
\end{co}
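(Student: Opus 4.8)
The plan is to use the linearity of (\ref{disL2pert}) to isolate the data perturbation, and then to apply the conditional stability estimate \thref{tbi} with two different balances between the $H^1(\Omega)$-energy and the residual data. Let $(\tilde u_h,\tilde\lambda_h)$ be the solution of the unperturbed stabilized scheme (\ref{dissysL2}) and split $u-u_h = (u-\tilde u_h) + (\tilde u_h - u_h)$. The first term is the exact-data error, which is controlled by the same conditional-stability argument as in \thref{conv} and \thref{L2conv}: under the calibration $h=\epsilon^{1/2s}$ (so that $\epsilon^{1/2}=h^{s}$) the triple-norm convergence of the stabilized formulation together with \thref{tbi} yields $||u-\tilde u_h||_{H^1(G)} \le C\epsilon^{\tau/2}||u||_{H^s(\Omega)}$. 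This piece introduces no new ingredient, so I would simply cite it.

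The new content is the perturbation term. Writing $(\delta_h,\mu_h):=(u_h-\tilde u_h,\lambda_h-\tilde\lambda_h)$ and subtracting (\ref{dissysL2}) from (\ref{disL2pert}), the pair $(\delta_h,\mu_h)$ solves the stabilized system with zero consistency defect and a right-hand side driven only by the perturbations, namely $h^{-2}(\hat q - q,v_h)_\omega$ in the first equation and $(\hat f - f,w_h)_\Omega$ in the second. Using the inf-sup stability assumed for these systems (the stabilized counterpart of \thref{infsup}) in the corresponding stabilized triple norm $|||(\cdot,\cdot)|||_*$, I would bound $|||(\delta_h,\mu_h)|||_*$ by the supremum of these two functionals over the test pair. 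The key bookkeeping is the $h$-weight in the first functional: since $|||(\cdot,\cdot)|||_*$ controls $h^{-2}||v_h||_\omega^2$, one has $||v_h||_\omega \le h\,|||(v_h,w_h)|||_*$, so that $h^{-2}(\hat q-q,v_h)_\omega \le ||h^{-1}(\hat q-q)||_\omega\,|||(v_h,w_h)|||_*$, which is exactly the weighted factor appearing in the statement; the second functional is estimated directly by $||\hat f-f||_\Omega\,||w_h||_\Omega$. This yields $|||(\delta_h,\mu_h)|||_* \le C\rho$ with $\rho:=||\hat f-f||_\Omega+||h^{-1}(\hat q-q)||_\omega$.

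It then remains to convert this residual control into an $H^1(G)$ bound. Because $\delta_h\in V_h\subset H^1(\Omega)$, it is admissible in \thref{tbi}, with data measurement $d(\delta_h)=||\delta_h||_\omega+||\Delta_h\delta_h||_{H^{-1}(\Omega)} \le C|||(\delta_h,\mu_h)|||_* \le C\rho$ (the jump term $\mathcal{J}_h$ and the $||h\Delta_h\cdot||_\Omega$ term bounding the negative-norm residual as in (\ref{rbounduc})), and with energy $E=||\delta_h||_{H^1(\Omega)} \le \epsilon^{-1/2}|||(\delta_h,\mu_h)|||_* \le C\epsilon^{-1/2}\rho$, the latter coming from the $\epsilon||v_h||_{H^1(\Omega)}^2$ contribution. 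Substituting $\eta=C\rho$ and $E=C\epsilon^{-1/2}\rho$ into $||\delta_h||_{H^1(G)}\le C\eta^\tau(E+\eta)^{1-\tau}$ and using $\epsilon^{-1/2}\ge 1$ gives $||\delta_h||_{H^1(G)} \le C\rho^\tau(\epsilon^{-1/2}\rho)^{1-\tau}=C\epsilon^{-(1-\tau)/2}\rho$. Combining the two contributions by the triangle inequality delivers the asserted bound.

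The step I expect to be the main obstacle is the simultaneous handling of the $h$-weight and the stability exponents: one must carry the $h^{-2}$ scaling of the $\omega$-penalty through the residual norm so that the perturbation in $q$ enters as $||h^{-1}(\hat q-q)||_\omega$ rather than $||\hat q-q||_\omega$, while at the same time arranging the conditional-stability interpolation so that the energy blow-up $\epsilon^{-1/2}$ is raised only to the power $1-\tau$, producing the amplification $\epsilon^{-(1-\tau)/2}$. A secondary technical point is that the inf-sup stability must be available in the modified triple norm $|||(\cdot,\cdot)|||_*$ built from $h^{-2}(\cdot,\cdot)_\omega$ and $\gamma^2 s^*(\cdot,\cdot)$ rather than from (\ref{triplenormuc}); this is taken as granted, since \thref{infsup} is assumed to hold for these systems.
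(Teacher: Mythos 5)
Your proposal is correct and follows exactly the route the paper intends: the paper omits this proof, remarking only that it is obtained "by repeating the proofs for schemes without perturbation," and your argument (splitting off the perturbation by linearity, bounding it in the stabilized triple norm via the inf-sup stability so that the $h^{-2}$ weight on the $\omega$-penalty turns the data error into $\|h^{-1}(\hat q-q)\|_\omega$, and then applying \thref{tbi} with $\eta\sim\rho$ and $E\sim\epsilon^{-1/2}\rho$ to produce the $\epsilon^{-(1-\tau)/2}$ amplification) is precisely that repetition. The only caveat, inherited from the paper's statement rather than introduced by you, is that the exact-data contribution in the $H^1(G)$ norm under the calibration $h=\epsilon^{1/2s}$ follows from the $H^{-1}$-residual rate $h^{s-1}$ rather than from \thref{L2conv}, so that part should be cited as in \thref{conv}.
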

\begin{remark}
    We omitted the proofs for stability under perturbation, which can be derived simply by repeating the proofs for schemes without perturbation. We only mention that according to \thref{pertL2}, the scheme (\ref{disL2pert}) is more sensitive to perturbation comparing to scheme (\ref{disucppert}). This is due to the increasing weight of the interior data $h^{-2}(u_h,v_h)_\omega$in (\ref{disL2pert}), which shows a trade-off between convergence rate and stability.
\end{remark}

\section{Feasible finite element spaces and stability analysis}
In this section we present the explicit construction of finite element spaces for which \thref{infsup} and \thref{consistency} are satisfied. We begin by defining some basic notations. Let $\Omega$, $\mathcal{T}$, $\mathcal{F}$ be defined as in \ref{prelim}. Note that since we do not need to distinguish discrete spaces and continuous spaces, subscript $h$ is omitted for convenience. Let $\Gamma$ be a subset of $\partial\Omega$. Define $\mathcal{V}$ to be the collection of all vertices of simplices in $\mathcal{T}$, $\mathcal{V}_\Gamma$ to be those lying on $\Gamma$, and $\mathcal{V}_I$ to be those lying inside $\Omega$. Similarly the set of facets lying on $\Gamma$ is denoted by $\mathcal{F}_\Gamma$ and those lying inside by $\mathcal{F}_I$. For $d=3$, we also denote the set of all edges of simplices (lying on $\Gamma$, inside $\Omega$) by $\mathcal{E}$ ($\mathcal{E}_\Gamma$, $\mathcal{E}_I$). Note for simplicity, while considering the Cauchy problem, we always assume $\mathcal{F}_{\Gamma_0}\cap\mathcal{F}_{\Gamma_1}=\emptyset$. Recall that $V_h^k$ is the conforming finite element space. We define a Lagrange basis for $V_h^k$ following the notations in \cite{ciarlet2018family}. Let the nodal points on the reference element $\hat{K}$ to be the equidistant nodes with respect to barycentric coordinates $\{\lambda_0,\lambda_1,...,\lambda_d\}$. Denote $\mathcal{N}^k$ the collection of nodes for global shape functions. The Lagrange functions forming a basis of $V_h^k$ indexed by nodal point $N$'s are denoted by $B_N^k$, which satisfies
\begin{equation*}
    \forall N,N' \in \mathcal{N}^k, B_N^k(N')=\delta_{N,N'},
\end{equation*}
where $\delta_{N,N'}$ is the Kronecker delta. Now we define the following subspaces of $V_h^k$. Let $\mathcal{X}$ be a subset of $\mathcal{T},\mathcal{F}$ or $\mathcal{E}$, and $\mathcal{Y}$ be a subset of $\mathcal{V}$, then
\begin{equation}\label{subspaces}
    \begin{aligned}
        V^k_{h,\mathcal{X}} &:=\Span \{B^k_N: N\in X^o\cap \mathcal{N}^k,\forall X\in \mathcal{X}\}\\
        V^k_{h,\mathcal{Y}} &:=\Span\{B^k_N: N\in \mathcal{Y}\cap\mathcal{N}^k\}\\
    \end{aligned}.
\end{equation}
Using the definition in (\ref{subspaces}), we categorize the following spaces.
\begin{prop}
    The following direct sum decomposition hold
    \begin{equation}
        \begin{aligned}
            V_h^k &= V^k_{h,\mathcal{T}}\oplus V^k_{h,\mathcal{F}}\oplus V^k_{h,\mathcal{E}}\oplus V^k_{h,\mathcal{V}}\\
            W_h^k &= V^k_{h,\mathcal{T}}\oplus V^k_{h,\mathcal{F}_I}\oplus V^k_{h,\mathcal{E}_I}\oplus V^k_{h,\mathcal{V}_I}\\
            V^k_{h0} &= V^k_{h,\mathcal{T}}\oplus V^k_{h,\mathcal{F}_I}\oplus V^k_{h,\mathcal{F}_{\Gamma_1}}\oplus V^k_{h,\mathcal{E}_I\cup\mathcal{E}_{\Gamma_1}}\oplus V^k_{h,\mathcal{V}_I\cup\mathcal{V}_{\Gamma_1}}\\
            V^k_{h1} &= V^k_{h,\mathcal{T}}\oplus V^k_{h,\mathcal{F}_I}\oplus V^k_{h,\mathcal{F}_{\Gamma_0}}\oplus V^k_{h,\mathcal{E}_I\cup\mathcal{E}_{\Gamma_0}}\oplus V^k_{h,\mathcal{V}_I\cup\mathcal{V}_{\Gamma_0}}
        \end{aligned}
    \end{equation}
\end{prop}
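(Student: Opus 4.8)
The plan is to reduce all four decompositions to the single elementary fact that splitting a basis into disjoint subsets yields a direct sum whose summands span the whole space. The only structural input required is that the Lagrange nodal set $\mathcal{N}^k$ is partitioned by the geometric entities of the mesh: each node lies in the relative interior of exactly one entity, being either a vertex $N\in\mathcal{V}$, a point of the relative interior $E^o$ of some edge $E\in\mathcal{E}$, of the relative interior $F^o$ of some face $F\in\mathcal{F}$, or of the interior $K^o$ of some cell $K\in\mathcal{T}$. This is precisely the standard placement of the equidistant barycentric nodes on a simplex, and it produces four pairwise disjoint index families whose union is all of $\mathcal{N}^k$.

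First I would treat the full space $V_h^k$. Since $\{B_N^k\}_{N\in\mathcal{N}^k}$ is a basis by unisolvence of the Lagrange element, and the four node families above are pairwise disjoint with union $\mathcal{N}^k$, the associated spans $V^k_{h,\mathcal{T}}$, $V^k_{h,\mathcal{F}}$, $V^k_{h,\mathcal{E}}$, $V^k_{h,\mathcal{V}}$ are in direct sum and add up to $V_h^k$. Concretely, any $v_h$ has a unique expansion $v_h=\sum_N v_h(N)B_N^k$, and grouping the coefficients according to the entity carrying each node gives the unique decomposition into the four summands, which is exactly the meaning of the direct sum.

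For the three constrained spaces the decisive step is a trace characterization: a function $v_h\in V_h^k$ vanishes on a relatively open boundary portion $\Gamma$ that is a union of facets if and only if $v_h(N)=0$ for every node $N$ lying on $\overline{\Gamma}$. The forward direction is immediate, and the converse follows because the restriction of $v_h$ to each boundary facet is a polynomial in $\mathbf{P}_k$ on that simplex, unisolvently determined by the nodes carried by the facet together with its sub-edges and vertices, so vanishing nodal data forces the facet trace to vanish. Granting this, membership in $W_h^k$ (respectively $V^k_{h0}$, $V^k_{h1}$) is equivalent to the vanishing of the coefficients $v_h(N)$ at all nodes on $\partial\Omega$ (respectively on $\overline{\Gamma_0}$, $\overline{\Gamma_1}$). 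Deleting exactly those basis functions from the expansion of $V_h^k$ leaves the spans indexed by the complementary nodes, which are precisely the listed summands: for $W_h^k$ one keeps the cell interiors $\mathcal{T}$ and the interior sub-entities $\mathcal{F}_I,\mathcal{E}_I,\mathcal{V}_I$; for $V^k_{h0}$ one additionally keeps the entities carried by $\Gamma_1$, yielding $\mathcal{F}_{\Gamma_1}$, $\mathcal{E}_I\cup\mathcal{E}_{\Gamma_1}$, $\mathcal{V}_I\cup\mathcal{V}_{\Gamma_1}$, and symmetrically for $V^k_{h1}$.

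I expect the main obstacle to be the bookkeeping at the interface $\overline{\Gamma_0}\cap\overline{\Gamma_1}$ for the sub-facet entities, since an interface vertex or edge lies in the closures of both boundary portions and must be assigned consistently to the constrained side. This is exactly where the standing assumption $\mathcal{F}_{\Gamma_0}\cap\mathcal{F}_{\Gamma_1}=\emptyset$ is used: it ensures that the boundary facets partition cleanly into $\mathcal{F}_{\Gamma_0}$ and $\mathcal{F}_{\Gamma_1}$, and it fixes the convention for allocating the shared edges and vertices, which is encoded in the unions $\mathcal{E}_I\cup\mathcal{E}_{\Gamma_1}$ and $\mathcal{V}_I\cup\mathcal{V}_{\Gamma_1}$ appearing in the statement. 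The remaining verification, that the trace on a single boundary facet is unisolvent in its own nodes, is routine and follows directly from the definition of the Lagrange element restricted to that facet.
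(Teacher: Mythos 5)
The paper states this proposition without proof, treating it as immediate from the definition of the nodal subspaces; your argument --- partitioning the Lagrange nodes according to the unique mesh entity whose relative interior contains them, so that the basis splits into disjoint families, and then characterizing the boundary-constrained subspaces by the vanishing of nodal values on the relevant closed boundary portion via facet-wise unisolvence --- is exactly the standard justification and is correct. Your observation about the allocation of the interface edges and vertices in $\overline{\Gamma_0}\cap\overline{\Gamma_1}$ is the one genuine point of care (the paper's phrase ``lying on $\Gamma$'' is ambiguous there), and your resolution, assigning them to the constrained side so that they are excluded from $\mathcal{E}_{\Gamma_1}$ and $\mathcal{V}_{\Gamma_1}$ in the decomposition of $V^k_{h0}$ and symmetrically for $V^k_{h1}$, is the one that makes the stated identities true.
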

The following technical results will be useful to prove stability.
\begin{prop}[Inverse trace inequality]\thlabel{invtrace}
    Let $p\in V^k_{h,\mathcal{F}}$, then there exists some constant $C>0$ independent of $h$ (but depends on $k$), such that
    \begin{equation*}
        ||p||_\Omega \le C\sum_{F\in\mathcal{F}}||h^\frac{1}{2}p||_F
    \end{equation*}
\end{prop}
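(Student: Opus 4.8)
The plan is to prove the estimate cellwise and then sum over the mesh, the cellwise bound coming from a scaling argument to the reference simplex combined with equivalence of norms on a finite-dimensional polynomial space. First I would write $||p||_\Omega^2 = \sum_{K\in\mathcal{T}_h}||p||_K^2$ and exploit the defining property (\ref{subspaces}) of $V^k_{h,\mathcal{F}}$: on each cell $K$ the restriction $p|_K$ is a polynomial of degree $\le k$ whose Lagrange nodal values vanish at every node of $K$ except possibly those lying in the relative interior of a facet $F\subset\partial K$. Pulling back through the affine map $\Phi_K:\hat K\to K$, the function $p|_K$ corresponds to an element $\hat p$ of the finite-dimensional space $\hat S:=\{\hat r\in\mathbf{P}_k(\hat K): \hat r(\hat N)=0 \text{ for every node } \hat N \text{ not lying in some } \hat F^o\}$.

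The key step is to show that on $\hat S$ the functional $\hat r\mapsto(\sum_{\hat F\subset\partial\hat K}||\hat r||_{\hat F}^2)^{\frac12}$ is a genuine norm, so that by equivalence of norms in finite dimension there is a constant $\hat C=\hat C(k,d)$ with $||\hat r||_{\hat K}\le \hat C(\sum_{\hat F}||\hat r||_{\hat F}^2)^{\frac12}$ for all $\hat r\in\hat S$. Homogeneity and nonnegativity are immediate; for definiteness, if $\hat r|_{\hat F}=0$ on every facet of $\hat K$, then in particular all facet-interior nodal values of $\hat r$ vanish, and since by construction these are the only nodal values an element of $\hat S$ is allowed to carry, unisolvence of the Lagrange degrees of freedom forces $\hat r\equiv 0$. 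This definiteness step is the crux of the argument; everything surrounding it is bookkeeping.

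Next I would transfer the reference inequality to the physical cell by the standard scalings $||p||_K\simeq h_K^{d/2}||\hat p||_{\hat K}$ and $||p||_F\simeq h_K^{(d-1)/2}||\hat p||_{\hat F}$ for each facet $F=\Phi_K(\hat F)$, with constants depending only on the shape-regularity of the family. Inserting these into the reference bound yields $||p||_K\le C h_K^{\frac12}(\sum_{F\subset\partial K}||p||_F^2)^{\frac12}$, and hence $||p||_K^2\le C^2\sum_{F\subset\partial K}||h^{\frac12}p||_F^2$ after using quasi-uniformity to replace $h_K$ by $h$.

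Finally I would sum over all cells. Since each interior facet is shared by exactly two simplices and each boundary facet by one, we have $\sum_K\sum_{F\subset\partial K}(\cdot)\le 2\sum_{F\in\mathcal{F}}(\cdot)$, giving $||p||_\Omega^2\le 2C^2\sum_{F\in\mathcal{F}}||h^{\frac12}p||_F^2$. The stated inequality then follows from the elementary bound $(\sum_F a_F^2)^{\frac12}\le\sum_F a_F$ valid for nonnegative $a_F$, which converts the $\ell^2$ sum into the $\ell^1$ sum on the right-hand side. I expect the scaling and summation to be entirely routine, so the only real obstacle is the norm-equivalence argument of the second paragraph, whose success hinges on the nodal characterization of $V^k_{h,\mathcal{F}}$.
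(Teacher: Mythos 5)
Your proof is correct and takes essentially the same route as the paper's: both reduce to the reference simplex, exploit that an element of $V^k_{h,\mathcal{F}}$ carries nonzero nodal values only at facet-interior nodes to equate the $L^2(\hat K)$ norm with the facet $L^2$ norms on that subspace (the paper does this explicitly through Gram matrices and the intermediate $\ell^2$ norm of nodal values, you invoke abstract finite-dimensional norm equivalence, with the same definiteness argument at the core), and then conclude by scaling and summing over cells. No gaps.
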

\begin{proof}
    consider the restriction of $p$ in any simplex $K$ and pull back to the reference simplex $\hat{K}$, Denote $\mathbf{G}_d$ the Gram matrix under $L^2(K)$ norm and $N_1, N_2,...,N_l$ be the nodes with respect to $\hat{K}$, then 
    \begin{equation}\label{L2l2}
          ||p||_{\hat{K}}^2=(p(N_1),p(N_2),...,p(N_l))G_d(p(N_1),p(N_2),...,p(N_l))^T\le C\sum_{N_i\in \partial \hat{K}}p^2(N_i).
    \end{equation}
  Since the equidistant Lagrange nodes form a basis on any $(d-1)$ dimensional face $F$ of $\hat{K}$, the respective Gram matrix has a smallest singular value. Thus we have on each face $F$:
  \begin{equation}\label{l2L2}
      \sum_{N_i\in F}p^2(N_i) \le C||p||_F^2.
  \end{equation}
    Then by (\ref{L2l2}) and (\ref{l2L2}), we have after standard scaling process
    \begin{equation*}
        ||p||^2_K \le Ch||p||^2_{\partial K}.
    \end{equation*}
    Then by summing over all $K$'s we get the desired result.
    
\end{proof}

\begin{prop}\thlabel{match}
    Let $K\in \mathcal{T}$ be a $d$-dimensional simplex and  $F\in\mathcal{F}$ be a $(d-1)$ dimensional face, then\\
    (1) for any $q\in\mathbf{P}_k(F)$, there exists some function $p\in V^{k+d}_{h,\{F\}}$, such that $\forall q'\in \mathbf{P}_k(F)$,
    \begin{equation*}
        (p,q')_F=(q,q')_F;
    \end{equation*}
    (2) for any $q(x)\in\mathbf{P}_k(K)$, there exists some function $p\in V^{k+d+1}_{h,\{K\}}$, such that $\forall q'\in \mathbf{P}_k(K)$,
    \begin{equation*}
        (p,q')_K=(q,q')_K,
    \end{equation*}
    where $\mathbf{P}_k(D)$ represents the space of polynomials with order less than or equal to $k$ on domain $D$.\\
    (3) for $p,q$ satisfying conditions in (1) or (2), there exists some constant $C>0$ depending on $k$ and $d$ but independent of the measure of $D$ such that
    \begin{equation*}
        ||p||_D \le C||q||_D,
    \end{equation*}
    where $D = K$ or $F$. 
\end{prop}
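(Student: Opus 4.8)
\textbf{Proof proposal for Proposition \ref{match}.}

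The plan is to work entirely on the reference simplex $\hat K$ and then transfer the conclusions back to $K$ (or $F$) by the standard affine scaling, exploiting the fact that all the stated constants are allowed to depend on $k$ and $d$ but not on the measure of the domain. For part (1), fix the face $F$ and consider the finite-dimensional linear map $\Pi\colon V^{k+d}_{h,\{F\}}\to \mathbf{P}_k(F)^{*}$ that sends $p$ to the functional $q'\mapsto (p,q')_F$; equivalently, compose with the Riesz identification to get a map into $\mathbf{P}_k(F)$ itself via the $L^2(F)$ inner product. The assertion is exactly that this map is surjective onto $\mathbf{P}_k(F)$. Since both spaces are finite dimensional, surjectivity is equivalent to the statement that no nonzero $q\in\mathbf{P}_k(F)$ is $L^2(F)$-orthogonal to the image, i.e. that the only $q\in\mathbf{P}_k(F)$ with $(p,q)_F=0$ for all $p\in V^{k+d}_{h,\{F\}}$ is $q=0$. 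So the crux is to verify this injectivity of the adjoint.

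The key idea for establishing that adjoint injectivity is to use bubble-type functions. On the face $F$ (a $(d-1)$-simplex with barycentric coordinates $\lambda_0,\dots,\lambda_{d-1}$), the face bubble $b_F=\lambda_0\cdots\lambda_{d-1}$ vanishes on $\partial F$ and is strictly positive in the interior $F^o$. I would first argue that for any $q\in\mathbf{P}_k(F)$ the product $b_F\,q$ lies in $\mathbf{P}_{k+d}(F)$ and vanishes on $\partial F$, hence its extension by the appropriate interior nodal degrees of freedom is realizable by a function in the space $V^{k+d}_{h,\{F\}}$ spanned by the Lagrange basis functions whose nodes lie strictly inside $F^o$ (this is where the degree elevation to $k+d$ buys enough interior nodes). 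Then, choosing $p$ associated to $b_F\,q$, if $q$ were orthogonal to the whole image we would in particular get $(b_F\,q,q)_F=\int_F b_F\,q^2=0$; since $b_F>0$ on $F^o$ this forces $q\equiv 0$. This gives surjectivity and hence existence of the desired $p$. Part (2) is entirely analogous, replacing $F$ by the $d$-simplex $K$ and the face bubble by the element bubble $b_K=\lambda_0\cdots\lambda_d\in\mathbf{P}_{d+1}$, which explains the degree $k+d+1$: one needs $b_K\,q\in\mathbf{P}_{k+d+1}(K)$ vanishing on $\partial K$ to sit in the interior-node subspace $V^{k+d+1}_{h,\{K\}}$.

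For part (3), the norm equivalence $\|p\|_D\le C\|q\|_D$ follows from finite dimensionality together with scaling. On the reference domain $\hat D$, both $p\mapsto\|p\|_{\hat D}$ and the map $q\mapsto p$ (the inverse of a fixed isomorphism between finite-dimensional spaces) are continuous, so $\|p\|_{\hat D}\le C\|q\|_{\hat D}$ with a constant depending only on $k,d$; the affine pullback multiplies both norms by the same Jacobian factor, so the inequality is scale-invariant and transfers to arbitrary $D$. The main obstacle I anticipate is part of the existence argument in (1)--(2): namely confirming that the function $b_F\,q$ (respectively $b_K\,q$), together with its prescribed interior nodal values, genuinely lies in the \emph{single-face} (single-element) subspace $V^{k+d}_{h,\{F\}}$ as defined in \eqref{subspaces}, i.e. that all nodes carrying nonzero degrees of freedom are interior to $F$ (to $K$). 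This requires a careful count showing that degree $k+d$ (resp. $k+d+1$) provides exactly enough interior equidistant Lagrange nodes to represent every product of $b_F$ (resp. $b_K$) with a degree-$k$ polynomial, which is the one genuinely combinatorial step rather than a routine estimate.
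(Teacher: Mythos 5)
Your proposal is correct and is essentially the paper's own argument: the paper likewise identifies the interior-node subspace with the bubble $\prod_j\lambda_j$ times a Lagrange basis of $\mathbf{P}_k$, proves invertibility of the restricted $L^2$-projection via the weighted Gram matrix $\mathbf{G}_{ij}=\int_K \prod_j\lambda_j\, q_i q_j$ (which is exactly your positivity argument $\int b\,q^2=0\Rightarrow q=0$, phrased as adjoint injectivity), uses the same dimension count $\binom{k+d}{k}=\dim\mathbf{P}_k(K)$, and obtains (3) by finite dimensionality and scaling. The combinatorial step you flag is handled in the paper precisely as you anticipate, by observing that the interior equidistant nodes of the elevated degree are an affine image of the degree-$k$ Lagrange nodes.
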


\begin{proof}
    We give the proof for (2) and (1) follows similarly. Since all spaces considered here are finite dimensional inner product spaces, we introduce the projection from $\mathbf{P}_{k+d+1}$ to $\mathbf{P}_k(K)$, denoted as $T$. We need to show that the restriction of $T$ on $V^{k+d+1}_{h,\{K\}}$ is invertible, which can be derived by showing $\ker T \cap V^{k+d+1}_{h,\{K\}}=\{0\}$ and $\dim V^{k+d+1}_{h,\{K\}}= \dim \mathbf{P}_k(K)$. The latter holds by observing that 
    \begin{equation*}
        \dim V^{k+d+1}_{h,\{K\}}=\binom{k+d}{k}= \dim \mathbf{P}_k(K),
    \end{equation*}
    where the first equality is easily verified by Vandermonde's Identity. Now we only need to show the kernel of $T$ has no non-zero intersection with $V^{k+d+1}_{h,\{K\}}$. Since there are $N = \binom{k+d}{k}$ nodes within $K$, and these nodes can be derived through an affine mapping from the set of equidistant Lagrange nodes for $\mathbf{P}_k(K)$, they are a valid set of nodes to define a Lagrange basis for $\mathbf{P}_k(K)$. We denote the basis under barycentric coordinates $q_i(\mathbf{\lambda}), i\in\{1,...,N\}$, where $\mathbf{\lambda} = (\lambda_0,\lambda_1,...,\lambda_d)$. Then the basis for $V^{k+d+1}_{h,\{K\}}$ are $\prod_{j=0}^d\lambda_j q_i(\mathbf{\lambda})$. Consider $\mathbf{G}$ defined as $\mathbf{G}_{ij}=\int_{K}\prod_{k=0}^d\lambda_k q_iq_jd\lambda$. Then $\mathbf{G}$ can be viewed as the Gram matrix with respect to the basis $q_j$'s and the weighted inner product, and thus $\mathbf{G}$ is invertible. Let the coordinates of a function $p\in V^{k+d+1}_{h,\{K\}}$ be $\mathbf{x}$, then $p\in\ker T$ iff $\mathbf{G}x = \mathbf{0}$. This gives $p=0$. We conclude that the restriction of $T$ on $V^{k+d+1}_{h,\{K\}}$ has a bounded inverse, and by standard scaling process we obtain the estimate in (3).
    
\end{proof}

\subsection{Full conforming spaces and stability}\label{fullconforming}
In this section, we propose a choice of $W_h$ to be the full conforming subspace of $H^1_0(\Omega)$ ($V_1$). Let $d$ be the dimension of the geometry $\Omega$, $V_h (V_{0h}) = V_h^k (V_{0h}^k)$ for $k\ge 1$, and $W_h (V_{1h}) = W_h^{k+d-1} (V_{1h}^{k+d-1})$. Notice that the systems (\ref{dissys}) and (\ref{dissys2}) satisfy \thref{consistency} directly by the weak formulation of the continuous problems. Note that since the discrete spaces are all conforming, we naturally write integration over meshes as integration over the whole domain $\Omega$. In the rest of the section we will prove \thref{infsup} is fulfilled in our setting. 
\begin{lemma}\thlabel{match1}
    Let $u_h\in V_h^k$ (or $V_{0h}^k$) with $k\ge 2$, then there exists functions $w_1\in V_{h,\mathcal{T}}^{k+d-1}$, $w_2\in V_{h,\mathcal{F}_I}^{k+d-1}$ (or $V_{h,\mathcal{F}_I\cup\mathcal{F}_{\Gamma_0}}^{k+d-1}$), such that $\forall p,q\in X_h^k$ satisfying $p|_K\in \mathbf{P}_{k-2}(K), \forall K\in\mathcal{T}$ and $q|_F\in \mathbf{P}_{k-1}(F),\forall F\in \mathcal{F}$ , we have\\
    (1)
    \begin{equation*}
        (w_1,p)_\Omega = \sum_{K\in\mathcal{T}}(h^2\Delta u,p)_K.
    \end{equation*}
    (2) 
    \begin{equation*}
        \sum_{F\in\mathcal{F}_C}(w_2,q)_F = \sum_{F\in\mathcal{F}_C}\int_Fh\llbracket\nabla u_h\cdot n_F\rrbracket q_FdS,
    \end{equation*}
    where $\mathcal{F}_C=\mathcal{F}_I$ or $\mathcal{F}_I\cup \mathcal{F}_{\Gamma_0}$.\\
    (3) There exists some constant $C>0$ independent of mesh size $h$, such that
    \begin{equation}\label{w1L2}
       ||w_1||_{H^1(\Omega)}\le C ||h^{-1}w_1||_\Omega \le Ch||\Delta_h u_h||_\Omega,
    \end{equation}
    and
    \begin{equation}\label{w2L2}
       ||h^{-1}w_2||^2_{\Omega} \le  C\sum_{F\in\mathcal{F}_C}||h^{-\frac{1}{2}}w_2||^2_{F} \le C_2 \sum_{F\in\mathcal{F}_C}||h^\frac{1}{2}\llbracket \nabla u_h\cdot n_F\rrbracket||^2_F.
    \end{equation}
\end{lemma}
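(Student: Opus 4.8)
The plan is to build $w_1$ and $w_2$ by a purely local moment-matching procedure, invoking \thref{match} on each element and each face separately, and then to sum the local pieces into globally defined bubble functions. For part (1), fix $K\in\mathcal{T}$. Since $u_h|_K\in\mathbf{P}_k(K)$ with $k\ge 2$, we have $h^2\Delta u_h|_K\in\mathbf{P}_{k-2}(K)$. Applying \thref{match}(2) with the polynomial degree there taken to be $k-2$ (so that the output lands in $V^{(k-2)+d+1}_{h,\{K\}}=V^{k+d-1}_{h,\{K\}}$) produces $p_K\in V^{k+d-1}_{h,\{K\}}$ with $(p_K,q')_K=(h^2\Delta u_h,q')_K$ for all $q'\in\mathbf{P}_{k-2}(K)$. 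Setting $w_1:=\sum_{K\in\mathcal{T}}p_K$ gives an element of $V^{k+d-1}_{h,\mathcal{T}}$, because the $p_K$ have pairwise disjoint supports and vanish on $\partial K$; since any admissible $p$ in the statement restricts to $\mathbf{P}_{k-2}(K)$ on each $K$, summing the local identities over $K$ yields (1).

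For part (2), fix $F\in\mathcal{F}_C$. As $u_h|_K\in\mathbf{P}_k(K)$, the scaled normal jump $h\llbracket\nabla u_h\cdot n_F\rrbracket|_F$ belongs to $\mathbf{P}_{k-1}(F)$, so \thref{match}(1) with degree $k-1$ (output in $V^{(k-1)+d}_{h,\{F\}}=V^{k+d-1}_{h,\{F\}}$) yields $p_F\in V^{k+d-1}_{h,\{F\}}$ matching its $\mathbf{P}_{k-1}(F)$-moments against $h\llbracket\nabla u_h\cdot n_F\rrbracket$. I then set $w_2:=\sum_{F\in\mathcal{F}_C}p_F\in V^{k+d-1}_{h,\mathcal{F}_C}$. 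The step that makes this assembly legitimate is the observation that, for $F'\neq F$, the trace $p_{F'}|_F$ vanishes: $p_{F'}$ is a combination of nodal functions at nodes interior to $F'$, none of which lie on $F$, so $p_{F'}|_F$ is a polynomial on $F$ vanishing at every node of $F$ and hence is identically zero. Therefore $w_2|_F=p_F|_F$ for each $F$, and summing the local moment identities over $\mathcal{F}_C$ gives (2).

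It remains to establish (3). For $w_1$, the first inequality in (\ref{w1L2}) is the elementwise inverse inequality (\ref{inv}), which for $h<1$ gives $\|w_1\|_{H^1(\Omega)}\le C\|h^{-1}w_1\|_\Omega$; the second uses the local stability \thref{match}(3), namely $\|p_K\|_K\le C\|h^2\Delta u_h\|_K$, whence $\|h^{-1}p_K\|_K\le Ch\|\Delta_h u_h\|_K$, and one sums over $K$. For $w_2$, the first inequality in (\ref{w2L2}) follows from the elementwise bound $\|w_2\|_K^2\le Ch\|w_2\|_{\partial K}^2$ established in the proof of \thref{invtrace}, summed over $K$ and reorganised over faces; the second again invokes \thref{match}(3), $\|p_F\|_F\le C\|h\llbracket\nabla u_h\cdot n_F\rrbracket\|_F$, which rescales to $\|h^{-1/2}w_2\|_F\le C\|h^{1/2}\llbracket\nabla u_h\cdot n_F\rrbracket\|_F$ using $w_2|_F=p_F|_F$, after which I square and sum over $\mathcal{F}_C$. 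I expect the only genuinely delicate point to be the trace-decoupling argument of the second paragraph: it is precisely what allows the face-by-face stability of \thref{match}(3) to be transferred to the globally assembled $w_2$ and combined with \thref{invtrace}; the remainder is degree bookkeeping and standard scaling.
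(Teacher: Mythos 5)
Your proposal is correct and follows essentially the same route as the paper's (very terse) proof: build $w_1$ and $w_2$ by summing the local moment-matching functions of \thref{match} over elements and faces, then obtain (3) from the inverse inequality, the local stability bound in \thref{match}(3), and the inverse trace inequality of \thref{invtrace}. Your explicit degree bookkeeping and the observation that $p_{F'}|_F=0$ for $F'\neq F$ (so that $w_2|_F=p_F|_F$) are exactly the details the paper leaves implicit.
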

\begin{proof}
    (1) and (2) are directly obtained by summing functions in \thref{match} over all simplices (faces), so are the second inequalities in (\ref{w1L2}) and (\ref{w2L2}). The estimate for the first inequalities in (\ref{w1L2}) inverse inequality (\ref{inv}), and the first inequality in (\ref{w2L2}) is by applying inverse inequality the inverse trace inequality in \thref{invtrace}.
    
\end{proof}
\begin{lemma}\thlabel{matchconforming}
   Let $u_h\in V_h^k$ (or $V_{0h}^k$) with $k\ge 2$ be the solution of (\ref{dissys}) or (\ref{dissys2}). Then there exists a function $w_h \in W_h^{k+d-1}$ (or $(V_{1h}^{k+d-1})$), such that for the Cauchy problem
    \begin{equation}\label{upper}
        ||w_h||^2_{H^1(\Omega)} \le C_0 (\mathcal{J}_h(u_h,u_h)+ ||h^\frac{1}{2}\nabla_hu_h\cdot n||^2_{\Gamma_0}+||h\Delta_hu_h||^2_\Omega),
    \end{equation}
    where $n$ is the outward normal vector on $\Gamma_0$, and 
    \begin{equation}\label{lower}
        a(u_h,w_h)\ge \mathcal{J}_h(u_h,u_h)+||h^\frac{1}{2}\nabla_hu_h\cdot n||^2_{\Gamma_0}+||h\Delta_hu_h||^2_\Omega.
    \end{equation}
    For the unique continuation, the $||h^\frac{1}{2}\nabla_hu_h\cdot n||^2_{\Gamma_0}$ on the above estimates should be removed.
\end{lemma}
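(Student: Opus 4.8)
The plan is to construct the test function $w_h$ explicitly as a linear combination of an element-interior bubble $w_1$ and a face bubble $w_2$ supplied by \thref{match1}, arranged so that $a(u_h,w_h)$ reproduces the three residual quantities on the right of (\ref{lower}) up to one controllable cross term. Throughout I use that, since $V_h^k$ is $H^1$-conforming, $a(u_h,w)=(\nabla u_h,\nabla w)_\Omega$ and elementwise integration by parts is available, the gradient jumps being genuinely nonzero because only $u_h$ itself (not $\nabla u_h$) is continuous.

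First I would fix $w_1\in V^{k+d-1}_{h,\mathcal{T}}$ from \thref{match1} applied with the elementwise polynomial $p|_K=\Delta u_h|_K\in\mathbf{P}_{k-2}(K)$, and $w_2\in V^{k+d-1}_{h,\mathcal{F}_C}$ applied with $q=\llbracket\nabla u_h\cdot n\rrbracket\in\mathbf{P}_{k-1}(F)$, where $\mathcal{F}_C=\mathcal{F}_I$ for unique continuation and $\mathcal{F}_C=\mathcal{F}_I\cup\mathcal{F}_{\Gamma_0}$ for the Cauchy problem. Since $w_1$ carries degrees of freedom only at element-interior nodes it vanishes on every face, so integration by parts gives $a(u_h,w_1)=-\sum_K(\Delta u_h,w_1)_K=-\|h\Delta_h u_h\|_\Omega^2=:-L$, using the matching identity with $p=\Delta u_h$. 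For $w_2$, which is continuous and vanishes on $\Gamma_1$, the same manipulation yields the interior-jump and $\Gamma_0$ boundary contributions plus a volume remainder:
\[
a(u_h,w_2)=-\sum_K(\Delta u_h,w_2)_K+\sum_{F\in\mathcal{F}_C}\int_F\llbracket\nabla u_h\cdot n\rrbracket\,w_2\,dS.
\]
The matching property of \thref{match1}, invoked with $q=\llbracket\nabla u_h\cdot n\rrbracket$, collapses the face sum exactly into $\mathcal{J}_h(u_h,u_h)+\|h^{1/2}\nabla_h u_h\cdot n\|_{\Gamma_0}^2=:J$ (for unique continuation only the $\mathcal{J}_h$ part survives).

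I would then set $w_h=w_2-\alpha w_1$ with $\alpha>0$ to be fixed, so that $a(u_h,w_h)=J+\alpha L-\sum_K(\Delta u_h,w_2)_K$. The cross term is estimated by $\|\Delta_h u_h\|_\Omega\|w_2\|_\Omega$; writing $\|\Delta_h u_h\|_\Omega=h^{-1}L^{1/2}$ and using the stability bound $\|h^{-1}w_2\|_\Omega\le CJ^{1/2}$ from (\ref{w2L2}) gives $|\sum_K(\Delta u_h,w_2)_K|\le C\,L^{1/2}J^{1/2}$, which I absorb by Young's inequality. Choosing first the Young parameter and then $\alpha$ large enough yields $a(u_h,w_h)\ge\tfrac12(J+L)$, and rescaling $w_h$ by a fixed factor upgrades the constant to $1$, giving (\ref{lower}). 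For the upper bound (\ref{upper}) I combine $\|w_1\|_{H^1(\Omega)}\le Ch\|\Delta_h u_h\|_\Omega=CL^{1/2}$ from (\ref{w1L2}) with $\|w_2\|_{H^1(\Omega)}\le CJ^{1/2}$, the latter from $\|w_2\|_\Omega\le ChJ^{1/2}$ and the inverse inequality (\ref{inv}); the triangle inequality then delivers $\|w_h\|_{H^1(\Omega)}^2\le C_0(J+L)$. Finally I would verify membership: both bubbles have vanishing boundary (respectively $\Gamma_1$) degrees of freedom, so $w_h\in W_h^{k+d-1}$ (resp.\ $V_{1h}^{k+d-1}$), and the unique continuation case is identical upon deleting the $\Gamma_0$ contribution everywhere.

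I expect the main technical point to be the volume cross term $\sum_K(\Delta u_h,w_2)_K$: unlike $w_1$, the face bubble $w_2$ does not pair trivially with $\Delta u_h$, and the whole construction hinges on the scaling $\|h^{-1}w_2\|_\Omega\lesssim J^{1/2}$ so that this term sits precisely at the order $L^{1/2}J^{1/2}$ and can be absorbed against the $\alpha L$ reserve. Getting the order of constant choices right—first Young, then $\alpha$, then a global rescaling to recover coefficient $1$ in (\ref{lower})—is where the argument must be handled with care.
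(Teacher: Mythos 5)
Your proposal is correct and follows essentially the same route as the paper: both take $w_h$ as a combination $-\alpha w_1+\beta w_2$ of the element and face bubbles from \thref{match1}, use that $w_1$ vanishes on all faces so it pairs only with $\Delta_h u_h$, collapse the face sum via the matching property, and absorb the single cross term $(\Delta_h u_h,w_2)_\Omega$ by Young's inequality using the scaling $\|h^{-1}w_2\|_\Omega\lesssim J^{1/2}$ from (\ref{w2L2}). The only cosmetic difference is that you fix the coefficient of $w_2$ to one and rescale at the end, whereas the paper takes $\beta=2$ and $\alpha$ large to land directly on coefficient $1$ in (\ref{lower}).
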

\begin{proof}
    We only prove it for Cauchy problem. Let $w_h=-\alpha w_1 + \beta w_2$, with $w_1,$ $w_2$ defined as in \thref{match1}. Now we determine $\alpha,\beta$ satisfying the estimates. Note that if $\alpha$ and $\beta$ does not depend on $h$, then the upper bound (\ref{upper}) holds by using (\ref{w1L2}) and (\ref{w2L2}) directly. For (\ref{lower}), we have
    \begin{equation*}
        \begin{aligned}
            a(u_h,w_h) &= (-\Delta_hu_h,w_h)_\Omega+\sum_{F\in\mathcal{F}_I} \int_F \llbracket \nabla u_h\cdot n_F\rrbracket w_hdS+\sum_{F\in\mathcal{F}_{\Gamma_0}} \int_F \llbracket \nabla u_h\cdot n_F\rrbracket w_hdS\\
            &= \alpha ||h\Delta_hu_h||^2_\Omega + \beta \mathcal{J}_h(u_h,u_h)+\beta ||h^\frac{1}{2}\nabla_hu_h\cdot n||^2_{\Gamma_0}-\beta(\Delta_hu_h,w_2)_\Omega,
        \end{aligned}
    \end{equation*}
    where the second equality applies the fact that $w_1=0$ on any $F\in\mathcal{F}$. To estimate the last term on the right-hand side we have
    \begin{equation*}
        \begin{aligned}
            (\Delta_hu_h,w_2)_\Omega &\le ||h^{-1}w_2||_\Omega||h\Delta_hu_h||_\Omega\\
            &\le 2C_2 ||h\Delta_hu_h||_\Omega^2+\frac{1}{2C_2}||h^{-1}w_2||^2_\Omega\\
            &\le 2C_2 ||h\Delta_hu_h||_\Omega^2+\frac{1}{2}\mathcal{J}_h(u_h,u_h)+\frac{1}{2}||h^\frac{1}{2}\nabla_hu_h\cdot n||^2_{\Gamma_0},
        \end{aligned}
    \end{equation*}
    where $C_2$ is the constant defined in (\ref{w2L2}). By combining these two estimates we obtain
    \begin{equation*}
        a(u_h,w_h) \ge (\alpha-2C_2)||h\Delta_hu_h||^2_\Omega + \frac{1}{2}\beta \left(\mathcal{J}_h(u_h,u_h)+||h^\frac{1}{2}\nabla_hu_h\cdot n||^2_{\Gamma_0}\right).
    \end{equation*}
    Now by setting $\beta=2$ and $\alpha \ge 2C_2+1$ we obtain the desired result. Note that $C_2$ is independent of $h$ and so is $\alpha$.
    
\end{proof}

\begin{theorem}\thlabel{infsupforfullconforming}
    With the $W_h$ and $V_{1h}$ chosen in this section, there exists some $\gamma>0$, such that \thref{infsup} is satisfied.
\end{theorem}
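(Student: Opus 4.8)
The plan is to verify \thref{infsup} directly, since in the present fully conforming setting \thref{consistency} is immediate: because $W_h\subset H^1_0(\Omega)$ and $V_{1h}\subset V_1$ are conforming, the broken forms $a_h$ and $\langle\cdot,\cdot\rangle_{\mathcal{T}_h}$ coincide with $a$ and $\langle\cdot,\cdot\rangle_\Omega$, and the consistency residuals vanish by the weak formulations of (\ref{ucp}) and (\ref{cp}). The whole difficulty is therefore the inf-sup bound, and the idea is to exhibit, for each $(u_h,\lambda_h)$, a \emph{single} test pair whose action under $A_{cp}$ (resp. $A_{uc}$) reproduces the full triple norm. The diagonal choice $(u_h,-\lambda_h)$ used in \thref{conditionnumber} already controls $\epsilon\|u_h\|^2_{H^1(\Omega)}+\gamma^2\|\lambda_h\|^2_{H^1(\Omega)}$ (plus $\|u_h\|^2_\omega$ in the unique continuation case), but it is blind to the residual terms $\mathcal{J}_h(u_h,u_h)$, $\|h\Delta_hu_h\|^2_\Omega$ and $h\|\nabla u_h\cdot\nu\|^2_{\Gamma_0}$. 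These are recovered by adding the function $\tilde w_h$ supplied by \thref{matchconforming}.

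Concretely, I would set $R^2:=\mathcal{J}_h(u_h,u_h)+h\|\nabla u_h\cdot\nu\|^2_{\Gamma_0}+\|h\Delta_hu_h\|^2_\Omega$ and test with $(v_h,w_h)=(u_h,\,-\lambda_h+\delta\tilde w_h)$ for a parameter $\delta>0$ to be fixed. Using the symmetry of $a_h$, the terms $a_h(\lambda_h,u_h)$ and $-a_h(u_h,\lambda_h)$ cancel and one is left with
\begin{equation*}
A_{cp}[(u_h,\lambda_h),(v_h,w_h)] = \epsilon\|u_h\|^2_{H^1(\Omega)}+\gamma^2\|\lambda_h\|^2_{H^1(\Omega)}+\delta\,a(u_h,\tilde w_h)-\gamma^2\delta\langle\lambda_h,\tilde w_h\rangle_\Omega .
\end{equation*}
The lower bound (\ref{lower}) gives $a(u_h,\tilde w_h)\ge R^2$, while (\ref{upper}) gives $\|\tilde w_h\|_{H^1(\Omega)}\le C_0^{1/2}R$, so a Young inequality absorbs the cross term as $\gamma^2\delta\langle\lambda_h,\tilde w_h\rangle_\Omega\le \tfrac{\gamma^2}{2}\|\lambda_h\|^2_{H^1(\Omega)}+\tfrac{\gamma^2\delta^2C_0}{2}R^2$. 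This yields
\begin{equation*}
A_{cp}[(u_h,\lambda_h),(v_h,w_h)] \ge \epsilon\|u_h\|^2_{H^1(\Omega)}+\tfrac{\gamma^2}{2}\|\lambda_h\|^2_{H^1(\Omega)}+\bigl(\delta-\tfrac{\gamma^2\delta^2C_0}{2}\bigr)R^2 .
\end{equation*}

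Fixing $\delta=1/C_0$, which is independent of $\gamma,h,\epsilon$, the coefficient of $R^2$ equals $(2-\gamma^2)/(2C_0)\ge 1/(2C_0)$ for every $\gamma\in(0,1)$, so the right-hand side is bounded below by $c_\star|||(u_h,\lambda_h)|||^2_{cp}$ with $c_\star=\min\{1,\tfrac12,\tfrac{1}{2C_0}\}$ independent of $h,\epsilon,\gamma$; recall $|||(u_h,\lambda_h)|||^2_{cp}=\epsilon\|u_h\|^2_{H^1(\Omega)}+R^2+\gamma^2\|\lambda_h\|^2_{H^1(\Omega)}$ by (\ref{triolenormcp}). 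It then remains to bound the test pair from above: since $v_h=u_h$ and $\|\gamma w_h\|^2_{H^1(\Omega)}\le 2\gamma^2\|\lambda_h\|^2_{H^1(\Omega)}+2\gamma^2\delta^2C_0R^2$ by (\ref{upper}), one gets $|||(v_h,w_h)|||_{cp}\le C_\star\,|||(u_h,\lambda_h)|||_{cp}$ with $C_\star$ independent of $h,\epsilon$. Dividing the two displays gives the inf-sup bound with constant $c_\star/C_\star$, so \thref{infsup} holds for any $\gamma\in(0,1)$; in particular the claimed $\gamma$ exists. The unique continuation case is identical once the extra coercive term $\|u_h\|^2_\omega$ (already produced by the diagonal test) is carried along and the $\Gamma_0$-term is dropped from $R$. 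For $k=1$ one has $\Delta_hu_h=0$, so $w_1=0$ in \thref{match1} and only the face correction $w_2$ is needed, making the argument a special case.

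The genuinely hard work is already contained in \thref{matchconforming}: the construction of a conforming $\tilde w_h$ in $W_h^{k+d-1}$ (resp. $V_{1h}^{k+d-1}$) that is simultaneously $H^1$-bounded by the residual (\ref{upper}) and tests the residual from below (\ref{lower}) is what forces the enrichment of the multiplier space to degree $k+d-1$ and relies on the facet/element matching of \thref{match}. Given that lemma, the only delicate point inside the present proof is the bookkeeping of the cross term $\gamma^2\delta\langle\lambda_h,\tilde w_h\rangle_\Omega$: one must choose $\delta$ (here $\delta=1/C_0$) so that it is absorbed into the coercive $\lambda_h$-term and the $R^2$-term while keeping the residual coefficient positive \emph{uniformly} in $\gamma\in(0,1)$ and independent of $h,\epsilon$.
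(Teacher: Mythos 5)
Your proof is correct and follows essentially the same route as the paper: both hinge entirely on \thref{matchconforming} and test with the pair $(u_h,\,-\lambda_h+c\,w_h)$, using the symmetry cancellation of $a_h(\lambda_h,u_h)-a_h(u_h,\lambda_h)$ and then absorbing the cross term with $\lambda_h$. The only (minor, and in fact slightly sharper) deviation is that you scale $w_h$ by $\delta=1/C_0$ and use Young's inequality, which yields the inf-sup constant uniformly for every $\gamma\in(0,1)$, whereas the paper keeps $\delta=1$ and instead restricts $\gamma$ via $3C_0\gamma^2\le\tfrac12$; you are also more explicit than the paper about the upper bound $|||(v_h,w_h)|||\le C_\star|||(u_h,\lambda_h)|||$ for the chosen test pair, which is needed to conclude.
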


\begin{proof}
For simplicity, we give details only for unique continuation problems. Recall from (\ref{A}) that 
    \begin{equation}\label{infsupconforming1}
    \begin{aligned}
        A_{uc}[(u_h,\lambda_h),(u_h,-\lambda_h+w_h)] = &||u_h||^2_\omega +\epsilon||u_h||^2_{H^1(\Omega)}\\&+a(u_h,w_h)+\gamma^2||\lambda_h-w_h||^2_{H^1(\Omega)}\\
        \ge &||u_h||^2_\omega +\epsilon||u_h||^2_{H^1(\Omega)}+\gamma^2||\lambda_h-w_h||^2_{H^1(\Omega)}\\
        &+\mathcal{J}_h(u_h,u_h)+||h\Delta_hu_h||^2_\Omega.
        \end{aligned}
    \end{equation}
    Since 
    \begin{equation*}
    \begin{aligned}
        ||\lambda_h-w_h||^2_\Omega &= ||w_h||^2_{H^1(\Omega)}+||\lambda_h||^2_{H^1(\Omega)}-2\langle w_h,\lambda_h\rangle_\Omega\\
        &\ge -3||w_h||^2_{H^1(\Omega)} +\frac{1}{2}||\lambda_h||^2_{H^1(\Omega)}\\
        &\ge \frac{1}{2}||\lambda_h||^2_{H^1(\Omega)}-3C_0\left(\mathcal{J}_h(u_h,u_h)+ ||h\Delta_hu_h||^2_\Omega\right),
    \end{aligned}
    \end{equation*}
    where the third inequality is from (\ref{upper}). Then by setting $3C_0\gamma^2\le \frac{1}{2}$ and plugging this estimate into (\ref{infsupconforming1}), we have
    \begin{equation*}
        A_{uc}[(u_h,\lambda_h),(u_h,-\lambda_h+w_h)]\ge \frac{1}{2}|||(u_h,\lambda_h)|||_{uc}|||(u_h,-\lambda_h+w_h)|||_{uc}.
    \end{equation*}
    By repetiting this process we get the same result for the Cauchy problem.
    
\end{proof}

\subsection{Reduced conforming spaces in $d\ge3$}
From \thref{match}, it is observed that $w_1$, $w_2$ live in the space 
\begin{equation*}
    V^{k+d-1}_{h,\mathcal{T}} \oplus V^{k+d-1}_{h,\mathcal{F}_C},
\end{equation*}
where $\mathcal{F}_C = \mathcal{F}_I$ or $\mathcal{F}_I\cup \mathcal{F}_{\Gamma_0}$. It is natural to consider taking only this subset of $W_h^{k+d+1}$ or $V_{1h}^{k+d-1}$ to reduce computational cost, because we do not require an optimal finite element interpolation in these two spaces. However, according to (\ref{rbounduc}) and (\ref{rboundcp}), first order interpolation is still required. Thus we propose a reduced conforming space such that \thref{consistency} and \thref{infsup} are satisfied. Let
\begin{equation}\label{reducedconforminguc}
    W_h = V^{k+d-1}_{h,\mathcal{T}}\oplus V^{k+d-1}_{h,\mathcal{F}_I}\oplus V^1_{h,\mathcal{V}_I},
\end{equation}
and
\begin{equation}\label{reducedconformingcp}
    V_{1h} = V^{k+d-1}_{h,\mathcal{T}}\oplus V^{k+d-1}_{h,\mathcal{F}_I}\oplus V^{k+d-1}_{h,\mathcal{F}_{\Gamma_0}}\oplus  V^1_{h,\mathcal{V}_I\cup\mathcal{V}_{\Gamma_0}}.
\end{equation}
It is easy to see the above direct sums hold. Since the space of piecewise linear functions can be defined at degree of freedoms with respect to vertices. The above spaces contains the full space of piecewise linear functions, and thus (\ref{rbounduc}) and (\ref{rboundcp}) are legit. Note that, (\ref{reducedconforminguc}) and (\ref{reducedconformingcp}) coincide with the full conforming spaces when $d=2$, and omit all degree of freedoms on $d'$ dimensional faces for $d' = 1,..,d-2$.
\subsection{Lifted Crouzeix-Raviart spaces} \label{liftedcr}
It can be seen, through the direct sum representation of reduced conforming spaces in (\ref{reducedconforminguc}) and (\ref{reducedconformingcp}), that the degree of freedoms in such spaces serves just the purpose of stability and shows a fair reduction in computational cost while lower order polynomial spaces are used. However, this choice of spaces is not minimal. We will give an example of a non-conforming space in 2D, such that it has less degree of freedom than the reduced conforming spaces. Let
\begin{equation*}
    \mathbf{CR}_{k}= \{v_h\in X^k_h: \int_{F}\llbracket v_h\rrbracket q dS = 0, \forall q\in \mathbf{P}_{k-1}(F), \forall F\in \mathcal{F}_I\},
\end{equation*}
and
\begin{equation}
    \mathbf{CR}_{k,\Gamma}= \{v_h\in \mathbf{CR}_k: \int_{F}v_hq dS = 0, \forall q\in \mathbf{P}_{k-1}(F), \forall F\in \mathcal{F}\backslash\mathcal{F}_\Gamma  \}.
\end{equation}
Specifically, we denote $\mathbf{CR}_{k,0}$ when $\Gamma =\emptyset$ to keep consistency with most literature. The above space are well-known Crouzeix-Raviart space \cite{crouzeix1973conforming} with generalization to general $k$th order polynomials. We restrict ourselves here to the case when $k$ is odd. A basis is constructed through the following face-bubble functions (see \cite[Definition 3.2]{carstensen2022critical} and references therein)
\begin{equation*}
    B_{k,F}^{CR} = \left\{\begin{array}{lc}
       P_k(1-2\lambda_{K,z_{K,F}})  & \text{on $K$ for } K\in\mathcal{T}_F, \\
        0 & \text{else,}
    \end{array}\right.
\end{equation*}
where $\mathcal{T}_F$ is the collection of triangles having $F$ as one of their faces, $P_k$ is the Legendre polynomial of order $k$, $z_{K,F}$ is the vertex in $K$ opposite to $F$, and $\lambda_{K,z_{K,F}}$ be the barycentric coordinate in $K$ with respect to the vertex $z_{K,F}$. It can be observed that 
\begin{equation}\label{BKF}
    \begin{array}{cc}
      B_{k,F}^{CR}|_F = 1;   & \int_{F'} B^{CR}_{k,F}qdx = 0, \forall q\in \mathbf{P}_{k-1}(F'),\forall F'\neq F.
    \end{array}
\end{equation}
Moreover, let $V_{nc,\mathcal{F}_C}^k = \Span\{B^{CR}_{k,F}:F\in \mathcal{F}_C\}$, where $\mathcal{F}_C$ is a subset of $\mathcal{F}$. Then a direct sum decomposition for $\mathbf{CR}_{k,\Gamma}$ is 
\begin{equation}\label{crgamma}
    \mathbf{CR}_{k,\Gamma} = V_{h,\mathcal{T}}^k \oplus V_{h,\mathcal{F}_I}^k\oplus V_{h,\mathcal{F}_\Gamma}^k\oplus V_{nc,\mathcal{F}_I}^k\oplus V_{nc,\mathcal{F}_\Gamma}^k.
\end{equation}
Now we lift the order of polynomials with respect to the degree of freedoms interior to triangles in $\mathbf{CR}_{k,\Gamma}$ by $1$, namely,
\begin{equation}\label{nonconuc}
    W_h = V^{k+1}_{h,\mathcal{T}}\oplus V^{k}_{h,\mathcal{F}_I}\oplus V_{nc,\mathcal{F}_I}^k,
\end{equation}
and
\begin{equation}\label{nonconcp}
    V_{1h} = V^{k+1}_{h,\mathcal{T}}\oplus V^{k}_{h,\mathcal{F}_I}\oplus V^{k}_{h,\mathcal{F}_{\Gamma_0}}\oplus  V_{nc,\mathcal{F}_I}^k\oplus  V_{nc,\mathcal{F}_{\Gamma_0}}^k.
\end{equation}
It is clear that 
\begin{equation*}
\begin{array}{cc}
    \mathbf{CR}_{k,0}\subsetneq W_h\subsetneq \mathbf{CR}_{k+1,0}, &  \mathbf{CR}_{k,\Gamma_0}\subsetneq V_{1h} \subsetneq\mathbf{CR}_{k+1,\Gamma_0}.
\end{array}
\end{equation*}
This shows $W_h$ and $V_{1h}$ contain the conforming space of polynomials of order at most $k$, so (\ref{rbounduc}) and (\ref{rboundcp}) are valid. Moreover, the dimension of (\ref{nonconuc}) and (\ref{nonconcp}) are strictly less than the reduced conforming spaces (\ref{reducedconforminguc}) and (\ref{reducedconformingcp}). This can be shown by observing that 
\begin{equation*}
    \dim{V^{k}_{h,\mathcal{F}_C}\oplus V_{nc,\mathcal{F}_C}^k} = \dim{V^{k+1}_{h,\mathcal{F}_C}},
\end{equation*}
where $\mathcal{F}_C = \mathcal{F}_I$ or $\mathcal{F}_{\Gamma_0}$. Moreover, we show the choice in (\ref{nonconuc}) and (\ref{nonconcp}) fulfills \thref{infsup} and \thref{consistency} in Appendix \ref{veri}.

\subsection{The simplest unregularized schemes}
In this section we discuss the simplest discrete schemes where the $\epsilon-$regularization (cf. (\ref{dissys}), (\ref{dissys2})) may be omitted. The scheme applies the idea in \cite{burman2017stabilized} but a conforming primal space is used, so that a further penalty of jumps on interior facets can be ommited. From now on we need to assume the exact solution $u\in H^2(\Omega)$, which makes the data terms $f_h,\phi_h$ exact. The simplest systems read:
\begin{equation}\label{nonregdissys}
    \left\{\begin{array}{rll}
        a_h(\lambda_h,v_h)+ (u_h,v_h)_\omega &= (q,v_h)_\omega & \forall v_h \in V_h \\
         a_h(u_h,w_h)-\gamma^2\langle \lambda_h,w_h\rangle_{\mathcal{T}_h}&=(f,w_h)_\Omega & \forall w_h\in W_h
    \end{array}\right.,
\end{equation}
and
\begin{equation}\label{nonregdissys2}
    \left\{\begin{array}{rll}
        a_h(\lambda_h,v_h) &= 0 & \forall v_h \in V_{0h} \\
         a_h(u_h,w_h)-\gamma^2\langle \lambda_h,w_h\rangle_\Omega&=(f,w_h)_\Omega+(\phi,w_h)_{\Gamma_0} & \forall w_h\in V_{1h}
    \end{array}\right..
\end{equation}
Let $k=1$, which means $V_h$ ($V_{0h}$) is the conforming piecewise linear functions. Set $W_h = \mathbf{CR}_{1,0}$ ($V_{1h}=\mathbf{CR}_{1,\Gamma_0}$). Note that this is a special case of Section \ref{liftedcr} (these spaces are not lifted because $\mathbf{P}_2(K)$ does not have any interior degree of freedom). Moreover, the validity of this pair can be extended to $\Omega$ in any dimension $d$.

Recall the norms $|||(\cdot,\cdot)|||_{uc}$ and $|||(\cdot,\cdot)|||_{cp}$ (cf. (\ref{triplenormuc}), (\ref{triolenormcp})). Since we no longer use the regularization term $\epsilon ||u_h||^2_{H^1(\Omega)}$ but we still need control its $H^1$-norm, we set the $\epsilon ||u_h||^2_\Omega$ term in the triple norms as a mesh-dependent term $||hu_h||^2_{H^1(\Omega)}$. Indeed, for piecewise affine approximations, $||u_h||_{H^1(\Omega)}$ is bounded by the interior jump of gradients and $||u||_\omega$ (or normal derivative on $\Gamma_0$ for the Cauchy problem), as we will show in \thref{poincare}. Note if \thref{infsup} and \thref{consistency} hold, we can still derive optimal convergence from the schemes, since $||hu_h||^2_{H^1(\Omega)}$ is adapted from the optimal choice of $h$ in \thref{conv}.\\

Now we show that these pairs produce stable and consistent schemes. First, consistency can be verified by redoing the process as in Section \ref{liftedcr} since the proofs for consistency do not depend on dimension. For stability, we have
\begin{prop}\thlabel{simplematch}
    If $u_h\in V_h^1$, then there exists a function $w_h\in \mathbf{CR}_{1,0}$ , such that
    \begin{equation}
    \begin{aligned}
        ||w_h||^2_{H^1(\Omega)} &\le C\mathcal{J}_h(u_h,u_h),\\
        a(u_h,w_h)&\ge \mathcal{J}_h(u_h,u_h).
    \end{aligned}
    \end{equation}
    If $u_h\in V^1_{0h}$, then there exists a function $w_h\in \mathbf{CR}_{1,\Gamma_0}$ , such that
    \begin{equation}
    \begin{aligned}
        ||w_h||^2_{H^1(\Omega)} &\le C(\mathcal{J}_h(u_h,u_h)+||h^\frac{1}{2}\nabla_hu_h\cdot n||^2_{\Gamma_0}),\\
        a(u_h,w_h)&\ge \mathcal{J}_h(u_h,u_h)+||h^\frac{1}{2}\nabla_hu_h\cdot n||^2_{\Gamma_0}.
    \end{aligned}
    \end{equation}
    where $n$ is the outward normal vector on $\Gamma_0$, and 
    
\end{prop}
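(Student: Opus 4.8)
The plan is to construct the desired function $w_h$ explicitly using the Crouzeix--Raviart face-bubble basis, exactly analogously to the two-term construction in \thref{matchconforming}, but now with only a single term because the lowest-order case carries no interior degrees of freedom. First I would recall from the direct sum decomposition (\ref{crgamma}) that $\mathbf{CR}_{1,0} = V_{h,\mathcal{T}}^1 \oplus V_{h,\mathcal{F}_I}^1 \oplus V_{nc,\mathcal{F}_I}^1$, and observe that for piecewise affine $u_h \in V_h^1$ we have $\Delta_h u_h = 0$ on each simplex, so the volume term $\|h\Delta_h u_h\|_\Omega$ that appeared in \thref{matchconforming} simply drops out. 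Consequently only the jump-matching function (the analogue of $w_2$) is needed. I would set $w_h$ to be, on each interior face $F \in \mathcal{F}_I$, a suitable multiple of the nonconforming bubble $B_{1,F}^{CR}$ scaled so that its $L^2(F)$-pairing reproduces $h\llbracket \nabla u_h \cdot n_F\rrbracket$ (which is a constant on $F$ since $\nabla u_h$ is piecewise constant), i.e. the lowest-order instance of \thref{match}(1) together with \thref{match1}(2).

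The key computation is then the coercivity estimate. Testing $a(u_h, w_h)$ and integrating by parts elementwise gives, since $\Delta_h u_h = 0$, precisely
\begin{equation*}
    a(u_h, w_h) = \sum_{F \in \mathcal{F}_I} \int_F \llbracket \nabla u_h \cdot n_F \rrbracket\, w_h \, dS,
\end{equation*}
and by the matching property (\ref{w2L2})-type construction this equals $\mathcal{J}_h(u_h,u_h)$ up to the chosen normalization, which I would fix to make the constant exactly $1$ as stated. For the upper bound I would invoke the inverse trace inequality of \thref{invtrace} together with the stability estimate (\ref{w2L2}) from \thref{match1} to bound $\|w_h\|_{H^1(\Omega)}$ by $\mathcal{J}_h(u_h,u_h)^{1/2}$, noting the gradient of $w_h$ is controlled by $h^{-1}\|w_h\|_\Omega$ via the inverse inequality (\ref{inv}).

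For the Cauchy case one repeats the construction but now the face set includes $\mathcal{F}_{\Gamma_0}$, so $w_h \in \mathbf{CR}_{1,\Gamma_0}$ picks up an additional boundary contribution. The elementwise integration by parts then produces a genuine boundary term $\sum_{F \in \mathcal{F}_{\Gamma_0}} \int_F (\nabla u_h \cdot n)\, w_h\, dS$ in addition to the interior jump sum, and matching $w_h$ on $\Gamma_0$ against $h\,(\nabla u_h \cdot n)$ yields the extra $\|h^{1/2}\nabla_h u_h \cdot n\|_{\Gamma_0}^2$ term in both the coercivity and the stability bounds. The main subtlety I anticipate is purely bookkeeping: ensuring that a single choice of scaling constant simultaneously delivers the clean lower bounds (with constant $1$) and the corresponding upper bound with an $h$-independent constant $C$, and confirming that all normalizations are uniform across the mesh by the shape-regularity and quasi-uniformity assumptions. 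Since $\Delta_h u_h$ vanishes identically there is no cross term to absorb via Young's inequality, so the argument is in fact strictly simpler than \thref{matchconforming}, and the constants follow directly without the $\alpha,\beta$ balancing used there.
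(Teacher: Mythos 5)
Your proposal is correct and follows essentially the same route as the paper, whose proof is simply the remark that the construction of \thref{matchconforming} adapts directly to the lowest-order Crouzeix--Raviart space. Your explicit observation that $\Delta_h u_h \equiv 0$ for piecewise affine $u_h$, so that only the jump-matching bubble term survives and no Young-inequality balancing is needed, is exactly the simplification the paper has in mind (only note that the relevant inverse trace inequality for the nonconforming bubbles is \thref{invtracecr} rather than \thref{invtrace}, though the scaling argument is identical).
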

\begin{proof}
    The proof can be easily adapted from \thref{matchconforming}. The reason is that the first order Crouzeix-Raviart finite element can be easily defined as Lagrange finite element. See \cite{crouzeix1973conforming}.
    
\end{proof}
Now we give a Poincaré type inequalities to finalize verifying \thref{infsup}.
\begin{lemma}\thlabel{poincare}
    If $u_h\in V^1_h$, then there exists some constant $C>0$, such that
    \begin{equation*}
        ||hu_h||^2_{H^1(\Omega)} \le C(\mathcal{J}_h(u_h,u_h)+||u_h||^2_\omega).
    \end{equation*}
    If $u\in V^1_{0h}$, then there exists some constant $C>0$, such that
    \begin{equation*}
        ||hu_h||^2_{H^1(\Omega)} \le C(\mathcal{J}_h(u_h,u_h)+||h^\frac{1}{2}\nabla u_h\cdot n_\gamma||^2_{\Gamma_0}).
    \end{equation*}
\end{lemma}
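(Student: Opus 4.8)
The plan is to prove both inequalities by first reducing the full (scaled) $H^1$-norm to its gradient part, and then establishing a discrete Poincar\'e--Friedrichs inequality for the piecewise constant field $g:=\nabla u_h$. First I would dispose of the $L^2(\Omega)$-contribution to $||hu_h||^2_{H^1(\Omega)}$. For the unique continuation case one has the continuous Poincar\'e inequality $||u_h||_\Omega \le C(||\nabla u_h||_\Omega + ||u_h||_\omega)$ (Poincar\'e with respect to the subdomain $\omega$), while for the Cauchy case the homogeneous trace $u_h|_{\Gamma_0}=0$ yields the Friedrichs inequality $||u_h||_\Omega \le C||\nabla u_h||_\Omega$. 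Multiplying by $h^2$ and using $h<1$ to absorb $h^2||u_h||^2_\omega \le ||u_h||^2_\omega$, it remains in both cases to bound $h^2||\nabla u_h||^2_\Omega$ by the respective right-hand side.

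The second step exploits conformity. Since $u_h\in V^1_h$ (or $V^1_{0h}$) is continuous and piecewise affine, $g=\nabla u_h$ is piecewise constant and its tangential trace is continuous across every $F\in\mathcal{F}_I$; hence the full gradient jump satisfies $\llbracket\nabla u_h\rrbracket = \llbracket\nabla u_h\cdot n\rrbracket\, n$, so that $\mathcal{J}_h(u_h,u_h)$ controls the \emph{complete} jump seminorm of $g$, not merely its normal part. I would then anchor $g$ on a subset of elements. For the unique continuation problem I use that on each affine element $K$ the quantities $||u_h||^2_K$ and $h^2||\nabla u_h||^2_K$ are equivalent (by orthogonality of the constant and linear parts of $u_h$ on $K$), which gives $h^2||\nabla u_h||^2_\omega \le C||u_h||^2_\omega$. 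For the Cauchy problem the condition $u_h|_{\Gamma_0}=0$ forces $\nabla u_h$ to be purely normal on the faces $F\subset\Gamma_0$, so that $||h^{\frac12}\nabla u_h\cdot n||^2_{\Gamma_0}$ controls $g$ on the boundary strip adjacent to $\Gamma_0$.

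The heart of the matter, and the \emph{main obstacle}, is then a discrete Poincar\'e--Friedrichs inequality for the piecewise constant field, namely $h^2||g||^2_\Omega \le C(\mathcal{J}_h(u_h,u_h) + \mathrm{(anchor)})$ with a constant independent of $h$. On a shape-regular quasi-uniform mesh this is the finite volume discrete Poincar\'e estimate: writing $||g||^2_\Omega \simeq h^d\sum_K |g_K|^2$ and $\mathcal{J}_h(u_h,u_h)\simeq h^d\sum_{F\in\mathcal{F}_I}|\llbracket g\rrbracket_F|^2$, the claim reduces to $\sum_K |g_K|^2 \le C h^{-2}\big(\sum_{F\in\mathcal{F}_I}|\llbracket g\rrbracket_F|^2 + \mathrm{(anchor)}\big)$, where the factor $h^{-2}$ is precisely the Poincar\'e constant of the element-connectivity graph on a domain of diameter $O(1)$. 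With anchoring on the full boundary this inequality is classical; the two versions needed here, anchored only on the interior subdomain $\omega$ or on the boundary portion $\Gamma_0$, follow from a connectedness/telescoping argument along paths of neighbouring simplices (alternatively, a scaling--compactness argument), the constant depending on the geometry of $\omega$ (respectively $\Gamma_0$) and on the shape-regularity of the family $\{\mathcal{T}_h\}_h$. I expect the careful tracking of the $h$-powers in the anchoring term, and the verification that partial anchoring still yields an $h$-uniform constant on a connected mesh, to be the delicate points.

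Combining the three steps concludes the proof. Indeed, for the Cauchy problem the discrete Poincar\'e--Friedrichs inequality with the $\Gamma_0$-anchor gives $h^2||\nabla u_h||^2_\Omega \le C(\mathcal{J}_h(u_h,u_h) + ||h^{\frac12}\nabla u_h\cdot n||^2_{\Gamma_0})$, which together with the first-step reduction yields the stated bound; this mirrors the template of \thref{simplematch}. For the unique continuation problem the same computation with the $\omega$-anchor gives $h^2||\nabla u_h||^2_\Omega \le C(\mathcal{J}_h(u_h,u_h) + h^2||\nabla u_h||^2_\omega) \le C(\mathcal{J}_h(u_h,u_h) + ||u_h||^2_\omega)$, where the last inequality is the local affine equivalence of the second step, and the reduction of the first step then delivers the claim.
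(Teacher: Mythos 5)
Your argument is correct and is essentially the mechanism behind the proof the paper delegates to the citation of \cite[Lemma 2]{burman2018solving}: since $u_h$ is conforming and piecewise affine, $\nabla u_h$ is piecewise constant with purely normal inter-element jumps, so $\mathcal{J}_h(u_h,u_h)$ controls the full jump of the gradient, and a jump-anchored discrete Poincar\'e--Friedrichs inequality for piecewise constants (anchored on a fixed subset of $\omega$, respectively on the $\Gamma_0$-faces with the $h^{-1}$ weight) yields $h^2\|\nabla u_h\|^2_\Omega\le C(\mathcal{J}_h(u_h,u_h)+\mathrm{anchor})$; the continuous Poincar\'e/Friedrichs inequality then recovers the $L^2$ part. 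Two small points deserve tightening. First, $\|u_h\|_K$ and $h\|\nabla u_h\|_K$ are \emph{not} equivalent on an element (constants have zero gradient); you only need, and only may use, the one-sided inverse inequality $h\|\nabla u_h\|_K\le C\|u_h\|_K$, and since $\omega$ need not be mesh-aligned you should anchor on a fixed union of elements $B\subset\omega$ (available for $h$ small by quasi-uniformity) rather than on $\omega$ itself. Second, the partial-anchoring discrete Poincar\'e--Friedrichs inequality with $h$-uniform constant need not be re-derived by telescoping: it is classical (Brenner's Poincar\'e--Friedrichs inequalities for piecewise $H^1$ functions, or the broken Friedrichs inequalities in \cite{di2011mathematical}), and in the Cauchy case the anchor must be taken in the weighted boundary-trace form $\sum_{F\subset\Gamma_0}h^{-1}\|\nabla u_h\cdot n\|^2_F$ — exactly $h^{-2}\|h^{1/2}\nabla u_h\cdot n\|^2_{\Gamma_0}$ — since anchoring on the $O(h)$-measure strip without this weight would not give a uniform constant; your final bookkeeping already reflects this correctly.
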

\begin{proof}
    The proof can be adapted from \cite[Lemma 2]{burman2018solving}.
\end{proof}
With the inequality given above we have
\begin{equation*}
     a(u_h,w_h)+||u_h||^2_\omega \ge C_1\mathcal{J}_h(u_h,u_h) + C_2||hu_h||^2_{H^1(\Omega)}
\end{equation*}
for the unique continuation problem and 
\begin{equation*}
     a(u_h,w_h)\ge C_1(\mathcal{J}_h(u_h,u_h) +||h^\frac{1}{2}\nabla u_h\cdot n_\gamma||^2_{\Gamma_0})+ C_2||hu_h||^2_{H^1(\Omega)}
\end{equation*}
for the Cauchy problem. Then by repeting the steps in proving \thref{infsupforfullconforming}, we can verify \thref{infsup}.
\begin{remark}
    We have assumed $u\in H^2(\Omega)$ in this section. This assumption is necessary as we can see in \thref{poincare}, only $||hu_h||^2_{H^1(\Omega)}$ can be bounded by the remaining terms in (\ref{triplenormuc}) or (\ref{triolenormcp}). If $u\in H^s(\Omega)$ with $s<2$, then we need to have a bound for $||h^{s-1}u_h||^2_{H^1(\Omega)}$ which can not be provided by those terms.
\end{remark}

\begin{figure}[b]
    \centering
    \caption{Convergence rates for different test spaces and oscillation rates}%
    \subfloat[\centering $m = 1$, $n=1$.]{{\includegraphics[width=5.8cm]{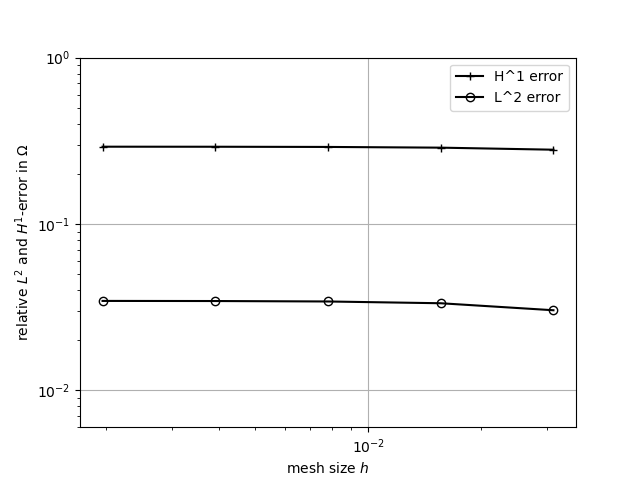} }}%
    \qquad
    \subfloat[\centering $m = 2$, $n=1$.]{{\includegraphics[width=5.8cm]{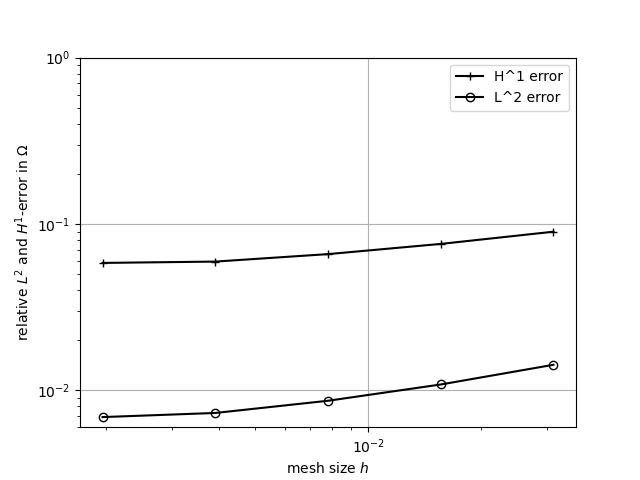} }}%
    \qquad
    \subfloat[\centering $m = 1$, $n=5$.]{{\includegraphics[width=5.8cm]{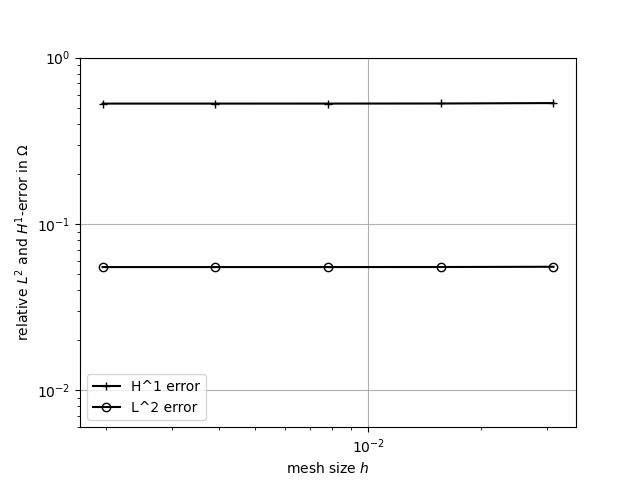} }}%
    \qquad
    \subfloat[\centering $m = 2$, $n=5$.]{{\includegraphics[width=5.8cm]{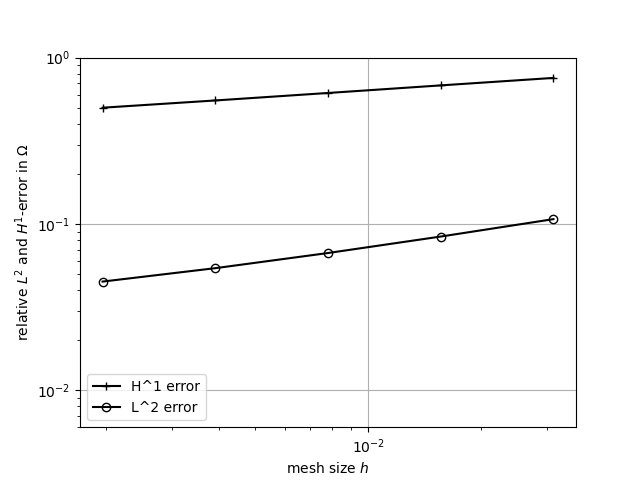} }}%
    \label{convergencerates}
\end{figure}
\section{Numerical experiments}
In this section we present numerical experiments that we conducted to illustrate the scheme we proposed. We set
\begin{equation}
\begin{array}{ccc}
    \Omega = [0,1]\times [0,1] & \text{and}  & \Gamma_0 = (\{0\}\times [0,1])\cup ([0,1]\times \{0\})
\end{array}
\end{equation}
The function we use to approximate is that in the classic Hardamard example $u = \frac{1}{n^2}\sin{x}\sinh{y}$, which is the solution to the Cauchy problem
\begin{equation}\label{hardamardex}
    \left\{\begin{array}{rlc}
         -\Delta u& =0 & \texttt{in $\Omega$} \\
         u &=0 & \texttt{on $\Gamma_0$}\\
         \nabla u\cdot \nu &= -\frac{1}{n}\sinh{ny} & \texttt{on $\{0\}\times [0,1]$}\\
         \nabla u\cdot \nu &= -\frac{1}{n}\sin{nx} & \texttt{on $[0,1]\times \{0\}$}
    \end{array}.
    \right.
\end{equation}

We use the scheme described in (\ref{dissys2}) to numerically approach the solution to (\ref{hardamardex}) with different oscillation rate $n$. We use the full conforming space described in Section \ref{fullconforming} by setting $V_{0h}=V^1_{0h}$ and $V_{1h} = V^m_{1h}$ (cf. (\ref{H1spaces}) with different $m$. Moreover, we set the Tikhonov regularization parameter $\epsilon=10^{-4}$ and choose suitable mesh size $h$ such that the scheme does not achieve the precision determined by $\epsilon$, to compare the convergence behavior of different test spaces. In Figure \ref{convergencerates}, we observe that the $L^2$ error and $H^1$ error in the whole domain $\Omega$ converge when $m=2$ but not when $m=1$. This is consistent with the stable pairs given in Section \ref{fullconforming}. Note that according to \thref{conv}, the convergence rate is only logarithmic with the estimate 
\begin{equation*}
        ||u-u_h||_{\Omega} \le \frac{C}{(-\log{h})^\tau}||u||_{H^2(\Omega)}.
    \end{equation*}

\begin{figure}[h]
    \centering
    \caption{Error comparison for different test spaces. Observe different scales.}%
    \subfloat[\centering  $m=1$.]{{\includegraphics[width=3.6cm]{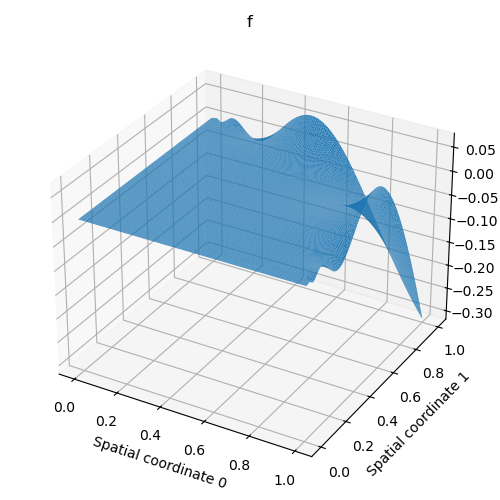} }}%
    \qquad
    \subfloat[\centering  $m=2$.]{{\includegraphics[width=3.6cm]{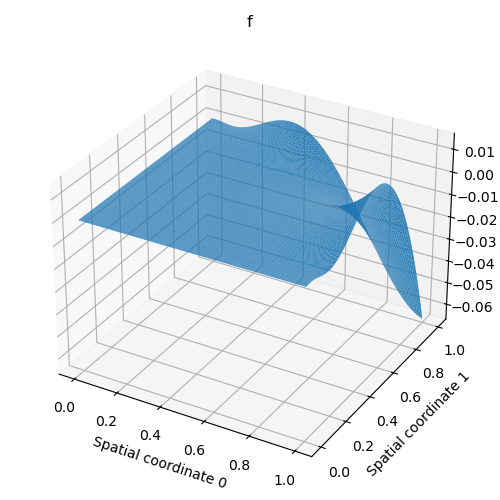} }}%
    \qquad
    \subfloat[\centering  $m=3$.]{{\includegraphics[width=3.6cm]{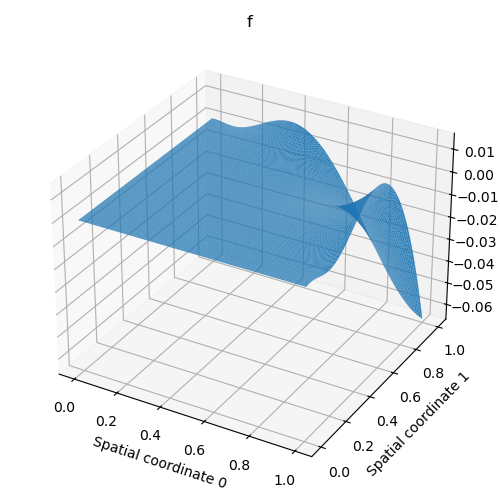} }}%
    \label{surface_plots}
\end{figure}
In Figure \ref{surface_plots}, we compare the error functions $u-u_h$ when choosing conforming spaces with different order of polynomials, as $V_{0h}^1-V_{1h}^m$ pair. In this example we fix the exact solution $u$ defined in (\ref{hardamardex}) with $n=1$ and use the mesh with $128\times 128$ (with respect to $x$, $y$) elements. We observe that the $\mathbf{P}_1$-$\mathbf{P}_2$ pair does provide additional stability we need than $\mathbf{P}_1$-$\mathbf{P}_1$ pair. The $\mathbf{P}_1$-$\mathbf{P}_3$ pair is also stable, but does not provide additional error reduction comparing to $\mathbf{P}_1$-$\mathbf{P}_2$ pair. This is consistent with the theoretical result in Section \ref{fullconforming}.\\

It can be also seen from Figure \ref{surface_plots} that the error function is relatively large near $(x,y) = (1,1)$. This is generally why only the logarithmic convergence is guaranteed by the scheme. However, if we restrict ourselves in an interior region, a Hölder type convergence rate can be expected. In Figure \ref{interiorconv}, we set the interior area $G = [0,0.8]\times [0,0.5]$, $n=5$ and $\epsilon = 10^{-4}$. It can be observed while the $\mathbf{P}_1$-$\mathbf{P}_1$ pair still shows no convergence while the $\mathbf{P}_1$-$\mathbf{P}_2$ pair presents a convergence rate of about $h^{0.3}$ under $H^1$ norm.

\begin{figure}[t]
    \centering
    \caption{Convergence rates comparison in the interior region $G$}%
    \subfloat[\centering  $m=1$.]{{\includegraphics[width=5.8cm]{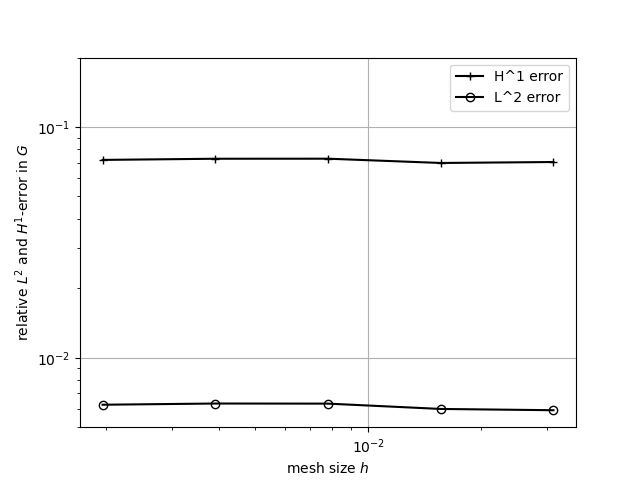} }}%
    \qquad
    \subfloat[\centering  $m=2$.]{{\includegraphics[width=5.8cm]{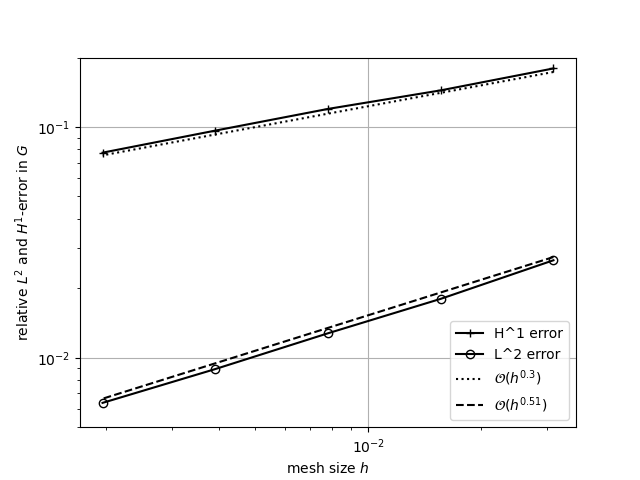} }}%
    \qquad
    \label{interiorconv}
\end{figure}
\appendix
\section{Equivalence of norms}\label{equinorms}
Now we give the proof that to bound $d(u_h-u)$ (c.f. (\ref{datacp})), it is equivalent to bound the operator norm for $a(u-u_h,v)$ on $V_1'$. The idea to prove this statement is to construct the Generalized Cauchy problem in the form (\ref{generalcp}) that $u_h-u$ is a solution to, and then bound the right-hand side with the operator norm of $a(u-u_h,v)$ on $V_1'$.\\

First, define $l_h: V_1'\to V_1$:
\begin{equation}
    l_h(v) = a(u_h-u,v).
\end{equation}
Consider a different inner product $(\cdot,\cdot)_{V_1}$ (other than the natural $H^1$ inner product) on $V_1$ defined as:
\begin{equation}
    (v_1,v_2)_{V_1} = \langle v_1,v_2\rangle_\Omega + (v_1,v_2)_{\Gamma_0}.
\end{equation}
Then by Riesz representation, there exists $\tilde v\in V_1$, such that
\begin{equation}
    l_h(v) = \langle \tilde v,v\rangle_\Omega + (\tilde v,v)_{\Gamma_0}, \forall v\in V_1.
\end{equation}
Taking $v = \tilde v$ we have
\begin{equation}\label{tildevbound}
    ||\tilde v||_{H^1(\Omega)}^2+||\tilde v||^2_{\Gamma_0}\le ||l_h||_{V_1'}||\tilde v||_{H^1(\Omega)}.
\end{equation}
However, we could also see that $u-u_h$ is a solution to the problem 
\begin{equation}
    \int_\Omega \nabla (u-u_h)\cdot\nabla vdx= \int_{\Gamma_0}\tilde v vdS +\int_\Omega \tilde v v + \nabla \tilde v \cdot \nabla v dx.
\end{equation}
This equation satisfies the problem format in (\ref{generalcp}) with the correspondence
\begin{equation*}
    \begin{aligned}
        f & = \tilde v\\
        F &= \nabla \tilde v\\
        \phi &= \tilde v|_{\Gamma_0}.
    \end{aligned}
\end{equation*}
This gives that by (\ref{datacp})
\begin{equation*}
\begin{aligned}
    d(u_h-u) &= ||\tilde v||_\Omega + ||\nabla \tilde v||_\Omega + ||\tilde v||_{\Gamma_0}\le C||l_h||_{V_1'},
\end{aligned}
\end{equation*}
where the inequality is derived feom (\ref{tildevbound}).

\section{Verification of feasibility for lifted Crouzeix-Raviart spaces}\label{veri}

\begin{lemma}[Inverse trace inequality for lifted Crouzeix-Raviart space]\thlabel{invtracecr}
    Let $p\in V^k_{h,\mathcal{F}}\oplus V_{nc,\mathcal{F}}^k$, then there exists some constant $C>0$ independent of $h$ (but depends on $k$), such that
    \begin{equation*}
        ||p||_\Omega \le C\sum_{F\in\mathcal{F}}||h^\frac{1}{2}p||_F
    \end{equation*}
\end{lemma}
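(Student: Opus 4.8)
The plan is to follow the template of the conforming inverse trace inequality, Proposition \thref{invtrace}: reduce the global estimate to a single–element bound on the reference simplex, prove that bound by a finite–dimensional norm–equivalence argument, and then recover the claim by affine scaling and summation over $\mathcal{T}$. Concretely, I would first fix $K\in\mathcal{T}$, pull the restriction $p|_K$ back to the reference triangle $\hat K$, and show that there is a constant $C=C(k)$, independent of $h$, with
\begin{equation*}
\|p\|_{\hat K}^2 \le C\sum_{F\subset\partial\hat K}\|p\|_F^2 .
\end{equation*}
Since the local space $S_{\hat K}:=\Span\{B^k_N|_{\hat K}: N\in F^o,\ F\subset\partial\hat K\}\oplus\Span\{B^{CR}_{k,F}|_{\hat K}: F\subset\partial\hat K\}$ is a fixed finite–dimensional subspace of $\mathbf{P}_k(\hat K)$, both $p\mapsto\|p\|_{\hat K}$ and $p\mapsto(\sum_{F}\|p\|_F^2)^{1/2}$ are candidate norms on it, so the estimate is equivalent to injectivity of the trace map $\Phi:p\mapsto(p|_F)_{F\subset\partial\hat K}$ on $S_{\hat K}$; equivalence of norms on a finite–dimensional space then yields the bound with a constant depending only on $k$ and the reference geometry.

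The crux, and the genuinely new point relative to Proposition \thref{invtrace}, is the injectivity of $\Phi$. In the purely conforming case this was immediate from nodal evaluation, but the Crouzeix–Raviart bubbles $B^{CR}_{k,F}$ do not vanish on the other faces, so a pure nodal argument fails. I would argue as follows. Suppose $p\in S_{\hat K}$ with $p|_{\partial\hat K}=0$ and split $p=p_c+p_{nc}$, where $p_c\in\Span\{B^k_N\}$ is the edge–interior conforming part and $p_{nc}=\sum_F d_F B^{CR}_{k,F}$ the bubble part. Because every interior face function $B^k_N$ vanishes at the vertices, $p_c$ does too, so $p_{nc}$ must vanish at all three vertices of $\hat K$. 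Using that, for $k$ odd, $B^{CR}_{k,F_i}(V_j)=P_k(1-2\delta_{ij})=1-2\delta_{ij}$, the $3\times3$ matrix of vertex values of the three bubbles is $J-2I$ (with $J$ the all–ones matrix), whose eigenvalues are $1,-2,-2$; it is therefore invertible, forcing $d_F=0$ for every $F$ and $p_{nc}\equiv0$. It then remains that $p=p_c$ vanishes on each edge $F$, i.e. $\sum_{N\in F^o}c_N B^k_N|_F=0$; since the $B^k_N|_F$ are the linearly independent one–dimensional edge Lagrange functions, all $c_N=0$ and hence $p=0$. This establishes injectivity and, with it, the local estimate.

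Finally I would transfer the reference estimate to a general $K$ by affine scaling, producing $\|p\|_K^2\le C\,h\,\|p\|_{\partial K}^2$ exactly as in Proposition \thref{invtrace}, and sum over all simplices; since each face belongs to at most two elements, the element–wise boundary contributions are controlled by $h\sum_{F\in\mathcal{F}}\|p\|_F^2$, and passing from the $\ell^2$ to the $\ell^1$ sum on the right gives the stated inequality $\|p\|_\Omega\le C\sum_{F\in\mathcal{F}}\|h^{1/2}p\|_F$. The main obstacle is the injectivity step above; the one bookkeeping subtlety I would watch is the double–valuedness of $p$ on interior faces, which I would handle by keeping everything element–wise (using the one–sided trace $p|_K$) until the final summation, so that each one–sided contribution is properly accounted for on the right–hand side.
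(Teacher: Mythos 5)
Your proposal is correct and follows essentially the same route as the paper: reduce to the reference element, use finite-dimensional norm equivalence for the local space spanned by the edge-interior Lagrange functions and the Crouzeix--Raviart bubbles, and then scale and sum over elements. The only (cosmetic) difference is that you verify the key linear-independence/injectivity fact via the $3\times3$ vertex-value matrix $J-2I$ of the bubbles, whereas the paper checks independence face by face using the endpoint values $P_k(\pm1)=\pm1$ for odd $k$ --- the same underlying observation.
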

\begin{proof}
    The idea follows from \thref{invtrace}. Consider the pull-back of the restriction of $p$ on some triangle $K$. We have
    \begin{equation*}
        p = \sum_{i=0}^2 \left(c_{F_i} B^{CR}_{k,F_i} +\sum_{j=1}^{k-1}c_{N_{j,F_i}}B^k_{N_{j,F_i}}\right),
    \end{equation*}
    where $N_{j,F_i}$ represents the $j$-th node on the face $F_i$. By the invertibility of the Gram matrix there exists some constant $C>0$, such that
    \begin{equation}\label{K<L2}
        ||p||^2_{\hat{K}} \le C \sum_{i=0}^2 c^2_{F_i}+\sum_{j=1}^{k-1}c^2_{N_{j,F_i}}.
    \end{equation}
    Moreover, on each face $F_i$, we have
    \begin{equation}\label{pFi}
        p = \sum_{i=0}^2 \left(c_{F_i} B^{CR}_{k,F_i} +\sum_{j=1}^{k-1}c_{N_{j,F_i}}B^k_{N_{j,F_i}} + (c_{F_l}-c_{F_m})B^{CR}_{k,F_l}\right),
    \end{equation}
    where $l,m\in\{0,1,2\}$ are the two indices other than $i$. This equality has used the fact that the restriction of $B^k_{N_{j,F_l}}$ and $B^k_{N_{j,F_m}}$ are Legendre polynomials with different sign. Moreover, by (\ref{BKF}), the above $k+1$ functions are linear independent since the linear combination of the first $k$ functions takes the same value at both end points of $F_i$, while the last function takes opposite values. This shows that the Gram matrix with respect to these functions on $F_i$ is invertible, and there exists some constant $c>0$, such that
    \begin{equation}\label{L2<F}
        ||p||^2_{F_i} \ge c\sum_{i=0}^2 \left(c_{F_i}^2 +\sum_{j=1}^{k-1}c_{N_{j,F_i}}^2 + (c_{F_l}-c_{F_m})^2\right).
    \end{equation}
Then by combining (\ref{K<L2}) and (\ref{L2<F}) and the standard scaling process we obtain the desired result.

\end{proof}
\begin{theorem}\thlabel{infsupnoncon}
     With the $W_h$ and $V_{1h}$ chosen in (\ref{nonconuc}) and (\ref{nonconcp}), there exists some $\gamma>0$, such that \thref{infsup} is satisfied.
\end{theorem}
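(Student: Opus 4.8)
The plan is to follow the blueprint of the conforming case (\thref{match1}, \thref{matchconforming} and \thref{infsupforfullconforming}) and reduce everything to constructing, for a given $u_h\in V_h^k$ (resp. $V_{0h}^k$), a test function $w_h\in W_h$ (resp. $V_{1h}$) realising the lower bound $a_h(u_h,w_h)\gtrsim \mathcal{J}_h(u_h,u_h)+\|h\Delta_h u_h\|_\Omega^2$ (plus the $\Gamma_0$ term for the Cauchy problem) together with the matching upper bound $\|w_h\|_{H^1(\mathcal{T}_h)}^2\lesssim$ the same quantity. Once such $w_h$ is available, the inf-sup estimate follows verbatim as in \thref{infsupforfullconforming}, by testing $A_{uc}$ (resp. $A_{cp}$) against $(u_h,-\lambda_h+w_h)$ and absorbing $\|w_h\|_{H^1}^2$ into the jump and Laplacian terms after fixing $\gamma$ small. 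Thus the whole argument hinges on the matching construction in the non-conforming space.

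First I would integrate by parts elementwise. Since $u_h$ is conforming and $w_h$ is only broken, the usual computation produces, on every interior face, the two contributions $\{\nabla u_h\cdot n\}\llbracket w_h\rrbracket$ and $\llbracket\nabla u_h\cdot n\rrbracket\{w_h\}$. The decisive structural fact is that every member of $W_h=V^{k+1}_{h,\mathcal T}\oplus V^{k}_{h,\mathcal F_I}\oplus V^{k}_{nc,\mathcal F_I}$ has jumps orthogonal to $\mathbf P_{k-1}(F)$ on each $F\in\mathcal F_I$: the interior and conforming-face parts are continuous, and the Crouzeix--Raviart bubbles satisfy exactly this zero-moment property by (\ref{BKF}). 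Because $\{\nabla u_h\cdot n\}\in\mathbf P_{k-1}(F)$ for $u_h\in V_h^k$, the non-conforming consistency term $\int_F\{\nabla u_h\cdot n\}\llbracket w_h\rrbracket$ vanishes, and $a_h(u_h,w_h)$ collapses to the same expression $-\sum_K(\Delta u_h,w_h)_K+\sum_{F\in\mathcal F_I}\int_F\llbracket\nabla u_h\cdot n\rrbracket\{w_h\}$ as in the conforming setting.

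Next I would reconstruct the two ingredients. For the bulk term I take $w_1\in V^{k+1}_{h,\mathcal T}$ matching $h^2\Delta u_h\in\mathbf P_{k-2}(K)$ via \thref{match}(2) (in $d=2$, $k+1=(k-2)+d+1$); these bubbles vanish on $\partial K$, so they do not disturb the face terms, and $\|w_1\|_{H^1}\lesssim h\|\Delta_h u_h\|_\Omega$ by the inverse inequality, exactly as in (\ref{w1L2}). For the jump term I construct $w_2\in V^{k}_{h,\mathcal F_I}\oplus V^{k}_{nc,\mathcal F_I}$ whose \emph{average} $\{w_2\}|_F$ matches $h\llbracket\nabla u_h\cdot n\rrbracket$ against $\mathbf P_{k-1}(F)$ on each face. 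Here (\ref{BKF}) is used a second time, now to decouple the faces: a Crouzeix--Raviart bubble associated with a neighbouring face contributes to $\{w_2\}|_F$ but has vanishing moments against $\mathbf P_{k-1}(F)$, so only the bubbles associated with $F$ enter the local moment equations. On a single face the relevant trial traces are the constant $1$ (from $B^{CR}_{k,F}$, since $B^{CR}_{k,F}|_F=1$) together with the $k-1$ conforming edge bubbles $B^k_N|_F$ that vanish at both endpoints; these span a $k$-dimensional subspace of $\mathbf P_k(F)$, matching $\dim\mathbf P_{k-1}(F)$. The $L^2(F)$ pairing is non-degenerate precisely because the only element of $\mathbf P_k(F)$ orthogonal to $\mathbf P_{k-1}(F)$ is the Legendre polynomial $P_k$, which cannot lie in this span: it takes opposite endpoint values when $k$ is odd, whereas every function in the span takes equal endpoint values. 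This is where the hypothesis that $k$ is odd is essential. The resulting $w_2$ then satisfies $\sum_F\int_F\llbracket\nabla u_h\cdot n\rrbracket\{w_2\}=\mathcal J_h(u_h,u_h)$, and the bound $\|h^{-1}w_2\|_\Omega^2\lesssim\sum_F\|h^{1/2}\llbracket\nabla u_h\cdot n\rrbracket\|_F^2$ follows from the inverse trace inequality \thref{invtracecr} in place of \thref{invtrace}.

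With these two pieces in hand I would set $w_h=-\alpha w_1+\beta w_2$ and repeat the Cauchy--Schwarz and absorption argument of \thref{matchconforming} to obtain (\ref{upper})--(\ref{lower}); the Cauchy problem is identical once the boundary faces $\mathcal F_{\Gamma_0}$ and the bubbles $V^k_{nc,\mathcal F_{\Gamma_0}}$ are included to produce the $\|h^{1/2}\nabla_h u_h\cdot n\|_{\Gamma_0}^2$ term (on boundary faces there is no averaging, which only simplifies the matching). The main obstacle is the face moment-matching step: one must correctly account for the average $\{w_2\}$ rather than a one-sided trace, exploit (\ref{BKF}) to decouple the contributions of neighbouring non-conforming bubbles, and verify the local invertibility that forces the restriction to odd $k$. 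Everything else is a transcription of the conforming proof with \thref{invtracecr} substituted for \thref{invtrace}.
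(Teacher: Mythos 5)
Your proposal is correct and follows essentially the same route as the paper: reduce to constructing $w_1$ and $w_2$ as in \thref{match1}, perform the face-by-face moment matching in $V^{k}_{h,\{F\}}\oplus\Span\{B^{CR}_{k,F}\}$ (with invertibility resting on the endpoint-value argument that excludes the odd-degree Legendre polynomial), decouple neighbouring faces via the zero-moment property (\ref{BKF}), and close with the inverse trace inequality \thref{invtracecr} and the absorption argument of \thref{matchconforming} and \thref{infsupforfullconforming}. Your explicit treatment of the non-conforming consistency term $\int_F\{\nabla u_h\cdot n\}\llbracket w_h\rrbracket\,dS$ and of the average $\{w_2\}$ versus one-sided traces is a point the paper leaves implicit, but it is the same mechanism.
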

\begin{proof}
    To prove \thref{infsupnoncon}, we only need to prove there exists some function $w_2\in V^{k}_{h,\mathcal{F}_I}\oplus V_{nc,\mathcal{F}_I}^k$ (or $V^{k}_{h,\mathcal{F}_I}\oplus V^{k}_{h,\mathcal{F}_{\Gamma_0}}\oplus  V_{nc,\mathcal{F}_I}^k\oplus  V_{nc,\mathcal{F}_{\Gamma_0}}^k$), such that \thref{match1} holds (since the same $w_1$ can be chosen as in \thref{match1}). We will show here the proof when $w_2\in V^{k}_{h,\mathcal{F}_I}\oplus V_{nc,\mathcal{F}_I}^k$.
    
    First, consider $\forall F\in \mathcal{F}_I$. Let $V_F\subset V^{k}_{h,\mathcal{F}_I}$ be the collection of functions which are not identically $0$ on $F$, then 
    \begin{equation*}
        V_F = V^{k}_{h,\{F\}}\oplus \Span\{B^{CR}_{k,F'}: F'\subset\partial K,\forall K\in \mathcal{T}_F  \}.
    \end{equation*}
    Consider the restriction of $V^{k}_{h,\{F\}}\oplus \{B^{CR}_{k,F}\}$ on $F$. By the same argument as we derive (\ref{L2<F}) we obtain that these functions are linear independent along with the Legendre polynomial of order $k$, so its projection on $\mathbf{P}_{k-1}(F)$ is invertible. This gives: $\forall q\in \mathbf{P}_{k-1}(F)$, there exists some $p\in V^{k}_{h,\{F\}}\oplus \{B^{CR}_{k,F}\}$, such that
    \begin{equation}\label{wmatch}
        (p,q')_F = (q,q')_F, \forall q'\in \mathbf{P}_{k-1}(F).
    \end{equation}
    Now consider $\llbracket \nabla u_h\cdot n_F\rrbracket$. Since $u\in V_h^k$, $\llbracket \nabla u_h\cdot n_F\rrbracket|_F\in \mathbf{P}_{k-1}(F) $ for any $F$. We choose $w_2$ by choosing the coordinates in $V^{k}_{h,\{F\}}\oplus \{B^{CR}_{k,F}\}$ to guarantee (\ref{wmatch}). Note that in this way we have just used out all degree of freedoms in $\oplus_{F\in\mathcal{F}_I}V_F$. Then $\forall F\in \mathcal{F}_I$, $w_2|_F = p+p'$, where 
    \begin{equation*}
        \begin{array}{cc}
           p\in  p\in V^{k}_{h,\{F\}}\oplus \{B^{CR}_{k,F}\}, &  p'\in \Span\{B^{CR}_{k,F'}: F'\neq F, F'\subset\partial K,\forall K\in \mathcal{T}_F  \}.
        \end{array}
    \end{equation*}
Moreover, we have
\begin{equation*}
    (p,\llbracket \nabla u_h\cdot n_F\rrbracket)_F = ||h^\frac{1}{2}\llbracket \nabla u_h\cdot n_F\rrbracket||_F^2,
\end{equation*}
and by the second equality in (\ref{BKF})
\begin{equation*}
    (p',\llbracket \nabla u_h\cdot n_F\rrbracket)_F =0.
\end{equation*}
This has proven part (2) as in \thref{match1}. Part (3) in \thref{match1} can also be derived through a direct application of inverse inequality (\ref{inv}) and \thref{invtracecr} .

\end{proof}
Now we show \thref{consistency} is also satisfied by the schemes. We begin this by giving a set of global degree of freedoms for $W_h$ and $V_{1h}$ (cf. (\ref{nonconuc}), (\ref{nonconcp})).
\begin{prop}\thlabel{dofs}
    A set of degree of freedoms is given by taking equi-distance Lagrange nodes of $k+1$-th order in $K^o$ for all $K\in \mathcal{T}$ and taking $k$-th order Gauss-Legendre nodes on $F^o$ for all $F\in \mathcal{F}_I$ (or $\mathcal{F}_I\cup \mathcal{F}_{\Gamma_0}$).
\end{prop}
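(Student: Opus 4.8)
The plan is to show that the listed functionals form a unisolvent set for $W_h$ and $V_{1h}$: that their cardinality equals the dimension of the space and that the only element annihilated by all of them is the zero function. The counting is immediate from the direct sums (\ref{nonconuc}) and (\ref{nonconcp}). On each simplex $K$ the interior equidistant nodes of order $k+1$ are precisely the nodes dual to $V^{k+1}_{h,\{K\}}$, so they contribute exactly $\dim V^{k+1}_{h,\{K\}}$ functionals; on each $F\in\mathcal{F}_I$ (and $F\in\mathcal{F}_{\Gamma_0}$ for the Cauchy case) the $k$ Gauss--Legendre nodes match $\dim V^{k}_{h,\{F\}}+1=(k-1)+1$, the number of edge degrees of freedom carried by $V^{k}_{h,\mathcal{F}_I}\oplus V^{k}_{nc,\mathcal{F}_I}$ on that face. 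Once injectivity is established, unisolvence follows from this dimension equality.

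Before proving injectivity I would check that the edge functionals are well defined on the non-conforming space. Since the lifted interior bubbles of $V^{k+1}_{h,\mathcal{T}}$ vanish on $\partial K$ and the conforming edge functions are continuous, on every $F$ the trace $v_h|_F$ belongs to $\mathbf{P}_k(F)$ and the jump $\llbracket v_h\rrbracket|_F$ is generated only by the Crouzeix--Raviart contributions; hence it is $L^2(F)$-orthogonal to $\mathbf{P}_{k-1}(F)$ and is therefore a multiple of the Legendre polynomial $P_k$. As the $k$ Gauss--Legendre nodes are the roots of $P_k$, the jump vanishes there, so the point evaluations are single-valued. The same two facts---$B^{CR}_{k,F'}|_F\propto P_k$ for $F'\neq F$ sharing a triangle with $F$, by (\ref{BKF}), and $V^{k}_{h,\{F'\}}|_F=0$ for $F'\neq F$---give the decisive decoupling: at the Gauss--Legendre nodes of a fixed $F$ the value of $v_h$ reduces to that of $g_F:=e_F|_F+c_F$, where $e_F\in V^{k}_{h,\{F\}}$ and $c_F$ is the coefficient of $B^{CR}_{k,F}$ in the expansion of $v_h$.

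The heart of the argument is then one-dimensional. The polynomial $g_F\in\mathbf{P}_k(F)$ takes the common value $c_F$ at the two endpoints of $F$, because $e_F$ vanishes at the vertices, so it lies in the $k$-dimensional subspace of degree-$k$ polynomials with equal endpoint values. Vanishing at the $k$ Gauss--Legendre nodes forces $g_F=\alpha P_k$, and since $P_k(1)=1$ and $P_k(-1)=-1$ for the \emph{odd} $k$ assumed throughout Section~\ref{liftedcr}, the equal-endpoint constraint yields $\alpha=0$. Thus $e_F=0$ and $c_F=0$ on every face, whence $v_h$ collapses to a sum of interior bubbles in $V^{k+1}_{h,\mathcal{T}}$; the interior equidistant nodes, being dual to exactly this bubble space, then force $v_h=0$. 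For $V_{1h}$ the argument is identical once the faces of $\mathcal{F}_{\Gamma_0}$ are included, the only change being that on a boundary face the evaluation is taken from the single adjacent element and single-valuedness is automatic. The main obstacle is precisely this edge step: one must simultaneously exploit that the non-conforming jump is invisible to the Gauss--Legendre evaluations and that the odd parity of $P_k$ at the endpoints is what eliminates the otherwise-spurious kernel element, so that the odd-$k$ hypothesis enters in an essential way.
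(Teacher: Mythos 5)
Your argument is correct and follows essentially the same route as the paper's proof: match the dof count to the direct sum decomposition, restrict to each face where the non-conforming bubbles from neighbouring faces reduce to multiples of $P_k$, use that the Gauss--Legendre nodes are the roots of $P_k$ together with the odd-$k$ endpoint parity $P_k(\pm 1)=\pm 1$ to kill the face coefficients, and finish with the interior nodes. The only genuine addition is your explicit check that the face evaluations are single-valued because the jump is itself a multiple of $P_k$ vanishing at the Gauss--Legendre nodes --- a point the paper leaves implicit but which is worth recording.
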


\begin{proof}
    It is easy to verify that the dimensions of dofs given in \thref{dofs} coincide with the dimensions of (\ref{nonconuc}) or (\ref{nonconcp}). We only need to check the unisolvence. Taking $W_h$ as an example. Let $p\in W_h$, $K\in \mathcal{T}$ be any triangle, we have from (\ref{pFi}) that for each face $F_i\subset \partial K$
    \begin{equation*}
        p = \sum_{i=0}^2 \left(c_{F_i} B^{CR}_{k,F_i} +\sum_{j=1}^{k-1}c_{N_{j,F_i}}B^k_{N_{j,F_i}} + (c_{F_l}-c_{F_m})B^{CR}_{k,F_l}\right).
    \end{equation*}

This is because any function in $V^k_{h,\mathcal{T}}$ vanishes on any $F\in \mathcal{F}$. Now let $p=0$ on all Gauss-Legendre points on each $F_i$. This means $p|_{F_i}$ is a Legendre polynomial of order $k$, and thus a multiple of $B^{CR}_{k,F_l}|_{F_i}$. Since this works for all $F_i\subset \partial K$ such that $B^{CR}_{k,F_i}\in W_h$, we conclude $c_{F_i} = c_{N_{j,F_i}}=0$ for all such $F_i$. Moreover. $p=0$ at all interior nodes then gives the coefficient with respect to $V^k_{h,\mathcal{T}}$ is $0$, and thus $p\equiv 0$.

\end{proof}
Now we prove \thref{consistency} is also satisfied by the chosen spaces. First, we introduce some technical lemmas.
\begin{definition}
    Let $J_{h0}^{av}:X^k_h\to H_0^1(\Omega)$ be defined as in \cite[Section 6.2]{ern2017finite} with the degree of freedoms defined in \thref{dofs}, then there is some constant $C>0$, such that $\forall w_h\in W_h$,
    \begin{equation}\label{faceest}
        |w_h-J_{h0}^{av}w_h|_{H^m(\mathcal{T})}\le Ch^{\frac{1}{2}-m}\sum_{F\in \mathcal{F}}||\llbracket w_h\rrbracket||_F,
    \end{equation}
    where $m\in \{0:k+1\}$ and $\llbracket w_h \rrbracket $ is defined as $w_h$ when $F\in \mathcal{F}_{\partial \Omega}$.
\end{definition}
The details of the boundary prescription averaging operator can be found in \cite[Section 6.2]{ern2017finite} and the interpolation property we use is adapted from \cite[Lemma 6.2]{ern2017finite}.
\begin{lemma}
    Let $w_h\in W_h$, then there exists some constant $C>0$ such that $\forall K\in \mathcal{T}$ and $F\subset \partial K$,
    \begin{equation}\label{facepoincare}
        ||\llbracket w_h\rrbracket||_{F} \le C h^\frac{1}{2}||w_h||_{H^1(K)}.
    \end{equation}
    Moreover, for $m=0,1$,
    \begin{equation}\label{averageinterpo}
        |w_h-J_{h0}^{av}w_h|_{H^m(\mathcal{T})} \le C h^{1-m}||w_h||_{H^1(\mathcal{T})}.
    \end{equation}
\end{lemma}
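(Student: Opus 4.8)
The plan is to establish (\ref{facepoincare}) first, by exploiting the approximate continuity built into $W_h$, and then to obtain (\ref{averageinterpo}) as a routine consequence of (\ref{faceest}) combined with (\ref{facepoincare}).

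For (\ref{facepoincare}), the key observation is that $\llbracket w_h\rrbracket$ has vanishing mean on every face. Indeed, since $W_h\subset \mathbf{CR}_{k+1,0}$, on any interior face $F$ one has $\int_F\llbracket w_h\rrbracket q\, dS=0$ for all $q\in\mathbf{P}_k(F)$, while on a boundary face the prescription $\int_F w_hq\,dS=0$ plays the same role; testing against $q\equiv 1$ shows in particular that the jump is $L^2(F)$-orthogonal to constants. I would then write the jump across the face shared by $K$ and its neighbour $K'$ as the difference of the two one-sided oscillations $w_h|_K-\overline{w_h}^F_K$ and $w_h|_{K'}-\overline{w_h}^F_{K'}$, where $\overline{(\cdot)}^F$ denotes the average over $F$; the two constant parts cancel precisely because the mean of the jump is zero. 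On the reference element the functional $\hat v\mapsto ||\hat v-\overline{\hat v}^{\hat F}||_{\hat F}$ annihilates constants and is therefore bounded by $|\hat v|_{H^1(\hat K)}$ by equivalence of norms on the finite-dimensional polynomial space (a Bramble--Hilbert argument); transporting back by the standard affine scaling then yields $||w_h|_K-\overline{w_h}^F_K||_F\le Ch^{\frac12}|w_h|_{H^1(K)}$, and summing the two one-sided contributions gives (\ref{facepoincare}), the full $H^1$ norm trivially bounding the seminorm. For a boundary face only the single element $K$ enters, which is exactly the stated form, and this is all that is needed below.

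Given (\ref{facepoincare}), the estimate (\ref{averageinterpo}) follows by inserting the face bound into (\ref{faceest}): one obtains
\begin{equation*}
|w_h-J_{h0}^{av}w_h|_{H^m(\mathcal{T})}^2\le Ch^{1-2m}\sum_{F\in\mathcal{F}}||\llbracket w_h\rrbracket||_F^2\le Ch^{2-2m}\sum_{F\in\mathcal{F}}||w_h||_{H^1(K_F)}^2,
\end{equation*}
where $K_F$ is an element adjacent to $F$. Since each simplex has exactly $d+1$ facets and shape-regularity bounds the number of faces meeting it, the face-indexed sum is controlled by a fixed multiple of $\sum_{K}||w_h||_{H^1(K)}^2=||w_h||_{H^1(\mathcal{T})}^2$, and taking square roots gives $|w_h-J_{h0}^{av}w_h|_{H^m(\mathcal{T})}\le Ch^{1-m}||w_h||_{H^1(\mathcal{T})}$ for $m=0,1$.

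The main obstacle is the sharpness of the power $h^{\frac12}$ in (\ref{facepoincare}): a naive application of the discrete trace inequality (\ref{tc2}) only delivers the factor $h^{-\frac12}$, so the gain of a full power of $h$ must come entirely from the zero-mean structure of the jump, that is from the membership $W_h\subset\mathbf{CR}_{k+1,0}$, via the Poincaré/Bramble--Hilbert step. A secondary, purely bookkeeping point is the passage between the face sum in (\ref{faceest}) and the $\ell^2$ mesh seminorm on the left-hand side; I would handle this by using (\ref{faceest}) in its natural elementwise $\ell^2$ form, so that no spurious factor depending on the number of faces is introduced and the bounded-overlap argument above applies cleanly.
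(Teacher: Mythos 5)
Your proof is correct and follows essentially the same route as the paper: the vanishing integral of $\llbracket w_h\rrbracket$ on every face (a consequence of $W_h\subset\mathbf{CR}_{k+1,0}$) combined with a Poincar\'e--Steklov inequality on faces yields (\ref{facepoincare}), and inserting that bound into the averaging-operator estimate (\ref{faceest}) yields (\ref{averageinterpo}). You merely spell out what the paper delegates to citations --- the reference-element Bramble--Hilbert argument, the splitting of the two-sided jump into one-sided oscillations, and the need to use (\ref{faceest}) in its local $\ell^2$ form to avoid a factor of the number of faces --- and each of these points is handled correctly.
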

\begin{proof}
    (\ref{facepoincare}) is the Poincaré–Steklov on faces (see, for example \cite[Lemma 36.8]{ern2021finite2}) with observing that $\llbracket w_h\rrbracket$ has vanishing integral on all faces $F\in\mathcal{F}$. (\ref{averageinterpo}) is derived by using (\ref{faceest}) and then (\ref{facepoincare}).
    
\end{proof}
\begin{theorem}\thlabel{consistencynoncon}
     With the $W_h$ and $V_{1h}$ chosen in (\ref{nonconuc}) and (\ref{nonconcp}), then \thref{consistency} is satisfied.
\end{theorem}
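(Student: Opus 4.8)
The plan is to control the non-conforming consistency error $E_h(w_h):=a_h(u,w_h)-(f_h,w_h)_\Omega$ (respectively $E_h(w_h):=a_h(u,w_h)-(f,w_h)_\Omega-(\phi_h,w_h)_{\Gamma_0}$ for the Cauchy problem) by comparing $w_h$ with its conforming average $w_h^c:=J_{h0}^{av}w_h$. The first observation is that $w_h^c$ is a continuous piecewise polynomial of degree at most $k+1$ with homogeneous values on the appropriate part of $\partial\Omega$, hence $w_h^c$ lies in the conforming space $W_h^{k+1}=W_h^m$ (resp. $V_{1h}^{k+1}$) on which, by the defining relations (\ref{f_h}) and (\ref{phi_h}), the data interpolants $f_h$ and $\phi_h$ reproduce $f$ and $\phi$. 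Since $u$ solves the continuous weak problem and $w_h^c$ is conforming, $a_h(u,w_h^c)=a(u,w_h^c)$ equals the continuous right-hand side evaluated at $w_h^c$; therefore the conforming contribution cancels and one is reduced to $E_h(w_h)=a_h(u,w_h-w_h^c)-\ell_h(w_h-w_h^c)$, where $\ell_h$ is the discrete right-hand side functional.

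Next I would make the face structure explicit by integrating by parts element by element. For $u\in H^s(\Omega)$ with $2\le s\le k+1$ the normal derivative $\partial_n u$ is single-valued across interior faces, so regrouping the $\partial K$ contributions into a sum over faces yields
\begin{equation*}
E_h(w_h)=(f-f_h,w_h-w_h^c)_\Omega+\sum_{F\in\mathcal{F}_I}\int_F \partial_{n_F}u\,\llbracket w_h\rrbracket\,dS+\sum_{F\in\mathcal{F}_{\partial\Omega}}\int_F \partial_n u\,\llbracket w_h\rrbracket\,dS,
\end{equation*}
with the obvious modification for the Cauchy problem, where the volume term cancels exactly ($f$ being used without interpolation) and the boundary sum splits into a vanishing contribution on $\Gamma_1$ and a term $(\phi-\phi_h,w_h)_{\Gamma_0}$ on $\Gamma_0$. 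The decisive point is that, by the Crouzeix--Raviart moment conditions built into $W_h$ and $V_{1h}$ (cf. (\ref{BKF}) and the choice of degrees of freedom in \thref{dofs}), the jump $\llbracket w_h\rrbracket$ has vanishing moments against $\mathbf{P}_{k-1}(F)$ on every face. Thus on each $F$ I may subtract the $L^2(F)$-projection $\Pi_{k-1}\partial_{n_F}u$ for free and bound $\int_F\partial_{n_F}u\,\llbracket w_h\rrbracket$ by $\|\partial_{n_F}u-\Pi_{k-1}\partial_{n_F}u\|_F\,\|\llbracket w_h\rrbracket\|_F$. A local trace/approximation estimate gives $\|\partial_{n_F}u-\Pi_{k-1}\partial_{n_F}u\|_F\le Ch^{s-3/2}\|u\|_{H^s(\omega_F)}$ on the face patch $\omega_F$, while (\ref{facepoincare}) gives $\|\llbracket w_h\rrbracket\|_F\le Ch^{1/2}\|w_h\|_{H^1(K)}$; the product is $Ch^{s-1}$, and a Cauchy--Schwarz sum over faces produces the desired $Ch^{s-1}\|u\|_{H^s(\Omega)}\|w_h\|_{H^1(\mathcal{T})}$.

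For the volume mismatch I would use that $f_h$ is the $L^2$-projection of $f=\mp\Delta u\in H^{s-2}(\Omega)$ onto $W_h^{k+1}$, so $\|f-f_h\|_\Omega\le Ch^{s-2}\|u\|_{H^s(\Omega)}$, together with the averaging estimate (\ref{averageinterpo}) at $m=0$, namely $\|w_h-w_h^c\|_\Omega\le Ch\|w_h\|_{H^1(\mathcal{T})}$; their product is again $O(h^{s-1})$, and the orthogonality $(f-f_h,w_h^c)_\Omega=0$ justifies replacing $w_h$ by $w_h-w_h^c$. The $\Gamma_0$ term for the Cauchy problem is treated identically using the $L^2(\Gamma_0)$-optimality of $\phi_h$ and a trace inequality. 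The main obstacle I anticipate is the book-keeping at low regularity: both the element-wise integration by parts and the trace of $\partial_n u$ require $u\in H^2$, so the range $1\le s<2$ of \thref{consistency} must be recovered separately, either by density or by first replacing $u$ with a conforming interpolant $u_I$ and absorbing $a_h(u-u_I,w_h-w_h^c)$ through (\ref{averageinterpo}) and (\ref{faceest}). One must also check carefully that the range of $J_{h0}^{av}$ is precisely the conforming space $W_h^{k+1}$, so that the reproduction property of $f_h$ and $\phi_h$ may legitimately be invoked in the cancellation step.
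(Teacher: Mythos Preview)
Your argument is correct and is the classical second-Strang consistency analysis for Crouzeix--Raviart type spaces; it works cleanly for $s\ge 2$ and, as you correctly flag, the range $1\le s<2$ must be recovered by replacing $u$ by a conforming interpolant before integrating by parts. The paper adopts precisely that alternative from the outset: after the same reduction to $r_h=w_h-J_{h0}^{av}w_h$, it writes $a_h(u,r_h)=a_h(u-\pi_{sz}^ku,r_h)+a_h(\pi_{sz}^ku,r_h)$, bounds the first piece directly via (\ref{averageinterpo}), and integrates by parts only in the \emph{discrete} function $\pi_{sz}^ku$. The face contribution that survives is $\sum_{F\in\mathcal{F}_I}\int_F\{r_h\}\llbracket\nabla\pi_{sz}^ku\cdot n_F\rrbracket\,dS$ (the companion term $\int_F\{\nabla\pi_{sz}^ku\cdot n_F\}\llbracket r_h\rrbracket$ vanishes by the very $\mathbf{P}_{k-1}$-orthogonality of $\llbracket w_h\rrbracket$ that you exploit), and the remaining quantity $\|h(\Delta_h\pi_{sz}^ku-f_h)\|_\Omega+\mathcal{J}_h(\pi_{sz}^ku,\pi_{sz}^ku)^{1/2}$ is controlled via the residual bound of \cite[Proposition~3.2]{BL24}. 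The payoff of the paper's route is a single argument valid over the full range $s\ge 1$, with no separate density step and no need to define or estimate $\|f-f_h\|_\Omega$; the payoff of yours is that it is self-contained, avoids the external residual estimate, and makes the role of the moment condition on $\llbracket w_h\rrbracket$ completely explicit. Your concern about the range of $J_{h0}^{av}$ is easily settled: the conforming part of $W_h$ is $V_{h,\mathcal{T}}^{k+1}\oplus V_{h,\mathcal{F}_I}^k\subset W_h^{k+1}$, so the cancellation via (\ref{f_h}) is legitimate.
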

\begin{proof}
We prove the theorem for the unique continuation problem. Let $w_h\in W_h$. We have $a(u,J_{h0}^{av}w_h)-(f_h,J_{h0}^{av}w_h)_\Omega$ by the definition of $f_h$ (cf. (\ref{f_h})). Let $r_h = w_h-J_{h0}^{av}w_h$ and $k\ge s-1$, we have
\begin{equation}
\begin{aligned}
    a(u,w_h)-(f_h,w_h)_\Omega &= a(u,r_h)-(f_h,r_h)_\Omega \\
    & = a(u-\pi^k_{sz}u,r_h)+a(\pi^k_{sz}u,r_h)-(f_h,r_h)_\Omega \\
    &\le Ch^{s-1}||u||_{H^s(\Omega)}||w_h||_{H^1(\mathcal{T})}+a(\pi^k_{sz}u,r_h)-(f_h,r_h)_\Omega,
    \end{aligned}
\end{equation}
where we use the optimal interpolation error for the Scott-Zhang interpolant and (\ref{averageinterpo}) in the last inequality. Now we only need to bound $a(\pi^k_{sz}u,r_h)-(f_h,r_h)_\Omega$. We have by integration by parts
\begin{equation*}
    \begin{aligned}
a(\pi^k_{sz}u,r_h)-(f_h,r_h)_\Omega = &\sum_{K\in\mathcal{T}}\int_K-r_h\Delta \pi^k_{sz}udx - (f_h,r_h)_\Omega\\
&+\sum_{F\in\mathcal{F}_I}\int_F\{r_h\}\llbracket \nabla \pi^k_{sz}u\cdot n_F\rrbracket dS \\
&\lesssim (||h(\Delta_h\pi^k_{sz}u-f_h)||_\Omega+\mathcal{J}_h(\pi^k_{sz}u,\pi^k_{sz}u))||w_h||_{H^1(\mathcal{T})},
    \end{aligned}
\end{equation*}
where trace inequality (\ref{tc2}) and interpolation estimate (\ref{averageinterpo}) are applied for the face quantity. Note the face integrals are counted only in the interior since for $\llbracket \nabla \pi^k_{sz}u\cdot n_F\rrbracket \in \mathbf{P}_{k-1}(F)$ on any $F\in \mathcal{F}$ and $\int_{\partial\Omega}w_h\llbracket \nabla \pi^k_{sz}u\cdot n_F\rrbracket dS=0$. We conclude by using \cite[Proposition 3.2] {BL24} that 
\begin{equation*}
    ||h(\Delta_h\pi^k_{sz}u-f_h)||_\Omega+\mathcal{J}_h(\pi^k_{sz}u,\pi^k_{sz}u)\le Ch^{s-1}||u||_{H^s(\Omega)}.
\end{equation*}

The proof for the Cauchy problem is similar but requires modifications of the average operator to match the boundary condition satisfied by $V_1$.

\end{proof}

\bibliography{bibliography}

\end{document}